\DeclareMathOperator{\sv}{s}
\DeclareMathOperator{\Imm}{Im}
\renewcommand{\Im}{{\Imm}}
\theoremstyle{definition}
\newtheorem{defi}{Definition}[section]
\theoremstyle{definition}
\newtheorem{rmk}[defi]{Remark}
\theoremstyle{definition}
\newtheorem{ex}[defi]{Example}
\theoremstyle{plain}
\newtheorem{thm}{Theorem}[section]
\newtheorem{prop}[thm]{Proposition}
\newtheorem{lemma}[thm]{Lemma}
\newtheorem{cor}[thm]{Corollary}
\newtheorem*{thm*}{Theorem}
\newcommand{\fm}{\mathfrak{m}}
\newcommand{\fdr}{\mathfrak{dr}}
\newcommand{\Q}{\mathbb{Q}}
\newcommand{\R}{\mathbb{R}}
\newcommand{\Pp}{\mathbb{P}}
\newcommand{\E}{\mathcal{E}}
\newcommand{\Sp}{\operatorname{Spec}}
\newcommand{\Hom}{\operatorname{Hom}}
\newcommand{\Ext}{\operatorname{Ext}}
\newcommand{\drY}{\textrm{\sffamily{\textup{Y}}}}
\newcommand{\drX}{\textrm{\sffamily{\textup{X}}}}
\newcommand{\ZZ}{\mathbb{Z}}
\newcommand{\QQ}{\mathbb{Q}}
\newcommand{\CC}{\mathbb{C}}
\newcommand{\dR}{\mathrm{dR}}
\newcommand{\Beta}{\mathrm{B}}
\title{\textbf{The cosmic Galois group, the sunrise Feynman integral, and the relative completion of $\Gamma_1(6)$}}
\author{Matija Tapu\v{s}kovi\'{c}}
\date{}
\begin{document}
\maketitle
\begin{abstract}

 In the first part of this paper we study the coaction dual to the action of the cosmic Galois group on the motivic lift of the sunrise Feynman integral with generic masses and momenta, and we express its conjugates in terms of motivic lifts of Feynman integrals associated to related Feynman graphs. Only one of the conjugates of the motivic lift of the sunrise, other than itself, can be expressed in terms of motivic lifts of Feynman integrals of subquotient graphs. To relate the remaining conjugates to Feynman integrals we introduce a general tool: subdiving edges of a graph. We show that all motivic lifts of Feynman integrals associated to graphs obtained by subdividing edges from a graph $G$ are motivic periods of $G$ itself. This was conjectured by Brown in the case of graphs with no kinematic dependence. We also look at the single-valued periods associated to the functions on the motivic Galois group, i.e. the `de Rham periods', which appear in the coaction on the sunrise, and show that they are generalisations of Brown's non-holomorphic modular forms with two weights. In the second part of the paper we consider the relative completion of the torsor of paths on a modular curve and its periods, the theory of which is due to Brown and Hain. Brown studied the motivic periods of the relative completion of $\mathcal{M}_{1,1}$ with respect to the tangential base-point at infinity, and we generalise this to the case of the torsor of paths on any modular curve. We apply this to reprove the claim that the sunrise Feynman integral in the equal-mass case can be expressed in terms of Eichler integrals, periods of the underlying elliptic curve defined by one of the associated graph hypersurfaces, and powers of $2\pi i$.
    
\end{abstract}
\begin{section}{Introduction}
\subsection{Feynman integrals}
Let $G=(V_G,E_G,E_G^{ext})$ be a connected Feynman graph, where $V_G$ are vertices, $E_G$ are edges, and $E_G^{ext}$ are external half-edges, otherwise known as legs. To each internal edge $e \in E_G$ we assign its particle mass $m_e \in \R$. To each external edge $i \in E_G^{ext}$ we assign a momentum, which is a vector $q_i \in \R^d$ where $d \in \ZZ_{>0}$ is the dimension of space-time. A condition on momenta $\sum_{i \in E_G^{ext}} q_i = 0$, called momentum conservation, is assumed. Associate to each internal edge $e \in E_G$ a variable $\alpha_e$. The first Symanzik polynomial is defined to be
\begin{equation}
\label{first-symanzik}    
\Psi_G = \sum\limits_{T\subset G} \prod\limits_{e \not \in T} \alpha_e
\end{equation}
where the sum is over all spanning trees $T$ of the graph $G$. The second Symanzik polynomial is defined to be:
\begin{equation}
\label{second-Symanzik}
\Xi_G(m,q) = \sum\limits_{T_1 \cup T_2 \subset G} (q^{T_1})^2 \prod\limits_{e \not \in T_1 \cup T_2} \alpha_e + \left(\sum\limits_{e \in E_G} m_e^2\alpha_e\right)\Psi_G.
\end{equation}
where the first sum ranges over all spanning 2-trees $T = T_1 \cup T_2$ of $G$\footnote{A spanning 2-tree of a graph $G$ is a subgraph with 2 connected components, each of which is a tree.}. Here $q^{T_1}= \sum_{i \in E_{T_1}^{ext}} q_i$ is the sum of all incoming momenta entering $T_1$. By momentum conservation $q^{T_1}=-q^{T_2}$. The notation $q^2$ refers to the Euclidean scalar product in $\mathbb{R}^d$. Let $N_G$ be the number of edges of $G$, $F$ the number of non-trivial external momenta, and $h_G$ the number of loops. Let $q=\{q_1,...,q_{F}\}$ and $m=\{m_1,...,m_{N_G}\}$. We consider integrals of the form
\begin{equation}
\label{Feynman integral defi}
I_G(m,q) = \int_{\sigma} \omega_G(m,q) ,
\end{equation}
where 
\begin{equation}
\label{Feynman integrand defi}
\omega_G(m,q) = \frac{1}{\Psi^{d/2}_G}\left(\frac{\Psi_G}{\Xi_G}\right)^{N_G - h_G d/2} \Omega_G,
\end{equation}
and
\begin{equation}
\label{omega-def}
\Omega_G = \sum\limits_{i=1}^{N_G} (-1)^i\alpha_id\alpha_1 \wedge \ldots \wedge \widehat{d\alpha_i}\wedge \ldots \wedge d\alpha_{N_G}.
\end{equation}
The domain of integration is $\sigma = \{ \left[\alpha_1:\ldots : \alpha_{N_G}\right] : \alpha_i \geq 0 \} \subset \Pp^{N_G - 1}(\mathbb{R})$. If the Feynman integral associated to the graph $G$ converges the integral \eqref{Feynman integral defi} is that Feynman integral up to a Gamma value factor. We only treat the convergent case in this paper, leaving regularization for future work.  

There has been a significant amount of interest in the algebraic structure of Feynman integrals evaluating to multiple elliptic polylogarithms and iterated integrals of modular forms in recent years \cite{ABDGM,FH}, with a view towards understanding the structure of multi-loop Feynman integrals. In this paper we will be particularly interested in the case of the sunrise graph -- see figure \ref{sunrise_graph}. It is known that the Feynman integral associated to the sunrise in $d=2$ dimensions of space-time can be expressed in terms of elliptic dilogarithms and integrals of modular forms \cite{BKV2,BV}.

\begin{figure}[h]
\centering
\begin{tikzpicture}
\SetGraphUnit{3}
  
  \SetUpEdge[lw = 1pt,
  color      = black,
  labelcolor = white,
  labelstyle = {sloped,above,yshift=2pt}]
  
  \SetUpVertex[FillColor=black, MinSize=8pt, NoLabel]

  \Vertex[x=2,y=0]{1}
  \Vertex[x=6,y=0]{2}
  \Vertex[x=0,y=0,empty=true]{4}
  \Vertex[x=8,y=0,empty=true]{5}

  \Edge[label=2,style = {double}](1)(2)
  \tikzset{EdgeStyle/.append style = {bend left=60}}
  \Edge[label=1,style = {double}](1)(2)
  \Edge[label=3,style = {double}](2)(1)
  \tikzset{EdgeStyle/.style={postaction=decorate,decoration={markings,mark=at position 0.7 with {\arrow{latex}}}}}
  \Edge[label=$-q_1$](4)(1)
  \Edge[label=$q_1$](5)(2)
\end{tikzpicture}
\caption{The sunrise Feynman graph.}
\label{sunrise_graph}
\end{figure}
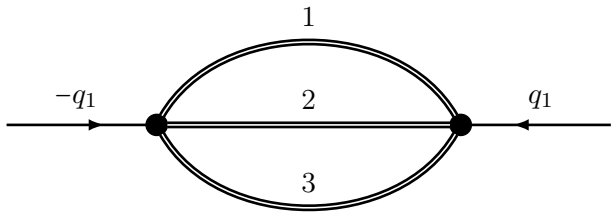

\subsection{Motivic Galois (co)action}
When it converges \eqref{Feynman integral defi} is a family of periods which conjecturally carry an action of the motivic Galois group \cite{KZ,Y}. In order to sidestep difficult conjectures regarding motives we will work in a Tannakian category of realisations $\mathcal{H}(S)$ consisting of triples $\mathcal{V} = (\mathbb{V}_{\Beta},\mathcal{V}_{\dR},c)$, which are typically given by Betti and algebraic de Rham cohomology of a family of algebraic varieties over a base $S$, together with a comparison isomorphism between them (see \cite[\S7.2]{Brown1} for details). Any reasonable category of motives admits a functor to the category of realisations, and any family of periods can be lifted to a Betti-de Rham matrix coefficient\footnote{A matrix coefficient is an element of the ring of functions on the scheme $Isom^{\otimes}_{\mathcal{H}(S)}(\omega_{\dR,Y},\omega_{\Beta,X})$, where $X \subset S(\CC)$ is a simply-connected region of $S(\CC)$, and $Y \subset S(\CC)$ is a region such that $Y \subset U(\CC)$ and $U \subset S$ affine. The functors $\omega_{\dR,Y},\omega_{\Beta,X}$  send a triple $\mathcal{V}$ to the sections of $\mathcal{V}_{\dR}$ over $Y$ and the sections of $\mathbb{V}_{\Beta}$ over $X$ respectively.} in $\mathcal{H}(S)$, i.e. to an equivalence class $\left[\mathcal{V},[\gamma],[\omega]\right]^{\fm}$ where $\mathcal{V}$ is an object of $\mathcal{H}(S)$, $[\gamma]$ is a section of the local system $\mathbb{V}_{\Beta}^{\vee}$ on some region of $S(\CC)$, and $[\omega]$ is a section of the vector bundle $\mathcal{V}_{\dR}$. They span the ring of Betti-de Rham matrix coefficients of $\mathcal{H}(S)$, denoted $\mathcal{P}^{\fm}_{\mathcal{H}(S)}$, and otherwise referred to as `motivic periods'. This ring is equipped with the period homomorphism $\textrm{per}:\mathcal{P}^{\fm}_{\mathcal{H}(S)} \rightarrow M(S(\CC))$ which sends a matrix coefficient to a multi-valued meromorphic function on $S(\CC)$, with the choice of branch corresponding to $[\gamma]$. In the case when $\mathcal{V}$ is  given by the cohomology of a family of varieties the period homomorphism is given by integration. For details see \cite[\S 7.5]{Brown2}.
\begin{ex}
\label{example log lefschetz}
Consider the object $ V = (H^1_{\Beta}(\mathbb{G}_m,\{1,x\}),H^1_{\dR}(\mathbb{G}_m,\{1,x\}),c)$ in $\mathcal{H}(\Sp(\QQ))$, for $x \in \mathbb{G}_m(\QQ)$. We define the motivic logarithm as the matrix coefficient
$$
\log^{\mathfrak{m}}(x) = \left[V,[\sigma_x],\left[\frac{dt}{t}\right]\right]^{\mathfrak{m}} \in \mathcal{P}^{\mathfrak{m}}_{\mathcal{H}(\Sp(\QQ))},
$$
where $\sigma_x$ is a path from $1$ to $x$ not winding around the origin. Its period is 
$$
\textrm{per}(\log^{\mathfrak{m}}(x)) = \int_{\sigma_x}\frac{dt}{t} = \log(x),
$$

To define the motivic lift of $2\pi i$ let $H=(H^1_{\textrm{B}}(\mathbb{G}_m),H^1_{\mathrm{dR}}(\mathbb{G}_m),c)$ be an object in $\mathcal{H}(\Sp(\QQ))$, and let
$$
\mathbb{L}^\mathfrak{m} = \left[H,[\gamma_0],\left[\frac{dt}{t}\right]\right]^{\mathfrak{m}} \in \mathcal{P}^{\mathfrak{m}}_{\mathcal{H}(\Sp(\QQ))},
$$
where $\gamma_0$ is a closed path winding once around the origin.
\end{ex}

A natural variant of motivic periods are de Rham-de Rham matrix coefficients, referred to as `de Rham periods', of the form $[\mathcal{V},[\eta]^{\vee},[\omega]]^{\dR}$, where $[\eta]^{\vee}$ is a section of $\mathcal{V}_{\dR}^{\vee}$, and $[\omega]$ is a section of $\mathcal{V}_{\dR}$ as before. De Rham periods span a Hopf algebra $\mathcal{P}^{\dR}_{\mathcal{H}(S)}$\footnote{These are elements of the Hopf algebra of functions on the group scheme $Aut^{\otimes}_{\mathcal{H}(S)}(\omega_{\dR,X})$.} which coacts on the ring of motivic periods:
\begin{equation} 
\label{general-coaction}
\Delta [\mathcal{V},[\gamma],[\omega]]^{\mathfrak{m}} = \sum\limits_{e_i} [\mathcal{V},[\gamma],[e_i]]^{\mathfrak{m}} \otimes [\mathcal{V},[e_i]^{\vee},[\omega]]^{\mathfrak{dr}}
\end{equation}
where $\{[e_i]\}$ is a basis of sections of $\mathcal{V}_{\dR}$ and $\{[e_i]^{\vee}\}$ is the dual basis. This coaction is dual to the action of the Tannaka group associated to a fiber functor of $\mathcal{H}(S)$ and denoted $G^{\dR}_{\mathcal{H}(S)}$, which plays the role of the motivic Galois group in our context. 
\begin{ex}
\label{example-motivic-dr-logs}
 We define the de Rham counterparts of the logarithm and $2\pi i$ by
$$\log^{\mathfrak{dr}}(x) = \left[V,\left[\frac{dt}{x-1}\right]^{\vee}, \left[\frac{dt}{t}\right]\right]^{\mathfrak{dr}}, \text{ and }
\mathbb{L}^\mathfrak{dr}= \left[H,\left[\frac{dt}{t}\right]^{\vee},\left[\frac{dt}{t}\right]\right]^{\mathfrak{dr}}
$$
respectively\footnote{We can see $\left[\frac{dt}{x-1}\right]^{\vee}$ and $\left[\frac{dt}{t}\right]^{\vee}$ as de Rham versions of paths from $1$ to $x$ and the loop around the origin respectively via a certain natural homomorphism called the de Rham projection -- for details see \cite[\S 6.3]{BD}.}. Applying the general formula for the coaction to the motivic logarithm we obtain
$$
\Delta \log^{\fm}(x) = \log^{\fm}(x) \otimes \mathbb{L}^{\fdr} + 1 \otimes \log^{\fdr}(x).
$$
\end{ex}

\subsubsection{Motivic Feynman amplitudes}
\label{motivic Feynman amplitude definition}
In the setting of $\mathcal{H}(S)$ we may lift \eqref{Feynman integral defi} to motivic periods by defining
\begin{equation}
    I^{\fm}_{G}(m,q) = \left[mot_G,[\sigma_G],[\pi_G^*(\omega_G(m,q))]\right]^{\fm},
\end{equation}
where $S$ is a Zariski open in the space of generic kinematics, $mot_G$ is an object of $\mathcal{H}(S)$ canonically associated to $G$. It is given by the cohomology of the pair $(P^G\setminus X_G, D)$, where $\pi_G: P^G \rightarrow P^{N_G-1}$ is an iterated blow-up along linear subspaces in $\Pp^{N_G-1}$ indexed by certain sub-graphs of $G$, $X_G$ is the strict transform of the union of hypersurfaces defined by $\Xi_G$ and $\Psi_G$, and $D$ is the divisor given by the complement of $X_G$ in the total transform of $\cup_{i=1}^{N_G} V(\alpha_i) \subset \Pp^{N_G-1}$. The section $[\sigma_G]$ is a constant section over a certain region in $S(\CC)$ given by the strict transform of the cycle $\sigma$ in \eqref{Feynman integral defi}. We refer to $mot_G$ as the `graph motive of $G$' (see \cite{Brown2} for details). The motivic period $I^{\fm}_{G}(m,q)$ is referred to as the `motivic Feynman amplitude' and the period homomorphism recovers the Feynman integral \eqref{Feynman integral defi} from it. The subring of $\mathcal{P}^{\fm}_{\mathcal{H}(S)}$ spanned by motivic periods of graph motives carries an action of a subquotient group of $G^{\dR}_{\mathcal{H}(S)}$ referred to as the `cosmic Galois group'. The upshot of studying the corresponding coaction on $I^{\fm}_G(m,q)$ is that we may obtain strong and concrete constraints on Feynman integrals.  In particular, we expect its motivic Galois conjugates of low weight\footnote{The weight filtration comes from the mixed Hodge structure on the cohomology groups associated to Feynman graphs.}  to be motivic periods of motives of subquotient graphs with the number of edges of those graphs controlled by the weight of the conjugates. This way, easy results for small graphs can provide strong constraints on Feynman integrals to all loop orders. This is referred to as \textit{the small graphs principle} \cite[8.4, 9.3]{Brown2}, and it inspired a significant amount of work as well as surprising conjectures \cite{ABDG,ABDGM,FH,PS,S}. Note that some of these results do not compute the motivic Galois coaction directly, but should be compatible with it. It would be interesting to explore these connections further. In this paper we focus on the coaction on $I^{\fm}_{G}(m,q)$ where $G$ is the sunrise graph with generic kinematics.

\subsection{de Rham elliptic integrals and their single-valued periods}
The second Symanzik polynomial of the sunrise graph gives rise to an elliptic curve defined by its vanishing locus, and the associated graph motive can be shown to be an (iterated) extension of the cohomology of this elliptic curve. We will therefore first need to consider the motivic and de Rham periods arising from the cohomology of a family of elliptic curves and the cohomology of a family of elliptic curves relative to points on that curve. In particular, we will introduce de Rham versions of complete and incomplete elliptic integrals. Let $H^1(\mathcal{E})$ denote the object of $\mathcal{H}(S)$ given by the Betti and algebraic de Rham cohomology of a family of elliptic curves $\mathcal{E} \rightarrow S$. Choose a basis of sections of $H^1_{\dR}(\mathcal{E})$, over some open in $S$, and denote it $\{\omega,\eta\}$, where $\omega,\eta$ are differential forms of the first kind and second kind respectively. We define the following de Rham periods
$$
 K_1^{\fdr} = \left[H^1(\mathcal{E}), [\omega]^{\vee} ,[\omega] \right]^{\fdr}, \quad K_{2,\eta}^{\fdr} = \left[H^1(\mathcal{E}), [\eta]^{\vee} ,[\omega] \right]^{\fdr}.
$$
In the above notation we suppress the dependence on parameters unless explicitly required. Note that $K_1^{\fdr}$ is canonically defined for an elliptic curve $\mathcal{E}$ since the the form of the first kind is well defined, but for $K_{2,\eta}^{\fdr}$ we have a choice of the differential form of the second kind, since we can always add to it a multiple of the form of the first kind. De Rham periods are not equipped with a period homomorphism, but we can consider their single-valued periods instead. This is a construction which assigns a single-valued function of the parameters to a de Rham period -- for more on this topic see \cite{BD}. For the construction of the single-valued period homomorphism for families of de Rham periods see Appendix \ref{single-valued-periods definition}. Applying the single-valued period homomorphism, denoted $\mathrm{s}$, to $K_1^{\fdr}, K_{2,\eta}^{\fdr}$ for the universal elliptic curve $\mathcal{E} \rightarrow \mathcal{M}_{1,1}$ depending on $\tau = \frac{\omega_1}{\omega_2}$, where $\omega_i$ are the two periods of the elliptic curve, yields examples of Brown's `non-holomorphic modular forms' -- see \cite{Brown4} and sequels. These functions transform as
$$
f_k(\gamma \tau) = (c\tau + d)^{-k}(c\overline{\tau} + d)^{k}f_k(\tau),
$$
where $\mathrm{s}(K_1^{\fdr}(\tau)) = f_1(\tau)$,
and $\mathrm{s}(K_{2,\eta}^{\fdr}(\tau)) = f_{-1}(\tau)$, for $\gamma = \begin{pmatrix}
a & b \\
c & d
\end{pmatrix} \in \textrm{SL}_2(\mathbb{Z})$.

We extend this to incomplete de Rham elliptic integrals by considering $\mathcal{E}$ with two points $(\mathcal{E},\{P,Q\})\rightarrow S$. We choose a basis of sections of $H^1_{\dR}(\mathcal{E},\{P,Q\})$ given by $\{\omega,\eta,df\}$, where $f$ is a meromorphic function on $\mathcal{E}$ such that $f(P) - f(Q) = 1$. Consider the de Rham periods
$$
F_{P,Q}^{\fdr} = \left[H^1(\mathcal{E},\{P,Q\}), [df]^{\vee} ,[\omega] \right]^{\fdr}, F_{P,Q,\eta}^{\fdr} = \left[H^1(\mathcal{E},\{P,Q\}), [df]^{\vee} ,[\omega] \right]^{\fdr}
$$
Define $f_{3,-1}(\tau,z_1,z_2) = \sv\left(F^{\fdr}_{P,Q}(\tau)\right)$ and $f_{3,1}(\tau,z_1,z_2) = \sv\left(E^{\fdr}_{P,Q,\eta}(\tau)\right)$. Then the following holds.
\begin{prop}
 Let $z_1,z_2$ be the preimages of $P,Q$ under $\CC/\Lambda_{\tau} \cong \mathcal{E}_{\tau}(\CC)$ for $\Lambda_{\tau}$ the lattice spanned by the periods of $\mathcal{E}_{\tau}$, and $\gamma = \begin{pmatrix}
a & b \\
c & d
\end{pmatrix} \in \textrm{SL}_2(\mathbb{Z})$. Then
\begin{equation}
\begin{split}
    & f_{3,k}\left(\frac{a\tau + b}{c\tau + d},\frac{z_1}{c\tau + d},\frac{z_2}{c\tau + d}\right) = (c\tau + d)^{k}f_{3,1}(\tau,z_1,z_2).
\end{split}
\end{equation}
and they have expansions of the form
$$
f(\tau,z_1,z_2) = \sum_{-N \leq k_1,k_2,k_3 \leq N} \sum_{\substack{m_i,n_i \geq 0 \\ 1 \leq i \leq 3}}a_{m,n}^{(k_1,k_2,k_3)}z_1^{k_1}z_2^{k_2}(-2\pi\Imm{\tau})^{k_3}q_{i}^{m_i}\overline{q_i}^{n_i},
$$
where $q_1 = e^{2\pi i \tau},q_2 = e^{2\pi i z_1}, q_3 = e^{2\pi i z_2}$, and $z_1,z_2$ are in a compact set not containing any points of the lattice generated by $1$ and $\tau$.
\end{prop}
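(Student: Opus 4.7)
The plan is to derive the transformation formula from the naturality of de Rham periods under isomorphisms in $\mathcal{H}(S)$, and to obtain the expansion from known $q$-expansions of entries of the period matrix of the universal pointed elliptic curve together with the formula for $\sv$ on families.

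For $\gamma = \begin{pmatrix} a & b \\ c & d \end{pmatrix} \in \mathrm{SL}_2(\mathbb{Z})$, the map $z \mapsto z/(c\tau+d)$ is an isomorphism $\phi_\gamma: \mathcal{E}_\tau \xrightarrow{\sim} \mathcal{E}_{\gamma\tau}$ of elliptic curves sending the marked points with preimages $z_1, z_2$ to those with preimages $z_1/(c\tau+d), z_2/(c\tau+d)$. First I would track the action of $\phi_\gamma^*$ on the chosen basis of $H^1_{\dR}(\mathcal{E},\{P,Q\})$: one has $\phi_\gamma^*[\omega] = (c\tau+d)^{-1}[\omega]$ since $\omega = dz$; $\phi_\gamma^*[\eta] = (c\tau+d)[\eta]$, forced by compatibility with Legendre's relation; and $\phi_\gamma^*[df] = [df]$ since the normalization $f(P) - f(Q) = 1$ is intrinsic to the pair of marked points. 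Naturality of de Rham matrix coefficients under isomorphisms in $\mathcal{H}(S)$ then yields
\[
F^{\fdr}_{P,Q}\bigl(\gamma\tau, \tfrac{z_1}{c\tau+d}, \tfrac{z_2}{c\tau+d}\bigr) = (c\tau+d)^{-1} F^{\fdr}_{P,Q}(\tau, z_1, z_2),
\]
and likewise with weight $+1$ for $F^{\fdr}_{P,Q,\eta}$ using $[\eta]$ on the right. Since the dual factor $[df]^\vee$ is modular-invariant, no additional antiholomorphic $(c\overline{\tau}+d)$ factor is introduced when $\sv$ is applied, in contrast to the purely complete case $K_1^{\fdr}$ where $[\omega]^\vee$ on the left contributes an extra $(c\overline{\tau}+d)$. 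Hence $f_{3,k}$ transforms as stated.

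For the expansion I would appeal to the formula for $\sv$ on families from the Appendix, which expresses it as a specific bilinear combination of entries of a Betti-de Rham matrix of $H^1_{\dR}(\mathcal{E},\{P,Q\})$ with their complex conjugates. The entries of this matrix are classical periods and quasi-periods of $\mathcal{E}$ together with incomplete elliptic integrals of $\omega$ and $\eta$ from a chosen basepoint to $P$ and $Q$. Their $q$-expansions are standard: the complete periods expand in $q_1 = e^{2\pi i \tau}$ possibly with an affine $\tau$ term (from the second period of $\omega$ and the quasi-period of $\eta$); the incomplete integrals expand as power series in $q_1, q_2 = e^{2\pi i z_1}, q_3 = e^{2\pi i z_2}$ plus linear terms in $z_1, z_2$ from the simple poles of $df$, via the Kronecker--Eisenstein/Brown--Levin theory of elliptic polylogarithms. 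Multiplying such expansions by their complex conjugates and rewriting imaginary parts of $\tau$-linear contributions as $-2\pi\mathrm{Im}\,\tau$ produces a series of the claimed shape in $q_i, \overline{q_i}, z_1, z_2, -2\pi\mathrm{Im}\,\tau$, with bounded exponents because only finitely many logarithmic periods (of bounded unipotent degree) appear in the $3 \times 3$ period matrix.

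The main obstacle is the second part: assembling the precise bilinear combination of period-matrix entries that gives $\sv(F^{\fdr}_{P,Q})$, and verifying that after taking products with complex conjugates all transcendental pieces outside pure $q_i, \overline{q_i}$ power series collapse into the three linear variables with bounded exponents. Convergence on compact subsets of $z_1, z_2$ away from the lattice then reduces to convergence of the Kronecker--Eisenstein series defining the elliptic polylogarithms, and the transformation law already derived provides a consistency check.
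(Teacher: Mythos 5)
Your route to the transformation law is genuinely different from the paper's. The paper simply computes the matrix $(\overline{P_{\mathcal{E},P,Q}})^{-1}P_{\mathcal{E},P,Q}$ from the explicit $3\times 3$ period matrix \eqref{period-matrix-1}, obtains the two displayed formulae for $\sv(F^{\fdr}_{P,Q})$ and $\sv(E^{\fdr}_{P,Q,\eta})$, and reads off the modular weight from the known transformations of the Weierstrass $\zeta$-function, of $z_1,z_2$, and of $f_{1,1},\ldots,f_{2,2}$; whereas you invoke naturality of de Rham matrix coefficients under $\phi_\gamma$ and track the weights of $[\omega],[\eta],[df]$. Your observation that no antiholomorphic $(c\overline{\tau}+d)$ factor appears because $[df]^\vee$ is modular-invariant is the right mechanism, but it is asserted rather than derived: to make it rigorous one should note that rescaling the de Rham basis by $A=\mathrm{diag}((c\tau+d)^{-1},(c\tau+d),1)$ turns $\overline{P}^{-1}P$ into $\overline{A}^{-1}(\overline{P}^{-1}P)A$, so the $([df],[\omega])$-entry picks up exactly $\overline{A}_{33}^{-1}A_{11}=(c\tau+d)^{-1}$. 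The paper's explicit-matrix computation avoids this bookkeeping and, importantly, also supplies the bilinear formula you flag as the main obstacle for your second part. For the expansion you and the paper proceed along the same lines, but the required ingredient is simply the classical Lang $q$-expansion of the Weierstrass $\zeta$-function (with the linear $\eta_2 z$ term accounting for the bounded $z_1^{k_1}z_2^{k_2}$ powers), not the heavier Brown--Levin / Kronecker--Eisenstein apparatus you cite.
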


\subsection{Subdivision of edges}
Note that the small graphs principle does not imply that we can write all conjugates of a motivic Feynman amplitude as motivic Feynman amplitudes of other graphs. We will see in particular that in the case of the sunrise graph only one of the conjugates, other than the sunrise itself, can be expressed in terms of its subquotient graphs\footnote{As another example see the coaction for the three-edge one-loop graph with vanishing internal masses and contrast this with, for example, the four-edge one-loop graph in \cite{MatijaT}.}. To address this we will introduce a general tool, applicable to any connected graph $G$, in \S\ref{section: subdivison of edges} as follows. Consider a connected Feynman graph $G$, and let $G_{s(e)}$ be the graph $G$ with an edge $e$ replaced by two edges connected by a vertex, where the two new edges have the same mass as the edge $e$ (see figure \ref{fig:subdivided sunrise}). We will show the following.

\begin{figure}[h]
\centering
\begin{tikzpicture}[scale=0.7]
\SetGraphUnit{3}
  
  \SetUpEdge[lw = 1pt,
  color      = black,
  labelcolor = white,
  labelstyle = {sloped,above,yshift=2pt}]
  
  \SetUpVertex[FillColor=black, MinSize=8pt, NoLabel]

  \Vertex[x=2,y=0]{1}
  \Vertex[x=6,y=0]{2}
  \Vertex[x=4,y=1.2]{3}
  \Vertex[x=0,y=0,empty=true]{4}
  \Vertex[x=8,y=0,empty=true]{5}
    \scriptsize
  \Edge[label=3,style = {double}](1)(2)
  
  \tikzset{EdgeStyle/.append style = {bend left=60}}
  \Edge[label=4,style = {double}](2)(1)
  
  \tikzset{EdgeStyle/.style = {bend left=20}}
  \Edge[label=1,style = {double}](1)(3)
  \Edge[label=2,style = {double}](3)(2)
  \tikzset{EdgeStyle/.style={postaction=decorate,decoration={markings,mark=at position 0.7 with {\arrow{latex}}}}}
  \tikzset{LabelStyle/.style = {sloped,above}}
  
  \Edge[label=$-q_1$](4)(1)
  \Edge[label=$q_1$](5)(2)
\end{tikzpicture}
\quad
\raisebox{2px}{
\begin{tikzpicture}[scale=0.7]
\SetGraphUnit{3}
  
  \SetUpEdge[lw = 1pt,
  color      = black,
  labelcolor = white,
  labelstyle = {sloped,above,yshift=2pt}]
  
  \SetUpVertex[FillColor=black, MinSize=8pt, NoLabel]

  \Vertex[x=2,y=0]{1}
  \Vertex[x=6,y=0]{2}
  \Vertex[x=4,y=1.2]{3}
  \Vertex[x=0,y=0,empty=true]{4}
  \Vertex[x=8,y=0,empty=true]{5}
  \Vertex[x=3.3,y=0]{6}
  \Vertex[x=4.6,y=0]{7}
  
  \scriptsize     
  \tikzset{LabelStyle/.style = {yshift=-1pt}}
  \Edge[label=3,style = {double}](1)(6)
  \Edge[label=4,style = {double}](6)(7)
  \Edge[label=5,style = {double}](7)(2)
 \tikzset{LabelStyle/.style = {yshift=1pt}}
  \tikzset{EdgeStyle/.append style = {bend left=60}}
  \Edge[label=6,style = {double}](2)(1)
  \scriptsize
  \tikzset{EdgeStyle/.style = {bend left=20}}
  \Edge[label=1,style = {double}](1)(3)
  \Edge[label=2,style = {double}](3)(2)
  \tikzset{EdgeStyle/.style={postaction=decorate,decoration={markings,mark=at position 0.7 with {\arrow{latex}}}}}
  \tikzset{LabelStyle/.style = {sloped,above}}
  \normalsize
  \Edge[label=$-q_1$](4)(1)
  \Edge[label=$q_1$](5)(2)

\end{tikzpicture}
}
\begin{tikzpicture}[scale=0.7]
\SetGraphUnit{3}
  
  \SetUpEdge[lw = 1pt,
  color      = black,
  labelcolor = white,
  labelstyle = {sloped,above,yshift=2pt}]
  
  \SetUpVertex[FillColor=black, MinSize=8pt, NoLabel]

  \Vertex[x=2,y=0]{1}
  \Vertex[x=6,y=0]{2}
  \Vertex[x=4,y=-1.2]{3}
  \Vertex[x=0,y=0,empty=true]{4}
  \Vertex[x=8,y=0,empty=true]{5}
  \Vertex[x=3.3,y=1.1]{6}
  \Vertex[x=4.6,y=1.1]{7}

  \scriptsize
  \Edge[label=4,style = {double}](1)(2)
  \tikzset{EdgeStyle/.append style = {bend left=10}}
  \Edge[label=1,style = {double}](1)(6)
  \Edge[label=2,style = {double}](6)(7)
  \Edge[label=3,style = {double}](7)(2)
  \tikzset{EdgeStyle/.style = {bend left=20}}
  \tikzset{LabelStyle/.style = {above}}
  \scriptsize
  \Edge[label=6,style = {double}](2)(3)
  \Edge[label=5,style = {double}](3)(1)
  \tikzset{EdgeStyle/.style={postaction=decorate,decoration={markings,mark=at position 0.7 with {\arrow{latex}}}}}
  \tikzset{LabelStyle/.style = {sloped,above}}
  \normalsize
  \Edge[label=$-q_1$](4)(1)
  \Edge[label=$q_1$](5)(2)
\end{tikzpicture}
\quad
\raisebox{2px}{
\begin{tikzpicture}[scale=0.7]
\SetGraphUnit{3}
  
  \SetUpEdge[lw = 1pt,
  color      = black,
  labelcolor = white,
  labelstyle = {sloped,above,yshift=2pt}]
  
  \SetUpVertex[FillColor=black, MinSize=8pt, NoLabel]

  \Vertex[x=2,y=0]{1}
  \Vertex[x=6,y=0]{2}
  \Vertex[x=3.3,y=-1.2]{7}
  \Vertex[x=4.7,y=-1.2]{8}
  \Vertex[x=0,y=0,empty=true]{4}
  \Vertex[x=8,y=0,empty=true]{5}
  \Vertex[x=4,y=0]{6}
  
  \scriptsize
  \Edge[label=2,style = {double}](1)(6)
  \Edge[label=3,style = {double}](6)(2)
  \tikzset{EdgeStyle/.append style = {bend left=60}}
  \Edge[label=1,style = {double}](1)(2)
  \tikzset{EdgeStyle/.style = {bend right=10}}
  \tikzset{LabelStyle/.style = {above,yshift=-1pt}}

  \Edge[label=4,style = {double}](1)(7)
  \Edge[label=5,style = {double}](7)(8)
  \Edge[label=6,style = {double}](8)(2)
  \tikzset{EdgeStyle/.style={postaction=decorate,decoration={markings,mark=at position 0.7 with {\arrow{latex}}}}}
  \tikzset{LabelStyle/.style = {sloped,above}}
  \normalsize
  \Edge[label=$-q_1$](4)(1)
  \Edge[label=$q_1$](5)(2)
\end{tikzpicture}

}

\caption{Graphs obtained from the sunrise by subdividing edges, denoted $G_{s(e_1)}$, $G_{s(e_1,e_2^2)}$, $G_{s(e_1^2,e_3)}$, $G_{s(e_2,e_3^2)}$, top-left to bottom-right respectively.}
\label{fig:subdivided sunrise}
\end{figure}
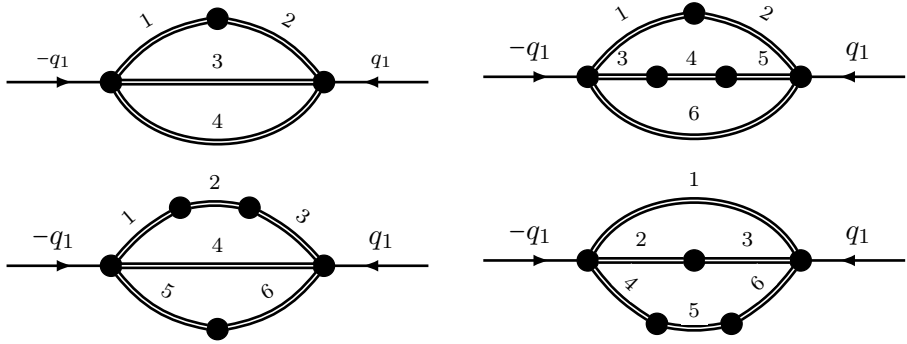

\begin{prop}
Let
\begin{equation}
    \begin{split}
    \rho : \,\, &\mathbb{A}^1 \times \mathbb{P}^{N_G-1} \longrightarrow \mathbb{P}^{N_G}\\
    & (y,\alpha_1,\ldots,\alpha_{N_G}) \mapsto (\alpha_1,\ldots,y\alpha_{N_G},(1-y)\alpha_{N_G}).
    \end{split}
\end{equation}
There exists a morphism $\tilde{\rho}$ such that the following diagram commutes
$$\xymatrix{
            \mathbb{A}^1 \times P^G \ar[d]^{\textrm{id} \times \pi_G} \ar[r]^{\tilde{\rho}}& P^{G_{s(e)}} \ar[d]^{\pi_{G_{s(e)}}}\\
             \mathbb{A}^1 \times \mathbb{P}^{N_G-1} \ar[r]^{\rho} & \mathbb{P}^{N_G}
            } 
$$
The morphism $\tilde{\rho}$ induces a morphism of graph motives
$$
mot_{G_{s(e)}} \xrightarrow{\tilde{\rho}^*} mot_G \otimes \mathbb{Q}(0) .
$$
\end{prop}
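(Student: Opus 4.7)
The plan is to first verify that $\rho$ pulls back the Symanzik data as expected, then lift $\rho$ through the iterated blow-up using its universal property, and finally extract the motive morphism from pullback of forms combined with a Künneth identification. For the first step, a standard contraction--deletion computation applied at the subdivided edge gives
\[
\Psi_{G_{s(e)}}(\alpha_1,\ldots,\alpha_{N_G-1},\alpha_{e'},\alpha_{e''}) \;=\; \Psi_G(\alpha_1,\ldots,\alpha_{N_G-1},\alpha_{e'}+\alpha_{e''}),
\]
and the analogous identity for $\Xi$ holds once the new edges carry the common mass $m_e$ (momentum conservation at the new valence-2 vertex preserves the momentum flow). Since $\alpha_{e'}=y\alpha_{N_G}$ and $\alpha_{e''}=(1-y)\alpha_{N_G}$ sum to $\alpha_{N_G}$, this gives $\rho^*\Psi_{G_{s(e)}} = \Psi_G$ and $\rho^*\Xi_{G_{s(e)}} = \Xi_G$.

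\textbf{Lifting the blow-ups.} The centres in $\mathbb{P}^{N_G-1}$ (resp.\ $\mathbb{P}^{N_G}$) defining $P^G$ (resp.\ $P^{G_{s(e)}}$) are coordinate linear subspaces $L_\gamma$ (resp.\ $L_{\gamma'}$) indexed by motic subgraphs. The key observation is that the new valence-2 vertex of $G_{s(e)}$ cannot be a degree-1 vertex of any motic subgraph, so every motic $\gamma' \subseteq G_{s(e)}$ contains both or neither of $\{e',e''\}$, giving a bijection with motic $\gamma \subseteq G$ (with $\gamma' \supseteq \{e',e''\}$ iff $\gamma \ni e$). One then checks scheme-theoretically that $\rho^{-1}(L_{\gamma'}) = \mathbb{A}^1 \times L_\gamma$; in the case $\gamma' \supseteq \{e',e''\}$ the equations $y\alpha_{N_G} = (1-y)\alpha_{N_G} = 0$ collapse to $\alpha_{N_G}=0$. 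Since $\mathbb{A}^1 \times P^G$ is precisely the iterated blow-up of $\mathbb{A}^1 \times \mathbb{P}^{N_G-1}$ along the centres $\mathbb{A}^1 \times L_\gamma$, the universal property of blow-ups, applied inductively in an inclusion-compatible order, produces $\tilde\rho$ making the square commute.

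\textbf{Induced morphism of motives.} The Symanzik compatibility yields $\tilde\rho^{-1}(X_{G_{s(e)}}) = \mathbb{A}^1 \times X_G$ as strict transforms, while $\rho^*V(\alpha_{e'}) = \{y=0\} \cup V(\alpha_{N_G})$ and $\rho^*V(\alpha_{e''}) = \{y=1\} \cup V(\alpha_{N_G})$ give, after tracking total and strict transforms through the blow-ups, $\tilde\rho^{-1}(D_{G_{s(e)}}) = (\mathbb{A}^1 \times D_G) \cup (\{0,1\} \times P^G)$. Pullback of differential forms then induces
\[
H^{N_G}\bigl(P^{G_{s(e)}} \setminus X_{G_{s(e)}},\, D_{G_{s(e)}}\bigr) \longrightarrow H^{N_G}\bigl(\mathbb{A}^1 \times (P^G \setminus X_G),\; \mathbb{A}^1 \times D_G \cup \{0,1\} \times P^G\bigr),
\]
and the relative Künneth formula, together with $H^1(\mathbb{A}^1,\{0,1\}) \cong \mathbb{Q}(0)$, identifies the target with $mot_G \otimes \mathbb{Q}(0)$.

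The hardest part is the middle step: the combinatorial bijection between motic subgraphs has to be set up so that the scheme-theoretic preimages match on the nose, and one must verify that the inductive blow-ups truly factor through $\tilde\rho$ without producing spurious exceptional components or indeterminacy over the degenerate fibres $y=0$ and $y=1$, where the geometry of $\rho$ is most delicate.
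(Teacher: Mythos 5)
Your approach genuinely differs from the paper's. The paper establishes $\tilde\rho$ by explicit computation on the affine charts $\mathbb{A}^{\mathcal{F},c}$ of $P^G$ and $P^{G_{s(e)}}$ taken from Brown's construction of the iterated blow-up: it writes down the induced map chart by chart (treating separately the case where the marked index $j_n \neq N_G$ for all $n$, and the case $j_r = N_G$ for some $r$, where two opposing charts on the target must be glued over $\{y \neq 0\}$ and $\{y \neq 1\}$), and then verifies the containment of boundary divisors directly. You instead invoke the universal property of blow-ups, observing that $\mathbb{A}^1 \times P^G = Bl_{\{\mathbb{A}^1 \times L_\gamma\}}(\mathbb{A}^1 \times \mathbb{P}^{N_G-1})$ and that the ideals of the centres match under $\rho$ (your computation $(y\alpha_{N_G},(1-y)\alpha_{N_G}) = (\alpha_{N_G})$ is the crux, and is correct). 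Both proofs rest on the same combinatorial lemma about motic subgraphs, which you stated correctly. The two routes have complementary strengths: the chart computation is elementary and leaves nothing to check, while your universal-property argument is cleaner conceptually and generalises more easily.

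That said, be aware that the step you flag as delicate is indeed where your sketch is incomplete. To run the induction via the universal property you must know, at every stage $k$ of the iterated blow-up, that the lift $\rho_{k-1}$ already constructed pulls the \emph{strict transform} of the next centre $L_{\gamma'_k}$ in $P^{G_{s(e)}}_{k-1}$ back to the strict transform of $\mathbb{A}^1 \times L_{\gamma_k}$ in $\mathbb{A}^1 \times P^G_{k-1}$; your computation of $\rho^{-1}(L_{\gamma'})$ only establishes this at stage $0$. The compatibility of strict transforms under a lift of a non-flat map between ambient blow-ups is not automatic and needs an argument (e.g.\ showing the relevant Tor vanishing, or checking irreducibility and dimension of the preimage) — precisely the kind of verification the paper sidesteps by going to coordinates. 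So your proposal is a correct proof strategy, and the two approaches are both valid, but your write-up stops at a sketch where the paper provides a complete verification.
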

We will show that composing morphisms $\tilde{\rho}$ allows us to write the following equivalence of motivic periods.
\begin{cor}
Let $I=\{e_1^{k_1},\ldots,e_{N_{G}}^{k_{N_{G}}}\}$ for each $k_i \geq 0$ an integer, and let $G_{s(I)}$ be the graph obtained by subdividing $k_i$ times the edge $e_i$ for $ 1 \leq i \leq N_{G}$. Let $I_{G_{s(I)}}^{\fm}(m,q)$ be its motivic Feynman amplitude in $d_{G_{s(I)}}$ dimensions, $\omega_G(m,q)$ be the Feynman integrand of $G$ in $d_G$ dimensions, and $K = \sum_{i=1}^{N_G}k_i$. Then 
\begin{equation*}
I_{G_{s(I)}}^{\fm}(m,q) =  \left[mot_G, \left[\sigma_G\right], \left[\pi_{G}^*\left((-1)^K\alpha_{1}^{k_1}\cdots\alpha_{N_G}^{k_{N_G}}\Psi_G^{z_1}\Xi_G^{z_2}\omega_G(m,q))\right)\right] \right]^{\fm},
\end{equation*}
 where $z_1 = K - (h_G/2+1/2)(d_{G_{s(I)}}-d_G)$, and $z_2 = h_G/2(d_{G_{s(I)}}-d_G)-K$.
\end{cor}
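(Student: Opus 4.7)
The strategy is to iterate the preceding proposition. For each edge $e_i$ to be subdivided $k_i$ times, composing the maps $\tilde\rho$ in succession yields a morphism of graph motives $\tilde\rho_I^*: mot_{G_{s(I)}} \to mot_G \otimes \mathbb{Q}(0)^{\otimes K}$, arising from an iterated geometric morphism $\tilde\rho_I: \mathbb{A}^K \times P^G \to P^{G_{s(I)}}$. I plan to compute the pullback of $\pi_{G_{s(I)}}^*\omega_{G_{s(I)}}(m,q)$ under $\tilde\rho_I$ in closed form, and then apply functoriality of motivic periods together with the triviality of $\mathbb{Q}(0)^{\otimes K}$ to identify the result with the motivic period stated in the corollary.

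The computation of $\tilde\rho_I^*\omega_{G_{s(I)}}$ splits into two parts. For the Symanzik factors, an edge-by-edge combinatorial argument (splitting spanning trees, respectively spanning $2$-trees, of $G_{s(e)}$ according to whether both new edges are included or only one) shows that $\Psi_{G_{s(e)}}$ is obtained from $\Psi_G$ by substituting $\alpha_e \mapsto \alpha_{e'}+\alpha_{e''}$, and similarly for $\Xi_G$; the mass term is preserved because the new edges inherit the mass of $e$, and the momentum contribution of each spanning $2$-tree is unchanged. Parametrizing the subdivided edge parameters symmetrically as $\beta_{i,j} = t_{i,j}\alpha_{e_i}$ with $\sum_j t_{i,j}=1$, the identity $\sum_j \beta_{i,j} = \alpha_{e_i}$ yields $\tilde\rho_I^*\Psi_{G_{s(I)}} = \Psi_G$ and $\tilde\rho_I^*\Xi_{G_{s(I)}} = \Xi_G$. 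For the canonical top form $\Omega_{G_{s(I)}}$, a direct determinant computation gives
$$d\beta_{i,0}\wedge\cdots\wedge d\beta_{i,k_i} = \alpha_{e_i}^{k_i}\, d\alpha_{e_i}\wedge dt_{i,1}\wedge\cdots\wedge dt_{i,k_i}$$
(up to sign), and combining this with the Euler vector field characterization $\Omega_G = \iota_{E_G}(d\alpha_1\wedge\cdots\wedge d\alpha_{N_G})$ yields
$$\tilde\rho_I^*\Omega_{G_{s(I)}} = (-1)^K \alpha_1^{k_1}\cdots\alpha_{N_G}^{k_{N_G}}\,\Omega_G \wedge \omega_\Delta,$$
where $\omega_\Delta$ is the product of simplex volume forms in the $t_{i,j}$ coordinates. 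Matching the exponents of $\Psi_G,\Xi_G$ in $\omega_{G_{s(I)}}$ versus $\omega_G$ via $N_{G_{s(I)}} = N_G + K$ and $h_{G_{s(I)}} = h_G$ then produces
$$\tilde\rho_I^*\omega_{G_{s(I)}}(m,q) = (-1)^K \alpha_1^{k_1}\cdots\alpha_{N_G}^{k_{N_G}}\,\Psi_G^{z_1}\Xi_G^{z_2}\,\omega_G(m,q) \wedge \omega_\Delta$$
with $z_1,z_2$ as in the statement.

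Finally, the positive cycle $\sigma_{G_{s(I)}}$ lifts through $\tilde\rho_I$ to $\sigma_G \times \Delta$, where $\Delta$ is the corresponding product of standard simplices in the $t_{i,j}$ coordinates. Applying functoriality of motivic periods under $\tilde\rho_I^*$ and the K\"unneth decomposition of $mot_G \otimes \mathbb{Q}(0)^{\otimes K}$, with the motivic period on the trivial factor $\mathbb{Q}(0)^{\otimes K}$ set to $1$ under the canonical normalization, yields the stated equality. The main obstacle is the careful bookkeeping of signs and Jacobian factors through the iteration: a direct iteration of $\tilde\rho$'s one subdivision at a time produces non-symmetric Jacobian terms (of the form $(1-y)$ and similar) that obscure the final expression, and the key technical point is that passing to the symmetric simplex parametrization of the subdivided edges (via a change of variables on $\mathbb{A}^K$, which is an automorphism on the $\mathbb{Q}(0)^{\otimes K}$ factor) is what produces the clean factor $\alpha_1^{k_1}\cdots\alpha_{N_G}^{k_{N_G}}$ claimed in the corollary.
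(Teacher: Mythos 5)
Your proposal takes a genuinely different route from the paper's. The paper proves only the single-subdivision case explicitly (via Lemma~\ref{pullback of dotted no blowup}, the pushforward of $\gamma_{0,1}\times\sigma_G$, and the identification $\mathrm{per}\bigl(\left[H^1(\mathbb{A}^1,\{0,1\}),[\gamma_{0,1}],[-dy]\right]^{\fm}\bigr)=-1$), and then asserts that the general case ``follows by induction'' without carrying it out. You instead compute the pullback along the composed morphism $\tilde\rho_I\colon\mathbb{A}^K\times P^G\to P^{G_{s(I)}}$ all at once, in the symmetric simplex parametrization $\beta_{i,j}=t_{i,j}\alpha_{e_i}$, $\sum_j t_{i,j}=1$. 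The intermediate steps you describe are sound: the spanning-tree bijection combined with $\sum_j\beta_{i,j}=\alpha_{e_i}$ does give $\tilde\rho_I^*\Psi_{G_{s(I)}}=\Psi_G$ and $\tilde\rho_I^*\Xi_{G_{s(I)}}=\Xi_G$; the Jacobian identity $d\beta_{i,0}\wedge\cdots\wedge d\beta_{i,k_i}=\alpha_{e_i}^{k_i}\,d\alpha_{e_i}\wedge dt_{i,1}\wedge\cdots\wedge dt_{i,k_i}$ is correct; and your exponent bookkeeping via $N_{G_{s(I)}}=N_G+K$, $h_{G_{s(I)}}=h_G$ correctly reproduces $z_1,z_2$.

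The gap is in the closing step, and it is exactly the point you flag as the ``main obstacle.'' Passing to the symmetric $t$-coordinates does not remove the $(1-y)$-type Jacobian factors that appear under iteration of $\tilde\rho$: the change of variables from iterated $y$'s to the $t_{i,j}$'s is \emph{not} a unit-Jacobian automorphism of $\mathbb{A}^K$. For $K=2$ on a single edge one has $t_1=y_1(1-y_0)$, $t_2=(1-y_1)(1-y_0)$ with Jacobian $1-y_0$, so the problematic factor is merely transferred from the form to the cycle. In your parametrization the relevant cycle on the $\mathbb{A}^K$ factor is the product of standard simplices $\prod_i\Delta^{k_i}$, not a unit cube, and the $\mathbb{Q}(0)^{\otimes K}$ contribution to the tensor product is the motivic period of the pair $\bigl(\prod_i\Delta^{k_i},\,\omega_\Delta\bigr)$, whose period is
$$
\int_{\prod_i\Delta^{k_i}}\omega_\Delta=\prod_{i=1}^{N_G}\frac{1}{k_i!}\, ,
$$
not $1$. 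Your statement that ``the motivic period on the trivial factor $\mathbb{Q}(0)^{\otimes K}$ is set to $1$ under the canonical normalization'' is not something you have established: the canonical normalization says the period of the canonical generator of $\mathbb{Q}(0)$ is $1$, but the particular pair (cycle, form) arising here is the one above. Whenever some $k_i\geq 2$ your argument therefore produces the asserted formula multiplied by $\prod_i 1/k_i!$, and you would need either to carry this rational factor through the statement or to justify why it disappears. (For what it is worth, unwinding the paper's ``by induction'' honestly through the single-subdivision lemma appears to meet the same simplex-volume factor; but within the scope of your proposal the step as written is a genuine gap.)
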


It was conjectured by Brown in \cite[\S 9.4]{Brown2} that the morphism 
\begin{equation}
\begin{split}
\left\{\text{Graphs with } m=q=0\right\} & \rightarrow \textrm{Rep}_{\QQ}\left(G^{\dR}_{\mathcal{H}(\Sp(\QQ))}\right) \\
G &\mapsto \mathcal{FP}^{\fm}(G),
\end{split}
\end{equation} 
where $\mathcal{FP}^{\fm}(G) = [mot_G,[\sigma_G],[\omega]]$ and $\omega$ is a section of $(mot_G)_{\dR}$, factors through the quotient given by the relation $G \sim G'$ when $G'$ is obtained from $G$ by subdividing an edge. This is simply a restating of the above corollary with $m=q=0$.

By applying subdivision of edges to the case of the sunrise we will be able to write all conjugates in terms of motivic Feynman amplitudes of subquotient graphs and graphs obtained by subdividing edges. In order for us to be able to write all conjugates of a general motivic Feynman amplitude in terms of motivic Feynman amplitudes of other graphs it is necessary that the de Rham realisations of all graph motives are spanned by globally defined algebraic differential forms on the underlying geometric space. At the time of writing it is not known if this holds.

\subsubsection{Coaction on the sunrise with generic masses}

Let $G$ be the sunrise graph, and let $\mathcal{E}$ be the family of elliptic curves over the space of kinematics given by the vanishing locus of the second Symanzik polynomial, with its rational point $[0:1:0]$. Let $I^{\fm}_{G_{s(I)}}$, for $I \in \{\{e_1\},\{e_1,e_2^2\},\{e_1^2,e_3\},\{e_2,e_3^2\}\}$, be the motivic Feynman amplitudes of the graphs in figure \ref{fig:subdivided sunrise}, where the first one is taken to be in $d=2$ and the rest in $d=4$. We will show that the classes of the integrands of these graphs along with that of the sunrise itself and the class of a differential form given by the image of the Feynman integrand of $G \setminus e_3$ in $d=4$ under the face map $mot_{G \setminus e_3} \rightarrow mot_G$ (see \cite[\S 10.3]{Brown1} for face maps) span a de Rham basis of $mot_G$. Let $K_1^{\fdr},K_{2,\eta}^{\fdr}$ be the de Rham elliptic integrals associated to this elliptic curve, and $F^{\fdr}_{\underline{b}}$ be a $k_S$-linear combination of $F^{\fdr}_{P_1,P_i}$ for $P_1,\ldots,P_6$ the six points $D \cap \mathcal{E}$ (see figure \ref{figure_sunrise_restricted}), and $k_S$ the field of rational functions of masses and momenta. These linear combinations are given explicitly in \S\ref{Arbitrary masses coaction section}. Let $I^{\fdr}_{G,G \setminus e_3}$ be the de Rham period of $mot_G$ associated to the class of the Feynman integrand of the sunrise in $d=2$ dimensions, and the dual\footnote{This is the dual basis with respect to the previously chosen basis of Feynman integrands.} of the class corresponding to $G \setminus e_3$ as described above. Let $I^{\fm}_G$ be the motivic Feynman amplitude associated to the sunrise in $d=2$ space-time dimensions. 
\begin{thm} 
The motivic Galois coaction on the motivic Feynman amplitude associated to the sunrise Feynman graph with respect to the de Rham basis defined in Proposition \ref{basis-elliptic-dr} is
\begin{equation}
\begin{split}
    \Delta(I^{\fm}_G) &= I^{\fm}_G \otimes K_1^{\fdr}\mathbb{L}^{\fdr} + I^{\fm}_{G_{s(e_1)}} \otimes K_{2,\eta}^{\fdr}\mathbb{L}^{\fdr} + I^{\fm}_{G \setminus e_3} \otimes I^{\fdr}_{G,G \setminus e_3} +\\
    & +I^{\fm}_{G_{s(e_1,e_2^2)}} \otimes F^{\fdr}_{\underline{b_1}}\mathbb{L}^{\fdr}  + I^{\fm}_{G_{s(e_1^2,e_3)}} \otimes F^{\fdr}_{\underline{b_2}}\mathbb{L}^{\fdr} + I^{\fm}_{G_{s(e_2,e_3^2)}} \otimes F^{\fdr}_{\underline{b_3}}\mathbb{L}^{\fdr} .
\end{split}
\end{equation}
\end{thm}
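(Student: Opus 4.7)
The plan is to apply the general coaction formula \eqref{general-coaction} to $I^{\fm}_G = [mot_G,[\sigma_G],[\omega_G]]^{\fm}$ with respect to the de Rham basis $\{[e_i]\}_{i=1}^{6}$ of $(mot_G)_{\dR}$ furnished by Proposition \ref{basis-elliptic-dr}. This basis consists of six classes of Feynman integrands: $[\omega_G]$ (sunrise in $d=2$), that of $G_{s(e_1)}$ in $d=2$, the three integrands of $G_{s(e_1,e_2^2)}$, $G_{s(e_1^2,e_3)}$, $G_{s(e_2,e_3^2)}$ in $d=4$, and the image of $[\omega_{G \setminus e_3}]$ in $d=4$ under the face map. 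Substituting yields six terms of the form $[mot_G,[\sigma_G],[e_i]]^{\fm} \otimes [mot_G,[e_i]^{\vee},[\omega_G]]^{\fdr}$, each of which must be identified with a pair in the claimed decomposition.

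The motivic tensor factors follow from the constructions already in place. The diagonal term is tautological. The three entries coming from subdivided graphs are handled by the Corollary of \S\ref{section: subdivison of edges}, which expresses $I^{\fm}_{G_{s(I)}}$ as a matrix coefficient of $mot_G$ with integration cycle $[\sigma_G]$ and integrand the appropriate basis element, for the choice of space-time dimensions making $z_1=z_2=0$. The entry coming from $G \setminus e_3$ follows from the functoriality of motivic periods under the face map, which pulls the cycle $[\sigma_G]$ back to the integration cycle of $G \setminus e_3$.

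The de Rham tensor factors are extracted from the iterated extension structure of $mot_G$. Since the graph hypersurface of the sunrise cuts out an elliptic curve $\mathcal{E}$ and $D$ meets $\mathcal{E}$ in six marked points $P_1,\ldots,P_6$, the de Rham realisation of $mot_G$ projects onto $H^1_{\dR}(\mathcal{E})$, resp. $H^1_{\dR}(\mathcal{E},\{P_1,P_i\})$, twisted by a power of the Lefschetz motive; the factor $\mathbb{L}^{\fdr}$ in five of the six summands is precisely this Tate twist. Under these projections the class $[\omega_G]$ becomes a form of the first kind on $\mathcal{E}$, so that pairing it with the duals $[\omega]^{\vee}$, $[\eta]^{\vee}$ yields $K_1^{\fdr}$ and $K_{2,\eta}^{\fdr}$, while pairing with the duals of the three $d=4$ subdivided-integrand classes yields $k_S$-linear combinations $F^{\fdr}_{\underline{b_j}}$ of incomplete integrals $F^{\fdr}_{P_1,P_i}$, with the coefficients $\underline{b_j}$ obtained by writing the basis elements in terms of the canonical marked-point classes $\{[df_i]\}$ in $H^1_{\dR}(\mathcal{E},\{P_1,\ldots,P_6\})$. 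The remaining pairing, against the face-map class, does not factor through any elliptic quotient and is by definition the irreducible de Rham period $I^{\fdr}_{G,G\setminus e_3}$. The main technical obstacle is the explicit determination of the coefficients $\underline{b_j}$: this reduces to computing residues of the three $d=4$ subdivided Feynman integrands along the six points $P_i$ and inverting the resulting linear system over $k_S$, which is the content of the explicit calculation in \S\ref{Arbitrary masses coaction section}.
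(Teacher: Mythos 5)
Your proposal follows essentially the same route as the paper: apply the general coaction formula with respect to the de Rham basis of Proposition \ref{basis-elliptic-dr}, identify the motivic tensor factors via the subdivision corollary and the face map, and identify the de Rham tensor factors by transferring along the residue morphism to $H^1(\mathcal{E},D\cap\mathcal{E})(-1)$ and solving the linear system from Appendix \ref{Residues of integrands}. One small factual slip: the space-time dimensions are \emph{not} chosen to make $z_1=z_2=0$; for $G_{s(e_1)}$ in $d=2$ one has $(z_1,z_2)=(1,-1)$, yielding the extra factor $\Psi_G/\Xi_G$ in $\eta_G$, and for the three $d=4$ graphs one has $(z_1,z_2)=(0,-1)$, yielding $1/\Xi_G$ — the identification from Corollary \ref{subdivided motivic period} holds regardless, so the argument is unaffected; it is also worth making explicit (as the paper does) that the de Rham functionals dual to $[\nu_i],[\eta_G],[\pi_G^*\omega_G]$ factor through the residue precisely because they kill $\phi_{\dR}(\nu_0)$, which spans $W_0(mot_G)_{\dR}=\ker(res)_{\dR}$.
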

By considering appropriate linear combinations of graphs over $k_S$, we may change the basis to simplify the expression on the motivic side of the coaction and obtain
\begin{equation}
\begin{split}
    \Delta(I^{\fm}_G) &= I^{\fm}_G \otimes K_1^{\fdr}\mathbb{L}^{\fdr} + I^{\fm}_{G_{s(e_1)}} \otimes K_{2,\eta}^{\fdr}\mathbb{L}^{\fdr} + I^{\fm}_{G \setminus e_3} \otimes I^{\fdr}_{G,G \setminus e_3} +\\ &+\log^{\fm} \left(\frac{m_3^2}{m_2^2} \right) \otimes F^{\fdr}_{\underline{b'_1}}\mathbb{L}^{\fdr}
    +\log^{\fm} \left(\frac{m_1^2}{m_3^2} \right)\otimes F^{\fdr}_{\underline{b'_2}}\mathbb{L}^{\fdr}
    +\log^{\fm} \left(\frac{m_2^2}{m_1^2} \right) \otimes F^{\fdr}_{\underline{b'_3}}\mathbb{L}^{\fdr}  \, .
\end{split}
\end{equation}
Finally, in the equal-mass case certain terms of the coaction vanish, as is apparent from the last expression involving logs of ratios of masses, and we have:
\begin{thm}
The motivic Galois coaction for the equal-mass sunrise can be expressed as
\begin{equation}
\begin{split}
    \Delta(I^{\fm}_G) &= I^{\fm}_G \otimes K_1^{\fdr}\mathbb{L}^{\fdr} + I^{\fm}_{G_{s(e_1)}} \otimes K_{2,\eta}^{\fdr}\mathbb{L}^{\fdr} + I^{\fm}_{G \setminus e_3} \otimes I^{\fdr}_{G,G \setminus e_3}.
\end{split}
\end{equation}
\end{thm}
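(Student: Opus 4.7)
The plan is to deduce the equal-mass coaction by specialising the generic-mass coaction to the locus where $m_1 = m_2 = m_3$. In the reformulated version of the generic coaction, which replaces the $F^{\fdr}_{\underline{b_i}}$ by motivic logarithms of ratios of squared masses on the motivic side, three of the six terms carry the factor $\log^{\fm}(m_i^2/m_j^2)$ with $i \neq j$. The crux is that $\log^{\fm}$ is a motivic period which specialises to $\log^{\fm}(1) = 0$, so each of these three terms vanishes in the equal-mass limit, leaving precisely the claimed three-term expression.

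Concretely, I would set up the restriction functor $\iota^{*}: \mathcal{H}(S) \to \mathcal{H}(S_{\mathrm{eq}})$ associated to the inclusion $\iota : S_{\mathrm{eq}} \hookrightarrow S$ of the equal-mass locus inside the Zariski open $S$ of generic kinematics. This functor sends the family graph motive $mot_G$ to its equal-mass specialisation and commutes with the formation of motivic and de Rham matrix coefficients. Since the motivic Galois coaction is natural with respect to such pullbacks, applying $\iota^{*}$ to both sides of the restated generic-mass formula yields a valid coaction for $\iota^{*} I^{\fm}_G$, in which the three logarithmic motivic periods have collapsed to zero.

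I would then verify that the remaining de Rham periods $K_1^{\fdr}$, $K_{2,\eta}^{\fdr}$, and $I^{\fdr}_{G,G \setminus e_3}$ are well-defined on $S_{\mathrm{eq}}$. The second Symanzik polynomial of the sunrise still cuts out a family of smooth elliptic curves over $S_{\mathrm{eq}}$ for generic external momentum $q_1$, so the basis of $H^1_{\dR}(\mathcal{E})$ used to define the $K^{\fdr}$'s persists, and the six-element de Rham basis of $(mot_G)_{\dR}$ from Proposition \ref{basis-elliptic-dr} pulls back to a basis under $\iota$. Note that the de Rham periods $F^{\fdr}_{\underline{b'_i}}$ attached to the three vanishing log terms may themselves degenerate upon specialisation, but since they are paired with motivic periods that are zero this has no effect on the final formula.

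The main obstacle is controlling the specialisation of the de Rham basis: one must check that the six basis classes do not become linearly dependent along $S_{\mathrm{eq}}$, and that the graph motive $mot_G$ does not acquire unexpected components as the kinematics degenerate. I would handle this by examining the explicit iterated blow-up construction of $P^G$ to confirm that the relevant boundary strata remain transverse over $S_{\mathrm{eq}}$ and that $(mot_G)_{\dR}$ stays locally free of the expected rank. Once this compatibility of specialisation with the Tannakian structure is established, the equal-mass coaction follows immediately from the vanishing of the three logarithmic terms.
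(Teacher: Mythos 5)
Your proposal matches the paper's primary argument: specialise the restated generic-mass coaction (with motivic logarithms $\log^{\fm}(m_i^2/m_j^2)$ on the left of the tensor) to the equal-mass locus, and observe that those motivic logarithms vanish because they are determined by their periods and $\log(1)=0$. That is exactly the first sentence of the paper's proof.

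The paper then supplements this with a structural mixed-Hodge-theoretic explanation you do not give: in the equal-mass case the points $\mathcal{E}\cap D$ are torsion points of $\mathcal{E}$, which (by \cite[Lemma~6.16]{BV}) splits the extension $0\to H^2(P)/[\mathcal{E}]\to H^2(P\setminus\mathcal{E})\to H^1(\mathcal{E})(-1)\to 0$. Pulling back the graph motive along the resulting splitting produces a rank-3 object $E\in\mathcal{H}(S')$ with de Rham basis $\{[\phi_{\dR}(\nu_0)],[res(\eta_G)],[res(\omega_G)]\}$, and $I^{\fm}_G$ is a motivic period of $E$. This gives an intrinsic reason why the coaction must have exactly three terms, and it constructs the object $E$ that is used in Part II of the paper for the Eichler-integral representation. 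Your specialisation argument proves the formula but does not exhibit $E$.

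On your technical worries: they are reasonable to flag, but they are easily discharged by inspecting the explicit coefficients. The $b'_{i,k}$ in \S\ref{coaction in second basis} are rational constants (entries like $1/6$, $7/24$, \ldots), so the de Rham periods $F^{\fdr}_{\underline{b'_i}}$ do not blow up on $S_{\mathrm{eq}}$. The denominators $d_1,d_2,d_3$ in Appendix~\ref{Residues of integrands} specialise at $m_1=m_2=m_3=m$ to nonzero multiples of $(m^2+q_1^2)^2$, so the subdivided integrands $\nu_i$ and hence the de Rham basis of Proposition~\ref{basis-elliptic-dr} remain well-defined and linearly independent for generic $q_1^2$ on the equal-mass locus. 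The specialisation is therefore unobstructed, as the paper implicitly assumes.
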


This concludes the first part of the paper.

\subsection{Motivic periods of the relative completion of the torsor of paths on a modular curve and the sunrise}
The theory of the relative completion of path torsors of modular curves $\pi_1^{\textrm{rel}}(X_{\Gamma},x,y)$, where $X_{\Gamma}$ is a modular curve for $\Gamma$ a congruence subgroup and $x,y\in X_{\Gamma}(k)$ was studied by Hain \cite{Hain1,Hain2} and Brown \cite{Brown5}. The periods of the affine ring of functions $\mathcal{O}(\pi_1^{\textrm{rel}}(X_{\Gamma},x,y))$ are given by products of iterated Eichler integrals, powers of periods of $\mathcal{E}$ and powers of $2\pi i$. In the second part of this paper we describe the motivic lifts of those periods generalising \cite[\S 15]{Brown5} where $\pi_1^{\textrm{rel}}(\mathcal{M}_{1,1},\vec{1}_{\infty})$ is considered. We then show the following.
\begin{thm}
    Let $f: \mathcal{E} \rightarrow X_{\Gamma}$ be the universal elliptic curve for the modular curve $X_{\Gamma}$. Let $\mathcal{V} = (\mathbb{V}_{\Beta},\mathcal{V}_{\dR},c) \in \mathcal{H}(X_{\Gamma})$ be such that the local system $\mathbb{V}_{\Beta}$ has a filtration
    $$
    0 \subset \mathbb{V}_{\Beta}^1 \subset \mathbb{V}_{\Beta}^2 \subset \cdots \subset \mathbb{V}_{\Beta}^k = \mathbb{V}_{\Beta}, 
    $$
    such that 
    $$
    \mathbb{V}_{\Beta}^i/\mathbb{V}_{\Beta}^{i-1} \in Sym^k(H^1_{\Beta}(\mathcal{E}/X_{\Gamma})(r),
    $$
    where $k,r \in \ZZ$ for all $0<i\leq k$. Furthermore we assume that $\mathcal{V}_{\dR}$ has an analogous filtration such that $\mathcal{V}_{\dR}^i/\mathcal{V}_{\dR}^{i-1} \in Sym^k(H^1_{\dR}(\mathcal{E}/X_{\Gamma}))$. Let $\mathcal{P}^{\fm}_{\mathcal{V}_x}$ be the span over $k$ of $[\mathcal{V}_x,\sigma,\omega]^{\fm}$ where $\sigma \in \mathbb{V}_{\Beta,x}^{\vee}$ and $\omega \in \mathcal{V}_{\dR,x}$, i.e. the periods of the `fiber at $x$'. Then
    $$
        \mathcal{P}^{\fm}_{\mathcal{V}_y} \subset \mathcal{P}^{\fm}_{\mathcal{V}_x} \otimes \mathcal{O}(\pi_1^{\textrm{rel}}(X_{\Gamma},x,y)).
    $$
\end{thm}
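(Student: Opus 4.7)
The proof is Tannakian in spirit. The filtration hypothesis places $\mathcal{V}$ in the Tannakian subcategory of $\mathcal{H}(X_\Gamma)$ generated by $H^1(\mathcal{E}/X_\Gamma)$ and Tate objects, which is precisely the category whose Tannaka groupoid is the relative completion $\pi_1^{\textrm{rel}}(X_\Gamma, -, -)$ in the sense of Hain and Brown. The path torsor $\pi_1^{\textrm{rel}}(X_\Gamma, x, y)$ is by definition the scheme of tensor-isomorphisms between the fibre functors at $x$ and at $y$ on this subcategory, so the Tannakian dictionary immediately predicts that matrix coefficients of $\mathcal{V}_y$ factor through matrix coefficients of $\mathcal{V}_x$ composed with an element of $\mathcal{O}(\pi_1^{\textrm{rel}}(X_\Gamma, x, y))$. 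The plan is to realise this concretely via parallel transport and to upgrade the resulting decomposition to the motivic level using the motivic lift of $\mathcal{O}(\pi_1^{\textrm{rel}}(X_\Gamma, x, y))$ constructed earlier in this section.

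Concretely, I would fix a path $\gamma$ from $x$ to $y$ in $X_\Gamma(\CC)$ and use Betti parallel transport to pull any $\sigma_y \in \mathbb{V}_{\Beta,y}^\vee$ back to some $\sigma_x \in \mathbb{V}_{\Beta,x}^\vee$. After passing to a suitable Zariski open containing $x$ and $y$, I would pick a basis $\{e_i\}$ of sections of $\mathcal{V}_\dR$ compatible with the filtration, and consider the connection matrix $A = (A_{ij})$ of the Gauss-Manin connection in this basis. Because each graded piece of $\mathcal{V}_\dR$ is a $\Sym^k H^1_\dR(\mathcal{E}/X_\Gamma)(r)$, the entries of $A$ are built from the Gauss-Manin connection on $H^1_\dR(\mathcal{E}/X_\Gamma)$, so the iterated integrals of the $A_{ij}$ along $\gamma$ lie in the de Rham part of $\mathcal{O}(\pi_1^{\textrm{rel}}(X_\Gamma, x, y))$ by the very construction of the relative-completion Hopf algebra. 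Writing $\omega_y = \sum_i c_i \, e_i|_y$ with $c_i \in k$ and letting $\lambda_{ji}(\gamma)$ denote the entries of the inverse parallel-transport matrix, the numerical identity
$$
[\mathcal{V}_y, \sigma_y, \omega_y] \;=\; \sum_{i,j} c_i \, \lambda_{ji}(\gamma) \, [\mathcal{V}_x, \sigma_x, e_j|_x]
$$
expresses periods at $y$ in terms of periods at $x$ weighted by functions in $\mathcal{O}(\pi_1^{\textrm{rel}}(X_\Gamma, x, y))$.

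The remaining and principal step, which I expect to be the main obstacle, is to lift this identity from numerical to motivic periods, i.e.\ to show it holds in $\mathcal{P}^{\fm}_{\mathcal{V}_x} \otimes \mathcal{O}(\pi_1^{\textrm{rel}}(X_\Gamma, x, y))$. This amounts to verifying that the Betti and de Rham parallel transports are compatible with the comparison isomorphism $c$ of $\mathcal{V}$ and with the corresponding comparison data of the motivic lift of $\mathcal{O}(\pi_1^{\textrm{rel}}(X_\Gamma, x, y))$ established earlier. Once these compatibilities are settled the inclusion follows from the Tannakian universal property of the relative completion applied to the subcategory of $\mathcal{H}(X_\Gamma)$ containing $\mathcal{V}$, completing the proof.
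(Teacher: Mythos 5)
Your Tannakian framing is the right one, and your parallel-transport picture is essentially the Betti half of the paper's argument, but you have explicitly left open what you call the ``remaining and principal step'' — lifting a numerical period identity to a motivic one — and that is exactly where your route runs into trouble. The detour through numerical periods is what creates the obstacle in the first place.

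The paper never passes through numbers at all. It constructs the coaction
$$
\rho:\ \mathcal{V}_{y}\ \longrightarrow\ \mathcal{V}_{x}\otimes_k \mathcal{O}\bigl(\pi_1^{\mathrm{rel}}(X_{\Gamma},x,y)\bigr)
$$
directly, by the Tannakian observation that any $k$-point of $\mathrm{Isom}^{\otimes}$ between the fibre functors at $x$ and $y$ (in both the Betti category $\mathcal{W}(\mathcal{L}_{X_\Gamma},\mathcal{S}^B)$ and the de Rham category $\mathcal{W}(\mathcal{A}_{X_\Gamma},\mathcal{S}^{\dR})$, which contain $\mathbb{V}_\Beta$ and $\mathcal{V}_\dR$ by hypothesis) induces an isomorphism of fibres. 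The crux, which your proposal does not surface, is that $\rho$ is a morphism of mixed Hodge structures by Hain's Theorem 8.1 of \cite{Hain2}, hence a morphism in $\mathcal{H}(k)$. Given that, the motivic identity is automatic: writing $\rho_{\dR}(\omega_y)=\omega_x\otimes\xi$ and using $\rho^{\vee}_{\Beta}\bigl([\gamma^{\Beta}]\otimes[\sigma_x]\bigr)=[\sigma_y]$ (your parallel-transport statement, phrased dually), functoriality of motivic periods under morphisms in $\mathcal{H}(k)$ gives at once
$$
[\mathcal{V}_y,\sigma_y,\omega_y]^{\fm}=[\mathcal{V}_x,\sigma_x,\omega_x]^{\fm}\otimes[\mathcal{O}(\pi_1^{\mathrm{rel}}(X_{\Gamma},x,y)),\gamma^{\Beta},\xi]^{\fm}.
$$
There is no separate ``compatibility of Betti and de Rham parallel transports with $c$'' to verify; that compatibility is precisely the content of $\rho$ being a morphism in $\mathcal{H}(k)$, and it is supplied by Hain. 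So the missing ingredient in your proposal is the citation that $\rho$ respects the Hodge (and hence $\mathcal{H}(k)$) structure, which replaces your numerical-to-motivic lifting step entirely. Your explicit Gauss–Manin connection matrix and iterated-integral description is not wrong, but it is working harder than necessary and, as written, does not close the argument.
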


This implies that periods of such an object $\mathcal{V}$ at $y$ can be expressed as products of iterated Eichler integrals from $x$ to $y$ and periods of $\mathcal{V}$ at $x$. Turning back to the sunrise, it is known from \cite{BV} that in the equal-mass case the elliptic curve associated to the sunrise is the universal family $\mathcal{E} \rightarrow X_1(6)$.
\begin{cor}
 The motivic Feynman amplitude of the sunrise in the equal-mass case is equivalent to a motivic period of an object $E \in \mathcal{H}(X_1(6))$ satisfying the conditions of the previous theorem. The sunrise Feynman integral can therefore be written as a $k$-linear combination of products of iterated Eichler integrals, powers of periods of $\mathcal{E}$ and powers of $2\pi i$.
\end{cor}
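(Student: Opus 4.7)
The plan is to promote the graph motive of the equal-mass sunrise to an object of $\mathcal{H}(X_1(6))$ satisfying the hypotheses of the previous theorem, and then to invoke that theorem together with the Brown--Hain description of the periods of $\mathcal{O}(\pi_1^{\textrm{rel}}(X_1(6),x,y))$.

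First I would recall from \cite{BV} that for the equal-mass sunrise the family of elliptic curves $\mathcal{E}\rightarrow S$ cut out by the second Symanzik polynomial, together with its marked section $[0:1:0]$, is the pullback of the universal elliptic curve $\mathcal{E}\rightarrow X_1(6)$ along a map $s\mapsto\tau(s)$, where $s=q_1^2$ parameterises the kinematics. Hence the base $S$ maps into $X_1(6)$ and any object of $\mathcal{H}(S)$ built from $\mathcal{E}/S$ descends to an object of $\mathcal{H}(X_1(6))$. Next I would use the equal-mass coaction just proved to identify the structure of $mot_G$ as an iterated extension: the conjugates appearing there, namely $K_1^{\fdr}\mathbb{L}^{\fdr}$, $K_{2,\eta}^{\fdr}\mathbb{L}^{\fdr}$ and $I^{\fdr}_{G,G\setminus e_3}$, show that $mot_G$ is an iterated extension whose successive subquotients are Tate twists either of $H^1(\mathcal{E})=\Sym^1(H^1(\mathcal{E}))$ or of $\QQ(0)=\Sym^0(H^1(\mathcal{E}))$. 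This matches exactly the filtration hypothesis of the previous theorem; let $E$ denote the resulting object of $\mathcal{H}(X_1(6))$, so that $mot_G$ is the fiber $E_y$ at the point $y\in X_1(6)$ corresponding to the physical kinematics.

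I would then choose $x$ to be the tangential base point at the cusp $\infty$ of $X_1(6)$, apply the previous theorem, and obtain
$$
\mathcal{P}^{\fm}_{E_y}\subset \mathcal{P}^{\fm}_{E_x}\otimes \mathcal{O}(\pi_1^{\textrm{rel}}(X_1(6),x,y)).
$$
The periods of the right-hand factor are, by the Brown--Hain theory recalled in the second part of the paper, $k$-linear combinations of products of iterated Eichler integrals of modular forms on $\Gamma_1(6)$, of powers of $2\pi i$, and of periods of $\mathcal{E}$. The periods of $\mathcal{P}^{\fm}_{E_x}$, evaluated at the cuspidal tangential base point, reduce to products of powers of $2\pi i$ and of periods of the limiting fiber of $\mathcal{E}$, so they already lie in the claimed classes. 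Applying the period homomorphism then yields the stated expression for the sunrise Feynman integral itself.

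The main obstacle is the descent step: verifying that $mot_G$ for the equal-mass sunrise really arises as the fiber of a global object $E$ on $X_1(6)$, not merely of an object over a formal neighbourhood of $y$, and that the filtration by $\Sym^k(H^1_{\Beta}(\mathcal{E}/X_1(6)))(r)$ exists globally. The extension structure is known fiberwise from the coaction computation, but producing it globally requires tracking the fibration underlying $mot_G$ uniformly in $s$; the key input is that $\Psi_G$, $\Xi_G$ and the iterated blow-up combinatorics all vary algebraically with $s$, together with the identification of the relevant relative cohomology of the sunrise with a family over $X_1(6)$ from \cite{BV}. Once this descent is secured, the rest of the argument is an immediate application of the previous theorem and of the structure of $\mathcal{O}(\pi_1^{\textrm{rel}})$.
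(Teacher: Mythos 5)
Your route is essentially the one the paper takes: pull the geometry back to $X_1(6)$ via the parameter-to-modular-function identification $t \mapsto \tau$ from \cite{BV}, recognize the weight-graded pieces as objects of the form $\Sym^k(H^1(\mathcal{E}/X_1(6)))(r)$, and invoke Theorem~\ref{motivic theorem}. Where you diverge slightly is in taking $E$ to be the full rank-six $mot_G$ over $X_1(6)$, whereas the paper constructs a rank-three subobject $E$ by pulling back the Gysin/residue extension along the splitting $i : H^1(\mathcal{E})(-1)_{/X_1(6)} \to H^2(P^G \setminus \mathcal{E})_{/X_1(6)}$ that exists precisely in the equal-mass case (torsion intersections of $D \cap \mathcal{E}$). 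Both objects satisfy the filtration hypothesis, and $I^{\fm}_G$ is a motivic period of either, so your larger choice still lets the theorem go through; the paper's smaller $E$ simply gives a sharper description of the de Rham side (its rank matches the three-term equal-mass coaction) and makes the equal-mass reduction explicit rather than implicit. One small logical caveat in your write-up: you infer the filtration structure of $mot_G$ from the coaction formula, but the coaction formula was itself derived from that structure; it would be cleaner to cite the weight-graded computation of $gr^W mot_G$ (or the split diagram in §\ref{equal masses}) directly, as the paper does via the morphism $i$.
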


Since we know the mixed Hodge structure on $\mathcal{O}(\pi_1^{\textrm{rel}}(X_1(6),x,y)$ by \cite{Hain2} the above theorem implies that one can write down an antsaz for the sunrise motivic Feynman amplitude. This gives an alternative strategy to proving the results of \cite{BV,AW}, which also applies whenever the motivic Feynman amplitude can be expressed as a motivic period of an object satisfying the assumptions of the previous theorem.

\begin{subsection}*{Acknowledgements}
This work was supported by the Engineering and Physical Sciences Research Council [grant number EP/W020793/1]; and the European Research Council(ERC) under the European Union's Horizon 2020 research and innovation programme (grant agreement No. 724638). The author owes special thanks to Francis Brown and Erik Panzer for discussions. Thanks are also owed to Tiago Fonseca and Federico Zerbini.
\end{subsection}

\end{section}

\part{The motivic coaction on the sunrise}
\section{Motivic and de Rham periods of $H^1(\mathcal{E},\{P,Q\})$}

Let $k \subset \CC$ be a field, and let $\mathcal{E} \subset \mathbb{P}^2$ be a smooth elliptic curve defined over $k$. Let $P$ and $Q$ be two $k$-points on $\mathcal{E}$, and
\begin{equation}
\label{de Rham and Betti elliptic two points}
H_{\dR} = H^1_{\dR}(\mathcal{E},\{P,Q\}) \text{ and } H_{\Beta,{\varphi}} = H^1_{B}(\mathcal{E}_{\varphi},\{P,Q\}; \mathbb{Q})
\end{equation}
be algebraic de Rham and Betti cohomology respectively, where we define $\mathcal{E}_{\varphi} = \mathcal{E} \times_{k,\varphi} \CC$ for every embedding $\varphi: k \hookrightarrow \CC$\footnote{Note that the embedding data is necessary to define the comparison isomorphism since $H_{\dR}$ is a $k$-vector space which we must complexify, which depends on the embedding. Similarly, in order to speak of homology cycles we must take the complex points of $\mathcal{E}$, which depends on the embedding. This subtlety does not arise when the variety is defined over $\QQ$ since then we have a unique embedding. For its role in the study of Artin motives and Grothendieck's version of classical Galois theory see \cite[\S 5.1]{Brown1}.}. Note that $H^1(\mathcal{E},\{P,Q\})$ sits in a long exact sequence  
\begin{equation}
\label{ses}
0 \rightarrow \widetilde{H}^0(\{P,Q\}) \rightarrow H^1(\mathcal{E},\{P,Q\}) \rightarrow H^1(\mathcal{E}) \rightarrow 0
\end{equation}
where on the left we have reduced cohomology.

\subsection{de Rham realization}
\label{standard elliptic de rham realization}
Let $\mathcal{E}$ be given by the Weierstrass equation
$$
y^2z=x^3+axz^2+bz^3 , 
$$
for some $a,b \in k$. Then we may consider the following differential forms
$$
\omega = \frac{dx}{2y}, \eta = \frac{xdx}{2y}, \text{ and } \xi_{P,Q} =df.
$$ 
Here $\omega$ is a differential form of the first kind, $\eta$ is a differential form of the second kind with a double pole at $[0:1:0]$ and vanishing residue, and $f$ is a meromorphic function on $\mathcal{E}$ such that $f(P)-f(Q)=1$. By abuse of notation we denote by $[\omega]$ and $[\eta]$ the corresponding classes of $H_{\rm{dR}}^1(\mathcal{E},\{P,Q\})$ (see remark \ref{triple-complex remark} below for details). The class of the form $\xi_{P,Q}$ generates $\widetilde{H}^0_{\rm{dR}}(\{P,Q\})$, and we denote by $[\xi_{P,Q}]$ its image under the injective map $\widetilde{H}^0_{\rm{dR}}(\{P,Q\}) \rightarrow H_{\rm{dR}}^1(\mathcal{E},\{P,Q\})$. 

\begin{rmk}
\label{triple-complex remark}
Consider a covering $\mathcal{U}$ of $\mathbb{P}^2$ given by three affine pieces: $U_1$ given by $z \not = 0$, $U_2$ given by $y \not = 0$ and $U_3$ given by $x \not = 0$. In fact it will be enough to consider $U_1 \cup U_2$ since its complement in $\mathbb{P}^2$ is $(1:0:0)$ and this does not lie on our curve. Let the coordinates on $U_1$ be denoted by $(x,y,1)$, and coordinates on $U_2$ by $(u,1,v)$. The total complex of the relative algebraic \v Cech-de Rham triple complex below computes $H^1_{\rm{dR}}(\mathcal{E},\{P,Q\})$.

\begin{center}
\begin{tikzcd}[row sep=scriptsize, column sep=scriptsize,scale=0.7]
& 0 \arrow[rr] \arrow[from=dd] & & 0  \\
\Omega^1(U_1) \oplus \Omega^1(U_2) \arrow[ur] \arrow[rr, crossing over] & & \Omega^1(U_1 \cap U_2) \arrow[ur] \\
& \bigoplus_{i=1,2}\mathcal{O}(P \cap U_i) \oplus \mathcal{O}(Q \cap U_i) \arrow[rr]  & & 0 \arrow[uu] \\
\mathcal{O}(U_1) \oplus \mathcal{O}(U_2) \arrow[rr] \arrow[uu] \arrow[ur] & & \mathcal{O}(U_1 \cap U_2) \arrow[ur] \arrow[uu, crossing over]\\
\end{tikzcd}
\end{center}
In the diagram the vertical arrows are given by differentiation, the horizontal by taking the difference, and the diagonal by evaluating at $P,Q$. An element of $H^1_{dR}(\mathcal{E},\{P,Q\})$ is a class represented by a tuple in 
\begin{equation}
\label{touples}
\Omega^1(U_1) \oplus \Omega^1(U_2) \oplus \mathcal{O}(U_1 \cap U_2) \oplus \bigoplus_{i=1,2}\mathcal{O}(P \cap U_i) \oplus \mathcal{O}(Q \cap U_i)
\end{equation}
which satisfies the cocycle condition in the total complex. A basis of de Rham cohomology is given by the classes of the following cocycles
\begin{equation}
    \label{basis-triple-complex}
    \begin{split}
    & \left[\left(\frac{dx}{2y},\frac{du}{1-2auv-3bv^2},0,\ldots,0\right)\right],\\
    & \left[\left(\frac{xdx}{2y},\frac{udu}{v(1-2auv-3bv^2)} - d\left(\frac{1}{u^2}\right),\frac{1}{u^2},0\ldots,0\right)\right], \\
    & \left[(df,0,f|_{U_1\cap U_2},0,\ldots,0)\right],
    \end{split}
\end{equation}
which we denote by $[\omega], [\eta],$ and $[\xi_{P,Q}]$ respectively, where in the last tuple $f \in \mathcal{O}(U_1)$ such that $f(P)-f(Q)=1$. For example we can take $f = \frac{x-x(Q)}{x(P)-x(Q)}$ if the points $P,Q$ have different $x$ coordinates. The cocycle $\left[\xi_{P,Q}\right]$ is cohomologous to $(0,0,0,-1,0,0,0)$ where the $-1$ is at the position corresponding to $\mathcal{O}(P \cap U_1)$. 
\end{rmk}

\begin{rmk}
\label{choice-of-basis-remark}
A few things should be noted about our choice of basis of the de Rham realization. The homomorphism $\phi_1 : H^1_{\rm{dR}}(\mathcal{E}, \{P,Q\} ) \rightarrow H^1_{\rm{dR}}(\mathcal{E})$ is given by projection of the above triple complex to the double complex with arrows given by differentiation and taking the difference. The isomorphism $ \phi_2: H_{dR}^1(\mathcal{E}) \rightarrow H_{dR}^1(\mathcal{E} \setminus [0:1:0])$ is induced by sending a tuple in the resulting double complex to its first entry. Now, the space of global 1-forms, i.e. differentials of the first kind, $H^0(\mathcal{E},\Omega^1_{\mathcal{E}/k})$, is 1-dimensional over $k$ and injects into $ H^1_{\rm{dR}}(\mathcal{E})$. For each differential of the first kind $\omega$ the lift of $[\omega] \in H^1_{\rm{dR}}(\mathcal{E})$ is not well defined in $H^1_{\rm{dR}}(\mathcal{E}, \{P,Q\})$ since we have that $\phi_1([\omega + \lambda \xi_{P,Q}]) = [\omega]$ for any $\lambda \in k$. Similarly we have to make a choice when
lifting $[\eta] \in H^1_{\rm{dR}}(\mathcal{E})$. These choices are made in \eqref{basis-triple-complex}. Furthermore, given a form of the second kind adding to it a $k$-multiple of a form of the first kind results in another form of the second kind. The choice of a form of the second kind corresponds to choosing a splitting of the Hodge filtration
$$
0 \subset F^1H^1_{\rm{dR}}(\mathcal{E}) \subset F^0H^1_{\rm{dR}}(\mathcal{E}) = H^1_{\rm{dR}}(\mathcal{E}) .
$$
There is no canonical splitting of the Hodge filtration for (families of) elliptic curves, and we will make a choice for each elliptic curve where we wish to work with a basis of de Rham cohomology explicitly.
\end{rmk}

\subsection{Betti realization}
A set of generators of the dual of the Betti realization $H_{B,{\varphi}}$, i.e. the relative Betti homology $H_1^{\Beta}(\mathcal{E},\{P,Q\};\QQ)$, is given by the classes 
$$
[\alpha], [\beta], \text{ and } [\gamma_{P,Q}],
$$
where $\alpha$ and $\beta$ are the two standard closed loops on the elliptic curve $\mathcal{E}$, and $\gamma_{P,Q}$ is a path from $P$ to $Q$ on the elliptic curve. Note that $[\gamma_{P,Q}]$ is a chosen lift of a generator of $\tilde{H}_0^{\Beta}(\{P,Q\};\QQ)$ in
$$
0 \rightarrow H_1^{\Beta}(\mathcal{E};\QQ) \rightarrow H_1^{\Beta}(\mathcal{E},\{P,Q\};\QQ) \xrightarrow{\partial} \tilde{H}_0^{\Beta}(\{P,Q\};\QQ) \rightarrow 0 .
$$
As such it is unique up to adding rational multiples of $\alpha$ and $\beta$.

\subsection{Periods}
Consider the function
\begin{equation*}
\begin{split}
{\rm{exp}}_{\mathcal{E}} : \mathbb{C} & \longrightarrow \mathcal{E} \subset \mathbb{P}^2(\mathbb{C}) \\
z & \longmapsto [\wp(z):\wp'(z):1],
\end{split}
\end{equation*}
 Note that ${\rm{exp}}_{\mathcal{E}}^*\left(\frac{dx}{2y}\right) = dz$ and ${\rm{exp}}_{\mathcal{E}}^*\left(\frac{xdx}{2y}\right) = -\wp(z)dz = d\zeta(z)$, where $\zeta(z)$ is the Weierstrass zeta function. The comparison isomorphism $$c_{\sigma} : H^1_{\dR}(\mathcal{E},\{P,Q\}) \cong H^1_{\Beta}(\mathcal{E},\{P,Q\};\QQ)$$ is given by integration, which in our case produces the period matrix
\begin{equation}
\label{period-matrix-1}
P_{\mathcal{E},P,Q} = \begin{blockarray}{cccc}
[\omega] & [\eta] & [\xi_{P,Q}] \\
\begin{block}{(ccc)c}
  \omega_1 & \eta_1 & 0 & [\alpha] \\
  \omega_2 & \eta_2 & 0 & [\beta] \\
  z_1-z_2 & \zeta(z_1) - \zeta(z_2) & 1 & [\gamma_{P,Q}], \\
\end{block}
\end{blockarray}
\end{equation}
where $\omega_1$ and $\omega_2$ are the numbers classically known as the periods of the elliptic curve $\mathcal{E}$, while $\eta_1$ and $\eta_2$ are usually called the quasi-periods of $\mathcal{E}$. The morphsim ${\rm{exp}}_{\mathcal{E}}$ restricts to a complex analytic isomorphism on $\mathbb{C}/\Lambda$, where $\Lambda$ is the lattice spanned the periods. We define $z_1,z_2$ to be the points in the preimage of $P$ and $Q$ respectively under this isomorphism.

\begin{subsection}{Elliptic motivic and de Rham periods}
We are interested in the following object
$$
H^1(\mathcal{E},\{P,Q\}) = ((H_{\Beta,{\varphi}})_{\varphi}, H_{\dR}, (c_{\varphi})_{\varphi}) \in \textrm{Ob}(\mathcal{H}_k) ,
$$
where $H_{\Beta,{\varphi}}$ and $H_{\dR}$ are defined in \eqref{de Rham and Betti elliptic two points}, and $\varphi: k \hookrightarrow \CC$ are embeddings of $k$ and $\mathcal{H}_k = \mathcal{H}(\Sp(k))$.

\begin{defi}
\label{elliptic-motivic-periods}
Let $[\omega],[\eta] \in H_{\dR}$ be the classes of a form of first kind and the second kind respectively, and let $\{[\alpha],[\beta]\}$ be a basis of the Betti homology $H^{\Beta}_1(\mathcal{E};\QQ)$. We define the following motivic periods:
\begin{equation}
\begin{split}
\label{motivic-periods-elliptic-first-kind}
  K_1^{\fm} = \left[H^1(\mathcal{E}), [\alpha] ,[\omega] \right]^{\fm}, \quad K_2^{\fm} = \left[H^1(\mathcal{E}), [\beta] ,[\omega] \right]^{\fm} \, ,
\end{split}
\end{equation}
as well as
\begin{equation}
\begin{split}
\label{motivic-periods-elliptic-second-kind}
 E_{1,\eta}^{\fm} = \left[H^1(\mathcal{E}), [\alpha] ,[\eta] \right]^{\fm}, \quad  E_{2,\eta}^{\fm} = \left[H^1(\mathcal{E}), [\beta] ,[\eta] \right]^{\fm}.
\end{split}
\end{equation}
Note that $K_1^{\fm}, K_2^{\fm}$ are associated to $\mathcal{E}$ canonically, up to a multiplicative constant in $k$, since this is true of the differential form of the first kind $\omega$. On the other hand, $E_{1,\eta}^{\fm}$ and $E_{2,\eta}^{\fm}$ depend on the choice of a form of the second kind $\eta$, as these are defined up to adding a multiple of the form of the first kind.

Furthermore, given two $k$-points $P,Q$, and a path $\gamma_{P,Q}$ between then, we define the following two motivic periods of $H^1(\mathcal{E},\{P,Q\})$:
\begin{equation}
\begin{split}
\label{motivic-periods-elliptic-incomplete}
F_{\gamma_{P,Q}}^{\fm} = \left[H^1(\mathcal{E},\{P,Q\}), [\gamma_{P,Q}] ,[\omega] \right]^{\fm}, \\
E_{\gamma_{P,Q},\eta}^{\fm} = \left[H^1(\mathcal{E},\{P,Q\}), [\gamma_{P,Q}] ,[\eta] \right]^{\fm}.
\end{split}
\end{equation}
We refer to $K_1^{\fm},K_{2}^{\fm}$ as \textit{motivic complete elliptic integrals of the first kind}, $E_{1,\eta}^{\fm},E_{2,\eta}^{\fm}$ as \textit{motivic complete elliptic integrals of the second kind}, and $F_{\gamma_{P,Q}}^{\fm}$ and $E_{\gamma_{P,Q},\eta}^{\fm}$ as \textit{motivic incomplete elliptic integrals of the first and second kind} respectively.
\end{defi}

\begin{defi}
\label{elliptic-dr-periods-defi}
For the basis of $H_{\dR}$ as in the previous definition we have the dual basis $\{[\omega]^{\vee}, [\eta]^{\vee}\}$. Define
\begin{equation}
\begin{split}
\label{dr-periods-elliptic-first-kind}
  & K_1^{\fdr} = \left[H^1(\mathcal{E}), [\omega]^{\vee} ,[\omega] \right]^{\fdr}, \quad K_{2,\eta}^{\fdr} = \left[H^1(\mathcal{E}), [\eta]^{\vee} ,[\omega] \right]^{\fdr} \\
  & E_{1,\eta}^{\fdr} = \left[H^1(\mathcal{E}), [\omega]^{\vee} ,[\eta] \right]^{\fdr}, \quad E_{2,\eta}^{\fdr} = \left[H^1(\mathcal{E}), [\eta]^{\vee} ,[\eta] \right]^{\fdr} .
\end{split}
\end{equation}
Note that $K_1^{\fdr}$ is associated to $\mathcal{E}$ canonically, up to a constant in $k$, but $K_{2,\eta}^{\fdr}, E_{1,\eta}^{\fdr},E_{2,\eta}^{\fdr}$ depend on the choice of a form of the second kind $\eta$.

Given two $k$-points $P,Q$ on $\mathcal{E}$, we can complete the basis of $H^1_{\rm{dR}}(\mathcal{E},\{P,Q\})$ by adding $[df]$ to $\{[\omega],[\eta]\}$, for $f$ a meromorphic function on $\mathcal{E}$ such that $f(P)-f(Q)=1$. We denote the dual basis by $\{[\omega]^{\vee},[\eta]^{\vee},[df]^{\vee}\}$. Then define
\begin{equation}
\begin{split}
\label{dr-periods-elliptic-incomplete}
F_{P,Q}^{\fdr} = \left[H^1(\mathcal{E},\{P,Q\}), [df]^{\vee} ,[\omega] \right]^{\fdr}, \\
E_{P,Q,\eta}^{\fdr} = \left[H^1(\mathcal{E},\{P,Q\}), [df]^{\vee} ,[\eta] \right]^{\fdr}.
\end{split}
\end{equation}
We refer to $K_1^{\fdr},K_{2,\eta}^{\fdr}$ as \textit{de Rham complete elliptic integrals of the first kind}, $E_{1,\eta}^{\fdr},E_{2,\eta}^{\fdr}$ as \textit{de Rham complete elliptic integrals of the second kind}, and $F_{P,Q}^{\fdr}$ and $E_{P,Q,\eta}^{\fdr}$ as \textit{de Rham incomplete elliptic integrals of the first and second kind} respectively. 
\end{defi}

\begin{rmk}
The above definitions are stated for an elliptic curve over $k$, i.e. $\mathcal{E} \rightarrow \Sp(k)$, and two $k$-points on it. They naturally generalize to families of elliptic curves $\pi: \mathcal{E} \rightarrow S$ over a more general base scheme $S$, along with a family of divisors $\{P,Q\}$. When we have such a dependence on parameters we consider the object $H^1(\mathcal{E})_{/S} = (H^1_{\Beta}(\mathcal{E})_{/S},H^1_{\dR}(\mathcal{E})_{/S},c)$ of $\mathcal{H}(S)$, where $H^1_{\Beta}(\mathcal{E})_{/S} = R^1\pi_*\mathbb{Q}$ is the local system, under appropriate constraints on $\pi$, whose fiber at $t = (t_1,\ldots,t_n)$ is the Betti cohomology of $\mathcal{E}_t$ with rational coefficients, and $H^1_{\dR}(\mathcal{E})_{/S}$ is the vector bundle $R^1\pi_*\Omega^{\bullet}_{X/S}$ whose fiber is the algebraic de Rham cohomology of $\mathcal{E}_t$. We define
\begin{equation}
\begin{split}
  K_1^{\fm}(t) = \left[H^1(\mathcal{E})_{/S}, [\alpha] ,[\omega] \right]^{\fm}, \\
  K_2^{\fm}(t) = \left[H^1(\mathcal{E})_{/S}, [\beta] ,[\omega] \right]^{\fm}.
  \end{split}
\end{equation}
  We make analogous definitions for the remaining motivic and de Rham periods defined in \ref{elliptic-motivic-periods} and \ref{elliptic-dr-periods-defi}. The elliptic curve, or the family of elliptic curves, we associate motivic and de Rham periods to will be suppressed from the notation when it is evident from context to avoid cluttering.
\end{rmk}

We can apply the general coaction formula to the previously defined motivic periods, e.g.
\begin{equation}
\begin{split}
& \Delta(K_1^{\fm}) = K_1^{\fm} \otimes K_1^{\fdr} + E_{1,\eta}^{\fm} \otimes K_{2,\eta}^{\fdr}. \\
& \Delta(F_{\gamma_{P,Q}}^{\fm}) = F_{\gamma_{P,Q}}^{\fm} \otimes K_1^{\fdr} + E_{\gamma_{P,Q},\eta}^{\fm} \otimes K_{2,\eta}^{\fdr} + 1 \otimes F_{P,Q}^{\fdr}.
\end{split}
\end{equation}
Alternatively, we can write the coaction in matrix form:
\begin{equation*}
\Delta \, 
\begin{pmatrix}
  K_1^{\fm} & E_{1,\eta}^{\fm} & 0 \\
  K_2^{\fm} & E_{2,\eta}^{\fm} & 0\\
  F_{\gamma_{P,Q}}^{\fm} & E_{\gamma_{P,Q},\eta}^{\fm} & 1 & \\
\end{pmatrix} = 
\begin{pmatrix}
  K_1^{\fm} & E_{1,\eta}^{\fm} & 0 \\
  K_2^{\fm} & E_{2,\eta}^{\fm} & 0\\
  F_{\gamma_{P,Q}}^{\fm} & E_{\gamma_{P,Q},\eta}^{\fm} & 1 & \\
\end{pmatrix} \otimes
\begin{pmatrix}
  K_1^{\fdr} & E_{1,\eta}^{\fdr} & 0 \\
  K_{2,\eta}^{\fdr} & E_{2,\eta}^{\fdr} & 0\\
  F_{P,Q}^{\fdr} & E_{P,Q,\eta}^{\fdr} & 1 & \\
\end{pmatrix}.
\end{equation*}

\end{subsection}
\begin{subsection}{Single-valued periods}
Let us consider the category $\mathcal{H}(\QQ)$ and extend its definition to include for each object $V = (V_{\Beta},V_{\dR},c)$ a linear involution $F_{\infty} : V_{\Beta} \xrightarrow{\sim} V_{\Beta}$ such that
$$
\xymatrix{
            V_{\dR} \otimes \CC \ar[d]^{c_{\dR}} \ar[r]^{c}& V_{\Beta} \otimes \CC \ar[d]^{F_{\infty}\otimes c_{\Beta}}\\
            V_{\dR} \otimes \CC \ar[r]^{c} & V_{\Beta} \otimes \CC
            },
$$
where $c_{\dR}$ and $c_{\Beta}$ are defined by $x \otimes \lambda \mapsto x \otimes \overline{\lambda}$, commutes.
We can then consider, in addition to the comparison isomorphism $c$, the isomorphism given by the composition
$$
c_{\textrm{s}} : V_{\dR} \otimes \CC \xrightarrow{c} V_{\Beta} \otimes \CC \xrightarrow{F_{\infty} \otimes id} V_{\Beta} \otimes \CC \xrightarrow{c^{-1}} V_{\dR} \otimes \CC .
$$
We comparison $c$ yields the period homomorphism, and similarly $c_{\textrm{s}}$ yields the \textit{single-valued period homomorphism}:
$\mathrm{s} : \mathcal{P}^{\fdr}_{\mathcal{H}(S)} \rightarrow \mathbb{R}$ sending $\left[V,f,\omega\right]$ to $f(c_{\textrm{s}}(\omega))$. The single-valued period matrix, i.e. the matrix corresponding to the isomorphism $c_{\textrm{s}}$, is given by multiplying the period matrix with its conjugate inverse on the left. The single-valued period homomorphism for families of de Rham periods over a base scheme $S$ is defined in appendix \ref{single-valued-periods definition}. It is a homomorphism 
$$
\mathrm{s} : \mathcal{P}^{\fdr}_{\mathcal{H}(S)} \rightarrow M(S(\CC)) \otimes \overline{M}(S(\CC)), 
$$ 
for $M_{X,Y}(S(\CC))$ the ring of multivalued meromorphic functions on $S(\CC)$ and $\overline{M}(S(\CC))$ the ring of quotients of antiholomorphic functions on $S(\CC)$.

Let $\mathcal{E} \rightarrow \mathcal{M}_{1,1}$ be the universal elliptic curve, and $P_{\mathcal{E}}$ be the period matrix of $H^1(\mathcal{E})$ with respect to the basis $\{[\omega],[\eta]\}$ of $H^1_{\dR}(\mathcal{E})$ and $\{[\alpha],[\beta]\}$ of $H^{\Beta}_1(\mathcal{E};\QQ)$ defined in \ref{standard elliptic de rham realization}. Let $\tau = \frac{\omega_2}{\omega_1}$, $\lambda = \frac{\omega_1}{2\pi i}$, and let $
\mathbb{G}^*_2(\tau) = \mathbb{G}_2(\tau) + \frac{1}{8\pi\Imm{\tau}}
$, where 
$$
\mathbb{G}_2(\tau) = -\frac{1}{24}+\sum_{n \geq 1}\sigma_1(n)q^n.
$$
The function $\mathbb{G}^*_2(\tau)$ is the modified real analytic version of the Eisenstein series of weight 2, which transforms like a modular form of weight 2. Here $\sigma_1$ is the divisor sum function. As pointed out in \cite[\S 6.2]{BD} we obtain the single-valued period matrix of $H^1(\mathcal{E})$
$$
 \overline{P_{\mathcal{E}}}^{-1}P_{\mathcal{E}} = 
\begin{pmatrix}
f_{1,1}(\tau) & f_{1,2}(\tau)
 \\
f_{2,1}(\tau) & f_{2,2}(\tau)
\end{pmatrix},
$$
where the entries of the matrix are given by
\begin{equation}
\label{f_is}
\begin{split}
&f_{1,1}(\tau) = -\lambda\overline{\lambda}^{-1}8\pi\Imm(\tau)\overline{\mathbb{G}^*_2(\tau)} \\ &f_{1,2}(\tau) = \lambda^{-1}\overline{\lambda}^{-1}(4\pi\Imm{\tau})^{-1}((-8\pi\Imm(\tau))^2\mathbb{G}^*_2(\tau)\overline{\mathbb{G}^*_2(\tau)}-1) \\
& f_{2,1}(\tau) =-\lambda\overline{\lambda}4\pi\Imm{\tau}, \quad f_{2,2}(\tau) = -\lambda^{-1}\overline{\lambda}-8\pi\Imm(\tau)\mathbb{G}^*_2(\tau).
\end{split}
\end{equation}
This follows from the relations shown by Fricke and Legendre respectively
$$
\mathbb{G}_2(\tau) = -\frac{1}{2}\frac{\omega_1\eta_1}{(2\pi i )^2} \quad \text{ and } \quad \omega_1\eta_2 - \eta_1\omega_2 = 2 \pi i .
$$
By the definition of the single-valued map we have
\begin{equation}
\begin{split}
&\mathrm{s}(K_1^{\fdr}(\tau))=f_{1,1}(\tau), \quad \mathrm{s}(K_{2,\eta}^{\fdr}(\tau))= f_{1,2}(\tau)\\
& \mathrm{s}(E_{1,\eta}^{\fdr}(\tau)) = f_{2,1}(\tau) , \quad\mathrm{s}(E_{2,\eta}^{\fdr}(\tau)) = f_{2,2}(\tau) .
\end{split}
\end{equation}
Let and $r,s$ be the powers of the prefactors $\lambda$ and $\overline{\lambda}$ in \eqref{f_is} respectively. Then these functions transform as
$$
f(\gamma \tau) = (c\tau + d)^{-r}(c\overline{\tau} + d)^{-s}f(\tau),
$$
for each $\gamma = \begin{pmatrix}
a & b \\
c & d
\end{pmatrix} \in \textrm{SL}_2(\mathbb{Z})$.
\begin{rmk}
\label{two-weights-fund-group-remark}
The above single-valued functions are examples of Brown's real analytic modular forms with two weights which are generally defined by replacing the cohomology of $\mathcal{E}$ above with its unipotent fundamental group  --- see \cite{Brown4} and sequels. These are functions on the upper half plane which satisfy a modular transformation property such as the one in the previous equation, for arbitrary $(r,s)$, and which have an expansion of the form
$$
f(\tau) = \sum_{k=-N}^{N} \sum_{m,n \geq 0}a_{m,n}^{(k)}(-2\pi\Imm{\tau})^kq_{\tau}^m\overline{q_{\tau}}^n,
$$
where $q_{\tau} = e^{2\pi i \tau}$.  They are generalized by the single-valued functions we define next.
\end{rmk}

Now let us consider the object $H^1(\mathcal{E},\{P,Q\})$, and its period matrix $P_{\mathcal{E},P,Q}$. We have two de Rham periods: $F^{\fdr}_{P,Q}(\tau)$ and $E^{\fdr}_{P,Q,\eta}(\tau)$, which do not appear as de Rham periods of $H^1(\mathcal{E})$. We summarise the properties of their single-valued periods.
\begin{prop}
\label{s-v-jacobi-forms}
The single-valued periods associated to $F^{\fdr}_{P,Q}(\tau)$ and $E^{\fdr}_{P,Q,\eta}(\tau)$ are
\begin{equation}
\begin{split}
    & \sv\left(F^{\fdr}_{P,Q}(\tau)\right)=\overline{(\zeta(z_1)-\zeta(z_2))}f_{2,1}(\tau) + (\overline{z_1}-\overline{z_2})f_{1,1}(\tau) + 2\pi i(z_1 - z_2) \\
    & \sv\left(E^{\fdr}_{P,Q,\eta}(\tau)\right)=\overline{(\zeta(z_1)-\zeta(z_2))}f_{2,2}(\tau) + (\overline{z_1}-\overline{z_2})f_{1,2}(\tau) + 2\pi i(\zeta(z_1) - \zeta(z_2)) .
\end{split}
\end{equation}
Denote the two functions by $f_{3,1}(\tau,z_1,z_2)$ and $f_{3,2}(\tau,z_1,z_2)$ respectively. Let $\gamma = \begin{pmatrix}
a & b \\
c & d
\end{pmatrix} \in \textrm{SL}_2(\mathbb{Z})$. Then these functions transform as
\begin{equation}
\begin{split}
    & f_{3,1}\left(\frac{a\tau + b}{c\tau + d},\frac{z_1}{c\tau + d},\frac{z_2}{c\tau + d}\right) = (c\tau + d)^{-1}f_{3,1}(\tau,z_1,z_2) \\
    & f_{3,2}\left(\frac{a\tau + b}{c\tau + d},\frac{z_1}{c\tau + d},\frac{z_2}{c\tau + d}\right) = (c\tau + d)f_{3,2}(\tau,z_1,z_2).
\end{split}
\end{equation}
They have expansions of the form
$$
f(\tau,z_1,z_2) = \sum_{-N \leq k_1,k_2,k_3 \leq N} \sum_{\substack{m_i,n_i \geq 0 \\ 1 \leq i \leq 3}}a_{m,n}^{(k_1,k_2,k_3)}z_1^{k_1}z_2^{k_2}(-2\pi\Imm{\tau})^{k_3}q_{i}^{m_i}\overline{q_i}^{n_i},
$$
where $q_1 = e^{2\pi i \tau},q_2 = e^{2\pi i z_1}, q_3 = e^{2\pi i z_2}$, and $z_1,z_2$ are in a compact set not containing any points of the lattice generated by $1$ and $\tau$.
\end{prop}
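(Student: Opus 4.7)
The plan is to obtain the two closed-form expressions by reading them off the single-valued period matrix $\overline{P_{\mathcal{E},P,Q}}^{-1}P_{\mathcal{E},P,Q}$, and then to track how the resulting quantities behave under $\mathrm{SL}_2(\mathbb{Z})$ and how they admit a $q$-expansion.

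\textbf{Step 1 (single-valued matrix).} The period matrix \eqref{period-matrix-1} has the block-triangular shape
$$
P_{\mathcal{E},P,Q} \;=\; \begin{pmatrix} A & 0 \\ b^{T} & 1 \end{pmatrix}, \qquad A = \begin{pmatrix} \omega_1 & \eta_1 \\ \omega_2 & \eta_2 \end{pmatrix}, \quad b = \begin{pmatrix} z_1-z_2 \\ \zeta(z_1)-\zeta(z_2) \end{pmatrix},
$$
so the standard block-inverse formula yields
$$
\overline{P_{\mathcal{E},P,Q}}^{-1}P_{\mathcal{E},P,Q} \;=\; \begin{pmatrix} \overline{A}^{-1}A & 0 \\ b^{T}-\overline{b}^{T}\overline{A}^{-1}A & 1 \end{pmatrix}.
$$
The upper-left $2\times 2$ block is exactly the single-valued matrix of $H^1(\mathcal{E})$ whose entries $f_{i,j}$ are listed in \eqref{f_is}, with the Legendre relation $\omega_1\eta_2-\omega_2\eta_1=2\pi i$ providing the sole nontrivial input through $\det A = 2\pi i$ and $\det\overline{A}=-2\pi i$. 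By the definition of $\sv$, the $(3,1)$ and $(3,2)$ entries of this matrix are $\sv(F^{\fdr}_{P,Q})$ and $\sv(E^{\fdr}_{P,Q,\eta})$ respectively; expanding $b^{T}-\overline{b}^{T}\overline{A}^{-1}A$ in the basis $\{[\omega],[\eta],[\xi_{P,Q}]\}$ and substituting the $f_{i,j}$ produces the displayed formulas, with the $2\pi i$ prefactor arising from $(\det A)^{-1}$.

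\textbf{Step 2 (transformation law).} Under $\gamma=\left(\begin{smallmatrix} a & b \\ c & d \end{smallmatrix}\right)\in\mathrm{SL}_2(\mathbb{Z})$ the periods transform as $(\omega_1,\omega_2)^{T}\mapsto\gamma(\omega_1,\omega_2)^{T}$, inducing $\tau\mapsto\gamma\tau$ and, after rescaling to the convention $\omega_1=1$ used in \eqref{f_is}, $z_i\mapsto z_i/(c\tau+d)$. The weights of the $f_{i,j}$ are recorded; it remains to track $z_1-z_2$, which scales by $(c\tau+d)^{-1}$, and $\zeta(z_1)-\zeta(z_2)$, for which I would use the quasi-periodicity $\zeta(z+\omega_i)=\zeta(z)+\eta_i$ together with the change of lattice $\Lambda_{\gamma\tau} = (c\tau+d)^{-1}\Lambda_{\tau}$. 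The resulting transformation of $\zeta_{\gamma\tau}$ differs from $(c\tau+d)\zeta_\tau$ by a linear combination of $\eta_i/(c\tau+d)$ terms. Crucially, these $\eta_i$-cocycle contributions cancel against the analogous cocycle in the transformation of $f_{2,1},f_{2,2}$, by the Legendre relation applied once more, giving the homogeneous weights $-1$ for $\sv(F^{\fdr}_{P,Q})$ and $+1$ for $\sv(E^{\fdr}_{P,Q,\eta})$.

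\textbf{Step 3 (expansion) and main obstacle.} Substituting the standard $q$-expansion of $\mathbb{G}_2^{*}(\tau)$ — which accounts for the $(-2\pi\Imm\tau)^{k_3}$ and $q_1^{m_1}\overline{q_1}^{n_1}$ factors in the $f_{i,j}$ — and the Laurent/$q$-expansion of the Weierstrass $\zeta$-function at $z_1,z_2$ lying in a compact subset of $\CC\setminus(\ZZ+\ZZ\tau)$ — which produces polynomial factors in $z_1,z_2$ together with convergent series in $q_1,q_2=e^{2\pi i z_1}, q_3=e^{2\pi i z_2}$ — and multiplying out in the formulas of Step 1, yields the claimed finite sum with bounded exponents $-N\leq k_1,k_2,k_3\leq N$. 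The only step that is not mechanical is the cocycle cancellation in Step 2: one must verify that the $\eta_i$-terms introduced by the modular transformation of $\zeta$ really do cancel those coming from the transformation of $f_{2,j}$, and this is where the Legendre relation enters for the third and decisive time. The remainder is bookkeeping with the explicit formulas produced by the block inversion.
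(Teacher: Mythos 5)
Your Steps 1 and 3 match the paper's argument, but Step 2 contains a conceptual error. You expect $\eta_i$-cocycle contributions from the modular transformation of the Weierstrass zeta function and a delicate cancellation against the transformation of $f_{2,j}$ mediated by the Legendre relation. There is no such cocycle and no such cancellation. The $\mathrm{SL}_2(\mathbb{Z})$ action sends the lattice $\Lambda_\tau = \mathbb{Z}\tau + \mathbb{Z}$ to $\Lambda_{\gamma\tau} = (c\tau+d)^{-1}\Lambda_\tau$, a pure rescaling, and simultaneously sends $z \mapsto z/(c\tau+d)$; since the Weierstrass $\zeta$-function is anti-homogeneous of degree $-1$, one gets exactly
$$
\zeta\left(\frac{a\tau+b}{c\tau+d},\frac{z}{c\tau+d}\right) = (c\tau+d)\,\zeta(\tau,z),
$$
with no extra linear terms. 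The quasi-periodicity $\zeta(z+\omega_i) = \zeta(z) + \eta_i$ is the cocycle for a \emph{translation} of $z$ by a lattice vector, which never occurs here because the argument is rescaled exactly in step with the lattice. This is precisely the identity the paper invokes, and with it each of the three terms in the displayed formula for $\mathrm{s}(F^{\fdr}_{P,Q})$ (and likewise for $\mathrm{s}(E^{\fdr}_{P,Q,\eta})$) is already separately homogeneous of the claimed weight, with no interaction between them. The Legendre relation is used once, in producing the $f_{i,j}$ and the $2\pi i$ prefactor, not a second or third time. Your "main obstacle" is therefore a phantom; if you had carried out the computation you outline, you would have found the $\eta_i$-terms identically zero, but the mental model driving Step 2 is wrong and would likely have sent you down a blind alley or caused an error when combining contributions.
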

\begin{proof}
This follows from \eqref{single-valued-matrix definition} by computing the matrix $(\overline{P_{\mathcal{E},P,Q}})^{-1}P_{\mathcal{E},P,Q}$, where $P_{\mathcal{E},P,Q}$ is defined in \eqref{period-matrix-1}, and noting that the Weierstrass zeta function transforms as 
$$
\zeta\left(\frac{a\tau + b}{c\tau + d},\frac{z}{c\tau + d}\right) = (c\tau + d)\zeta(\tau,z).
$$
We have assumed so far that the Weierstrass zeta function is defined with respect to the lattice generated by $(\tau,1)$, and here we have written the dependence explicitly. Moreover, $z_1,z_2$ are also defined with respect to the lattice generated by $(\tau,1)$, and correspond to $\frac{z_1}{c\tau+d},\frac{z_2}{c\tau+d}$ with respect to the generators of the lattice given by $\gamma(\tau,1)^T$. The expansion follows from the previous computation for $f_{1,1},f_{1,2},f_{2,1}$ and $f_{2,2}$ as well as the well known expansion of the Weierestrass zeta function (see \cite[Ch. XVIII \S 3]{LangEF})
$$
\zeta(\tau,z) = \eta_2z+\pi i \frac{q_z+1}{q_z-1}+2\pi i\sum_{n \geq 1}\frac{q_{\tau}^n/q_z}{1-q_{\tau}q_z} - \frac{q_{\tau}^nq_z}{1-q_{\tau}^nq_z},
$$
where $q_z = e^{2\pi i z}$, where we then expand the terms $\frac{-1}{1 - q_z}, \frac{1}{1-q_{\tau}q_z}$, and $\frac{1}{1-q_{\tau}^nq_z}$.
\end{proof}
\end{subsection}

\section{Subdivision of edges}
\label{section: subdivison of edges}

\begin{defi}
Given a Feynman graph $G$ and an internal edge $e \in E_G$ connected to vertices $v_{e,1},v_{e,2} \in V_G$, we denote by $G_{s(e)}$ the graph obtained by adding a vertex $v$ and two edges $e_1,e_2$ to $G \setminus e$ such that $e_1$ is connected to $v_{e,1}$ and $v$, and $e_2$ is connected to $v$ and $v_{e,2}$. If $e$ has a non-zero mass $m_e$ in $G$ then both edges $e_1$ and $e_2$ have mass $m_e$ in $G_{s(e)}$, otherwise $e_1$ and $e_2$ have zero mass. We say that $G_{s(e)}$ is obtained by subdividing $e$ in $G$. Define $G_{s(I)}$, where $I=\{e_1^{k_1},\ldots,e_{N_{G}}^{k_{N_{G}}}\}$ for each $k_i \geq 0$ an integer, to be the graph obtained by subdividing $k_i$ times the edge $e_i$ for $ 1 \leq i \leq N_{G}$.
\end{defi}

We will order the edges of $G$ so that the parameter $\alpha_{N_G}$ corresponds to the edge $e$ in $E_G$, and that $\alpha_{N_G},\alpha_{N_G+1}$ correspond to the edges $e_1,e_2$ in $E_{G_{s(e)}}$ obtained by subdividing $e$. For a given graph $G$ we will consider the following morphism
\begin{equation}
\label{rho definiton}
    \begin{split}
    \rho : \,\, &\mathbb{A}^1 \times \mathbb{P}^{N_G-1} \longrightarrow \mathbb{P}^{N_G}\\
    & (y,\alpha_1,\ldots,\alpha_{N_G}) \mapsto (\alpha_1,\ldots,y\alpha_{N_G},(1-y)\alpha_{N_G}).
    \end{split}
\end{equation}

\begin{lemma}
\label{pullback of dotted no blowup}
Let $X_{\Xi_G}=V(\Xi_G)$ and $X_{\Psi_G}=V(\Psi_G)$ be the graph hypersurfaces associated to $G$. Then 
$$
\rho^*(X_{\Xi_{G_{s(e)}}}) = X_{\Xi_G}\quad \text{and } \quad \rho^*(X_{\Psi_{G_{s(e)}}}) = X_{\Psi_G}.
$$
Moreover, we have
$$
\rho^*\left(\omega_{G_{s(e)}}\right) = - \alpha_{N_G}\psi_G^{z_1}\Xi_G^{z_2}\omega_G\wedge dy,
$$
where $z_1 = 1-(h_G + 1)d/2$ and $ z_2 = dh_G/2-1$, for $d=d_{G_{s(e)}}-d_G$ the difference between the space-time dimension for $G_{s(e)}$ and $G$.
\end{lemma}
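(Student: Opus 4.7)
The plan is to prove the three assertions in sequence: the two hypersurface equalities via a combinatorial analysis of spanning trees and $2$-trees of $G_{s(e)}$, and the identity on volume forms by a direct computation of $\rho^*(\Omega_{G_{s(e)}})$.

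For $\rho^*(\Psi_{G_{s(e)}}) = \Psi_G$, I would first classify spanning trees $T'$ of $G_{s(e)}$. Since the new vertex $v$ has only the two incident edges $e_1, e_2$, every spanning tree contains at least one of them; two cases arise: (a) $\{e_1,e_2\}\subset T'$, in bijection with spanning trees of $G$ containing $e$; and (b) exactly one of $e_1,e_2$ lies in $T'$, giving a $2$-to-$1$ correspondence with spanning trees of $G$ not containing $e$. The $\Psi$-contribution in case (a) matches its $G$-counterpart verbatim, while in case (b) it is of the form $\alpha_{e_j}\prod_{e'\in E_G\setminus T\setminus\{e\}}\alpha_{e'}$; applying $\rho^*$ and using $\rho^*(\alpha_{e_1}+\alpha_{e_2}) = \alpha_{N_G}$ collapses the sum over case (b) to exactly the terms of $\Psi_G$ with $e\notin T$. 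The analogous analysis of spanning $2$-trees of $G_{s(e)}$ proves $\rho^*(\Xi_{G_{s(e)}})=\Xi_G$: the new vertex $v$ carries no external momentum, so placing it in either component of a $2$-tree leaves $(q^{T_1})^2$ unchanged, and the extra mass contribution $m_e^2(\alpha_{e_1}+\alpha_{e_2})\Psi_{G_{s(e)}}$ pulls back to $m_e^2\alpha_{N_G}\Psi_G$. Both hypersurface equalities follow immediately.

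For the volume-form identity, the essential computation is
\[
\rho^*(d\alpha_{e_1})\wedge \rho^*(d\alpha_{e_2}) = (y\,d\alpha_{N_G}+\alpha_{N_G}\,dy)\wedge((1-y)\,d\alpha_{N_G}-\alpha_{N_G}\,dy) = -\alpha_{N_G}\,d\alpha_{N_G}\wedge dy.
\]
For $i<N_G$, the pullback of the $i$-th summand of $\Omega_{G_{s(e)}}$ contains exactly this factor and therefore equals $-\alpha_{N_G}$ times the $i$-th summand of $\Omega_G$, wedged on the right with $dy$. For the two remaining summands $i=N_G$ and $i=N_G+1$ of $\Omega_{G_{s(e)}}$, expanding $\rho^*(d\alpha_{e_j})$ produces four terms in total: the two $d\alpha_1\wedge\cdots\wedge d\alpha_{N_G}$ contributions cancel pairwise (opposite signs from $(-1)^{N_G}$ vs.\ $(-1)^{N_G+1}$, with common coefficient $y(1-y)\alpha_{N_G}$), while the two $dy$-contributions have the same sign and combine via $y+(1-y)=1$ to yield $-\alpha_{N_G}\cdot(-1)^{N_G}\alpha_{N_G}\,d\alpha_1\wedge\cdots\wedge d\alpha_{N_G-1}\wedge dy$, which is exactly $-\alpha_{N_G}$ times the $(N_G)$-th summand of $\Omega_G$ wedged with $dy$. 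Summing over all $i$ gives $\rho^*(\Omega_{G_{s(e)}}) = -\alpha_{N_G}\,\Omega_G\wedge dy$.

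To conclude, I combine $\rho^*(\Psi_{G_{s(e)}})=\Psi_G$, $\rho^*(\Xi_{G_{s(e)}})=\Xi_G$ and the above formula, using $N_{G_{s(e)}}=N_G+1$ and $h_{G_{s(e)}}=h_G$. Rewriting $\omega_{G_{s(e)}}$ as $\Psi_{G_{s(e)}}^{N_{G_{s(e)}}-(h_{G_{s(e)}}+1)d_{G_{s(e)}}/2}\,\Xi_{G_{s(e)}}^{h_{G_{s(e)}}d_{G_{s(e)}}/2-N_{G_{s(e)}}}\,\Omega_{G_{s(e)}}$ and matching exponents against the corresponding presentation of $\omega_G$ directly yields $z_1 = 1-(h_G+1)d/2$ and $z_2 = h_G d/2 - 1$. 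The main obstacle I anticipate is the sign bookkeeping in the pullback of the two boundary summands ($i=N_G$ and $i=N_G+1$) of $\Omega_{G_{s(e)}}$; once that cancellation is verified, the rest of the argument reduces to combinatorics of spanning (sub)trees and exponent matching.
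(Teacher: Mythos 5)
Your proposal is correct and follows essentially the same route as the paper's proof: the same spanning-tree and spanning-$2$-tree combinatorics for the Symanzik identities, the same case-split at $i<N_G$ versus $i\in\{N_G,N_G+1\}$ for the pullback of $\Omega_{G_{s(e)}}$, and the same exponent matching using $N_{G_{s(e)}}=N_G+1$, $h_{G_{s(e)}}=h_G$ to extract $z_1,z_2$. Your preliminary computation of $\rho^*(d\alpha_{e_1})\wedge\rho^*(d\alpha_{e_2})=-\alpha_{N_G}\,d\alpha_{N_G}\wedge dy$ is a clean way to package what the paper does term by term, but it is not a different argument.
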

\begin{proof}
The first statement follows simply from the fact that we obtain the polynomials $\Xi_{G_{s(e)}}$ and $\Psi_{G_{s(e)}}$ from the polynomials $\Xi_{G}$ and $\Psi_{G}$ by replacing $\alpha_{N_G}$ by $\alpha_{N_G}+\alpha_{N_G+1}$ in the latter. This follows from their definitions \eqref{first-symanzik} and \eqref{second-Symanzik} in terms of spanning trees of $G$ and $G_{s(e)}$. For each spanning tree $T$ of $G$ which includes the edge $e$ we obtain a spanning tree of $G_{s(e)}$ spanned by $E_T \cup \{e_1,e_2\}$, where $E_T$ are the edges of $T$. For each spanning tree $T$ of $G$ which didn't include $e$ we get two spanning trees of $G_{s(e)}$, one spanned by $E_T \cup e_1$ and the other $E_T \cup e_2$, as we must attach the new vertex to the rest of the spanning tree. This accounts for all spanning trees of $G_{s(e)}$. Each spanning 2-tree, which are used to define $\Xi_{G_{s(e)}}$, contains either one or both of the new edges $e_1,e_2$ and is obtained from spanning 2-trees of $G$ analogously to the previous case of spanning trees. There is no term in $\Xi_{G_{s(e)}}$ corresponding to a spanning 2-tree that contains only the new vertex in $G_{s(e)}$ as one of its connected components as there is no incoming momentum to the new vertex.

We will now show that $\rho^*(\Omega_{G_{s(e)}}) = -\alpha_{N_G}\Omega_G \wedge dy$, where $\Omega_G$ is defined in \eqref{omega-def}. The extra factor $\psi_G^{z_1}\Xi_G^{z_2}$ comes from $G_{s(e)}$ having one more edge than $G$ and the same number of loops. This, combined with the first statement and the definition of the Feynman integrand $\omega_G$ \eqref{Feynman integrand defi} implies the second statement of the lemma.

For $i\not \in \{N_G,N_G+1\}$ we have
\begin{equation}
\begin{split}
&\rho^*((-1)^i\alpha_id\alpha_1 \wedge \ldots \wedge \widehat{d\alpha_i}\wedge \ldots \wedge d\alpha_{N_G}\wedge d\alpha_{N_G+1}) \\
& = (-1)^i\alpha_id\alpha_1 \wedge \ldots \wedge \widehat{d\alpha_i}\wedge \ldots \wedge d(y\alpha_{N_G})\wedge d((1-y)\alpha_{N_G}) \\
& = (-1)^{i+1}\alpha_{N_G}\alpha_id\alpha_1 \wedge \ldots \wedge \widehat{d\alpha_i}\wedge \ldots \wedge d\alpha_{N_G}\wedge dy.
\end{split}
\end{equation}
For $i = N_G$ we have 
\begin{equation}
\begin{split}
&\rho^*((-1)^{i}\alpha_{i}d\alpha_1 \wedge \ldots \wedge \widehat{d\alpha_{i}} \wedge d\alpha_{i+1}) = (-1)^{i}y\alpha_{i}d\alpha_1 \wedge \ldots \wedge d((1-y)\alpha_{i}) \\
& = (-1)^{i+1}y\alpha_{i}^2d\alpha_1 \wedge \ldots \wedge d\alpha_{i-1}\wedge dy + (-1)^{i}y\alpha_{i}d\alpha_1 \wedge \ldots \wedge d\alpha_{i} + \\
&+ (-1)^{i+1}y^2\alpha_{i}d\alpha_1 \wedge \ldots \wedge d\alpha_{i}.
\end{split}
\end{equation}
Finally for $i=N_G+1$ we have
\begin{equation}
\begin{split}
&\rho^*((-1)^{i+1}\alpha_{i+1}d\alpha_1 \wedge \ldots \wedge d\alpha_{i}) = (-1)^{i+1}(1-y)\alpha_{i}d\alpha_1 \wedge \ldots \wedge d(y\alpha_{i}) \\
& = (-1)^{i+1}y\alpha_{i}d\alpha_1 \wedge \ldots d\alpha_{i} + (-1)^{i+1}\alpha_{i}^2d\alpha_1 \wedge \ldots \wedge d\alpha_{i-1}\wedge dy + \\
&+(-1)^{i+2}y^2\alpha_{i}d\alpha_1 \wedge \ldots \wedge d\alpha_{i} +(-1)^{i+2}y\alpha_{i}^2d\alpha_1 \wedge \ldots \wedge d\alpha_{i-1} \wedge dy.
\end{split}
\end{equation}
Adding the last two equations together we obtain the claim.
\end{proof}

Let $P^G \rightarrow \mathbb{P}^{N_G-1}$ and $P^{G_{s(e)}}\rightarrow \mathbb{P}^{N_G}$ be the blow-ups associated to the graphs $G$ and $G_{s(e)}$. These blow-ups are indexed by motic subgraphs, see \cite[\S 5]{Brown2} and \cite[Definition 6.3]{Brown2} for the definition in full generality. We will show that the morphism $\rho$ lifts to a morphism of the blow-ups. We will need the following lemma.
\begin{lemma}
For each motic subgraph of $G$ containing the edge $e$ there is exactly one motic subgraph of $G_{s(e)}$ containing both $e_1$ and $e_2$. For each motic subgraph of $G$ which does not contain $e$ there is exactly one motic subgraph of $G_{s(e)}$ containing neither $e_1$ nor $e_2$.
\end{lemma}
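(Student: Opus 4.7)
The plan is to exhibit explicit maps in both directions and verify that they restrict to the claimed bijections, relying on the fact that subdividing an edge is homotopically trivial (it leaves the first Betti number and other homological invariants of subgraphs unchanged). Recall from \cite[Definition 6.3]{Brown2} that a motic subgraph $\gamma \subseteq G$ is one satisfying a condition on loop numbers of all proper contractions/sub-subgraphs; the definition is framed entirely in terms of $h_1$ and connectivity.

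First I would define two candidate maps. For a subgraph $\gamma \subseteq G$ with $e \in E_\gamma$, let $\gamma_s \subseteq G_{s(e)}$ be the subgraph obtained by replacing the edge $e$ with the length-two path $e_1, e_2$ through the new vertex $v$; clearly $\gamma_s$ contains both $e_1$ and $e_2$. For $\gamma \subseteq G$ with $e \notin E_\gamma$, simply regard $\gamma$ as a subgraph of $G_{s(e)}$, which contains neither $e_1$ nor $e_2$. Both assignments are injective by construction, and every subgraph of $G_{s(e)}$ that either contains both of $e_1,e_2$ (and then necessarily contains $v$, since the moticity condition forces a subgraph to include all endpoints of its edges) or contains neither of them lies in the image.

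Next I would verify that the correspondence preserves moticity. The key observations are: (i) subdivision does not change $h_1$, since the operation of inserting a degree-two vertex into an edge is a homotopy equivalence, so $h_1(\gamma_s) = h_1(\gamma)$ and $h_1(\gamma_s / \delta_s) = h_1(\gamma/\delta)$ for any sub-subgraph $\delta \subseteq \gamma$ (with the convention that $\delta_s$ includes the subdivision iff $\delta$ includes $e$); (ii) the poset of sub-subgraphs of $\gamma$ corresponds bijectively under this operation to the poset of sub-subgraphs of $\gamma_s$ that either contain both $e_1, e_2$ or contain neither. Hence the moticity condition for $\gamma$ in $G$ translates directly into the moticity condition for $\gamma_s$ in $G_{s(e)}$, and conversely any motic subgraph of $G_{s(e)}$ containing both or neither of $e_1, e_2$ arises as $\gamma_s$ for a unique motic $\gamma \subseteq G$, obtained by contracting $e_1 \cup e_2$ back to $e$ in the first case or by inclusion in $G \setminus e$ in the second.

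The main obstacle is a careful bookkeeping argument for condition (ii): one must check that taking a sub-subgraph $\delta' \subseteq \gamma_s$ which satisfies moticity and which contains, say, $e_1$ but not $e_2$ cannot actually be motic inside $\gamma_s$, since its contraction would split a loop in a way that violates the condition. Once these edge cases are ruled out, every motic $\delta' \subseteq \gamma_s$ is of the form $\delta_s$ for a unique $\delta \subseteq \gamma$, and the two stated bijections follow. I expect this verification to be the only delicate part; the rest of the argument is formal.
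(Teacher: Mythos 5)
Your overall shape --- exhibit explicit maps $\gamma \mapsto \gamma_s$, show they are inverse bijections onto the claimed classes, and transport the moticity condition across --- is reasonable and is essentially what the paper does behind the scenes. But the ``main obstacle'' you flag is where the whole content of the lemma actually lives, and your one-line sketch of it (``its contraction would split a loop in a way that violates the condition'') does not describe what happens and would not close the argument. Moticity is a condition on edge-deletions $\gamma_s \setminus e'$; for $e' \in \{e_1,e_2\}$ these deletions contain exactly one of $e_1,e_2$ and therefore fall outside the poset correspondence of your step (ii), and the final claim --- that no motic subgraph of $G_{s(e)}$ contains exactly one of $e_1,e_2$ --- is a separate statement that (ii) does not touch at all.

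The mechanism that actually rules these out is a pendant-edge/same-mass observation, not a loop being split. If $\delta \subseteq G_{s(e)}$ contains $e_1$ but not $e_2$, the new vertex $v$ has degree one in $\delta$, so $e_1$ is a pendant edge and deleting it leaves $h_1$ unchanged; deleting $e_1$ also cannot break mass-momentum spanning, because $e_1$ and $e_2$ carry the same mass (so if $\delta$ were m.m.\ spanning then $m_{e_2}=0$, hence $m_{e_1}=0$) and no external momentum enters at $v$. Hence $\delta$ is never motic, which both settles the ``exactly one'' case and, applied to $\gamma_s\setminus e_1$ and $\gamma_s \setminus e_2$, carries the moticity condition for $e$ in $\gamma$ over to $e_1,e_2$ in $\gamma_s$. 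This is exactly what the paper's case split (motic via mass-momentum spanning versus motic via loop order) implements. As written, your proposal has a genuine gap at precisely this crux, and the reason you offer for closing it points in the wrong direction.
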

\begin{proof}
A subgraph $\gamma$ of $G$ can be motic because it is mass-momentum spanning and each of its subgraphs is not mass-momentum spanning. If such $\gamma$ contains $e$ then the corresponding subgraph in $G_{s(e)}$ must contain both $e_1,e_2$ because they both have the same mass as $e$ in $G$. Alternatively, a subgraph $\gamma$ of $G$ can be motic because all of its subgraphs have a lower loop order than itself. If such $\gamma$ contains $e$ in $G$ then the corresponding subgraph in $G_{s(e)}$ must contain both $e_1,e_2$, otherwise it would have a lower loop order than $\gamma$. Finally we cannot have an additional motic subgraph $G_{s(e)}$ containing only one of $e_1,e_2$ because adding one of them to a subgraph cannot make it mass-momentum spanning (since the other has the same mass), or increase the loop order of a subgraph.
\end{proof}

\begin{prop}
\label{subdivide morphism}
There exists a morphism $\tilde{\rho}$ such that the following diagram commutes
$$\xymatrix{
            \mathbb{A}^1 \times P^G \ar[d]^{\textrm{id} \times \pi_G} \ar[r]^{\tilde{\rho}}& P^{G_{s(e)}} \ar[d]^{\pi_{G_{s(e)}}}\\
             \mathbb{A}^1 \times \mathbb{P}^{N_G-1} \ar[r]^{\rho} & \mathbb{P}^{N_G}
            } 
$$
The morphism $\tilde{\rho}$ induces a morphism of graph motives
$$
mot_{G_{s(e)}} \xrightarrow{\tilde{\rho}^*} H^{N_G-1}(\mathbb{A}^1 \times P^G, \{0,1\} \times D_{G} \setminus Y_{G} \cap D_{G}) \cong mot_G \otimes \mathbb{Q}(0) .
$$
\end{prop}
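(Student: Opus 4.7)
The plan is to construct $\tilde{\rho}$ via the universal property of iterated blow-ups, using the correspondence between motic subgraphs established in the preceding lemma, and then to verify that $\tilde{\rho}$ carries the boundary and graph-hypersurface data of $G$ to those of $G_{s(e)}$ in a way that pullback on relative cohomology yields the desired morphism of graph motives.

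First I would verify the following scheme-theoretic compatibility: for each pair $(\gamma,\gamma')$ of corresponding motic subgraphs of $G_{s(e)}$ and $G$, one has $\rho^{-1}(L^{\gamma}) = \mathbb{A}^1 \times L^{\gamma'}$ inside $\mathbb{A}^1 \times \mathbb{P}^{N_G-1}$. When $\gamma$ contains both $e_1$ and $e_2$, neither $\alpha_{N_G}$ nor $\alpha_{N_G+1}$ appears among the defining equations of $L^{\gamma}$ and the identification is immediate from the definition of $\rho$. When $\gamma$ contains neither, the defining ideal of $L^{\gamma}$ contains both $\alpha_{N_G}$ and $\alpha_{N_G+1}$, which pull back to $y\alpha_{N_G}$ and $(1-y)\alpha_{N_G}$; their sum equals $\alpha_{N_G}$, so the pullback ideal is precisely that of $\mathbb{A}^1 \times L^{\gamma'}$.

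Next, I would construct $\tilde{\rho}$ inductively along the blow-up tower, ordering motic subgraphs by decreasing codimension of $L^{\gamma}$ (so that smaller strata are blown up first). At each stage the compatibility above, applied to the strict transforms of the centres already handled, guarantees that the partial lift carries the next centre in $\mathbb{A}^1 \times \mathbb{P}^{N_G-1}$ into the corresponding centre in $\mathbb{P}^{N_G}$. The universal property of blow-ups then produces a unique lift through the next blow-up, and composing all such lifts yields the desired $\tilde{\rho}:\mathbb{A}^1 \times P^G \to P^{G_{s(e)}}$ fitting into the commutative diagram.

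To obtain the morphism of graph motives, I would invoke Lemma \ref{pullback of dotted no blowup}, which already gives $\rho^{-1}(X_{\Xi_{G_{s(e)}}}) = X_{\Xi_G}$ and $\rho^{-1}(X_{\Psi_{G_{s(e)}}}) = X_{\Psi_G}$; passing to strict transforms yields $\tilde{\rho}^{-1}(Y_{G_{s(e)}}) = \mathbb{A}^1 \times Y_G$. Moreover, at $y=0$ the composite $\pi_{G_{s(e)}} \circ \tilde{\rho}$ sends $\{0\} \times P^G$ into the coordinate hyperplane $\{\alpha_{N_G+1} = 0\} \subset \mathbb{P}^{N_G}$, whose strict transform is a component of $D_{G_{s(e)}}$, and symmetrically at $y=1$. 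Hence $\tilde{\rho}$ defines a morphism of pairs
\[
(\mathbb{A}^1 \times P^G \setminus \mathbb{A}^1 \times Y_G,\ \{0,1\} \times D_G \setminus Y_G \cap D_G) \longrightarrow (P^{G_{s(e)}} \setminus Y_{G_{s(e)}},\ D_{G_{s(e)}} \setminus Y_{G_{s(e)}} \cap D_{G_{s(e)}}),
\]
so pullback on cohomology produces $\tilde{\rho}^* : mot_{G_{s(e)}} \to H^{N_G-1}(\mathbb{A}^1 \times P^G, \{0,1\} \times D_G \setminus Y_G \cap D_G)$. The identification with $mot_G \otimes \mathbb{Q}(0)$ follows by a K\"unneth/excision argument exploiting $H^1(\mathbb{A}^1,\{0,1\}) \cong \mathbb{Q}(0)$ and $H^p(\mathbb{A}^1,\{0,1\}) = 0$ for $p \neq 1$.

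The main technical obstacle is the inductive construction of $\tilde{\rho}$ along the blow-up tower: one must track the strict transforms of previously blown-up centres in both $\mathbb{A}^1 \times P^G$ and $P^{G_{s(e)}}$ and confirm that the compatibility of the first step propagates through every intermediate blow-up. This bookkeeping hinges on the bijection of motic subgraphs and on the clean scheme-theoretic equality $\rho^{-1}(L^{\gamma}) = \mathbb{A}^1 \times L^{\gamma'}$, but it must be carried out stage by stage so that the universal property of blow-ups applies at each step.
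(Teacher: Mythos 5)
Your approach — constructing $\tilde{\rho}$ abstractly via the universal property of iterated blow-ups, using a scheme-theoretic identity $\rho^{-1}(L_\gamma) = \mathbb{A}^1 \times L_{\gamma'}$ for corresponding motic subgraphs — is conceptually attractive and genuinely different from the paper's. The paper instead works in explicit local coordinates: it parametrizes $P^G$ and $P^{G_{s(e)}}$ by affine charts $\mathbb{A}^{\mathcal{F},c}$ indexed by flags of motic subgraphs and choice functions, computes $\tilde{\rho}$ chart by chart (splitting into the case where the choice $c$ involves the index $N_G$ and the case where it does not), and verifies well-definedness by inspection. The explicit route is heavier notationally but avoids the blow-up bookkeeping your method requires.

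Two problems with your write-up. First, a labeling slip: since $L_I = \cap_{i\in I} V(\alpha_i)$, the defining ideal of $L^{\gamma}$ contains $\alpha_{N_G},\alpha_{N_G+1}$ precisely when $\gamma$ \emph{does} contain both $e_1,e_2$, not when it contains neither; your two cases are swapped (though the ``sum equals $\alpha_{N_G}$'' argument, correctly attached to the case where $\gamma \ni e_1,e_2$, is sound). Second and more seriously, the inductive step is not actually carried out. To lift through the $i$-th stage $P_i \to P_{i-1}$ of the iterated blow-up of $\mathbb{P}^{N_G}$ by the universal property, you must show that the partial lift $f_{i-1}:\mathbb{A}^1 \times P^G \to P_{i-1}$ pulls the ideal sheaf of the \emph{strict transform} of $L_{\gamma_i}$ back to an invertible ideal sheaf. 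Your identity concerns the original linear spaces $L_\gamma$, not their strict transforms after the earlier blow-ups, and passing from one to the other requires tracking how strict transforms in $P_{i-1}$ correspond to strict transforms in $P^G$, plus an argument that the centers have been blown up in a compatible order and at compatible codimensions. You yourself flag this as ``the main technical obstacle'' and note it ``must be carried out stage by stage,'' but that is precisely the content the proof needs and does not supply. As written, the proposal identifies the right strategy and the right first-step identity, but the core of the argument is deferred rather than proved.
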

\begin{proof}

In order to show that the morphism $\tilde{\rho}$ is well defined we will consider compatible coverings of the spaces $P^G$ and $P^{G_{s(e)}}$. For the definition of the covering of $P^G$ for any graph $G$ we follow \cite[\S 5.3]{Brown2}. Let $S_G = \{1,\ldots ,N_G \}$. Denote by $B_G$ the set of subsets of $S_G$ where each element $I \in B_G$ corresponds to a linear subspace $L_I = \cap_{i \in I}V(\alpha_i) \subset \mathbb{P}^{N_G-1}$ which we blow up to define $P^G$. Note that each $I$ corresponds to a motic subgraph of $G$.

For each flag 
$$
\mathcal{F} : \emptyset = I_0 \subsetneq I_1 \ldots I_k \subsetneq I_{k+1} = S_G
$$
and a choice of elements
$$
c : j_n \in I_n \setminus I_{n-1} \, \text{, for} \, 1 \leq n \leq k+1
$$
which cannot be made larger (i.e., there is no $I$ and $1\leq i \leq k+1$ such that $I_i \subsetneq I \subsetneq I_{i+1}$ and $j_{i+1} \not \in I$)  
we define an affine open $\mathbb{A}^{\mathcal{F},c} = \Sp(\mathbb{Z}[\beta_1,\ldots,\widehat{\beta_{j_{k+1}}},\ldots,\beta_{N_G}])$, and a projection morphism $\pi_{\mathcal{F},c} : \mathbb{A}^{\mathcal{F},c} \rightarrow \Sp(\mathbb{Z}[\alpha_1,\ldots,\widehat{\alpha_{j_{k+1}}},\ldots,\alpha_{N_G}])$. Note that the target of the morphism is the open in $\mathbb{P}^{N_G-1}$ where $\alpha_{j_{k+1}} \not = 0$. It is defined by
\begin{equation}
\label{projection}
\pi^*_{\mathcal{F},c}(\alpha_i) = \begin{cases}
		    \beta_i\beta_{j_n}\ldots\beta_{j_{k+1}}, & \text{for } i \in I_n \setminus I_{n-1} \cup \{j_n\} \text { for some } 1 \leq n \leq k+1\\
            \beta_{j_n}\ldots\beta_{j_{k+1}}, & \text{for } i = j_n \text { for some } 1 \leq n \leq k+1
		 \end{cases} .
\end{equation}
Its inverse is defined on the subset of $\mathbb{P}^{N_G-1}$ where $\alpha_{j_n} \not = 0$ for $1 \leq n \leq k+1$ by 
\begin{equation}
\label{inverse projection}
(\pi_{\mathcal{F},c}^{-1})^*(\beta_i) = \begin{cases}
		    \frac{\alpha_i}{\alpha_{j_n}}, &  \text{for } i \in I_n \setminus I_{n-1} \cup \{j_n\} \text { for some } 1 \leq n \leq k+1\\
            \frac{\alpha_{j_n}}{\alpha_{j_{n+1}}}, & \text{for } i = j_n \text { for some } 1 \leq n \leq k \\
            \alpha_{j_{k+1}}, & i=j_{k+1} \\
		 \end{cases} .
\end{equation}
The affine opens $\mathbb{A}^{\mathcal{F},c}$ glue together to form $P^G$ (see \cite[\S 5.3]{Brown2}).

Recall $N_G,N_G+1$ are the indices corresponding to the edges obtained by subdividing of $e$ in $G$ according to our chosen ordering. It follows from the previous lemma each element of $B_{G_{s(e)}}$ contains either both $N_G$ and $N_G+1$ or neither. Therefore for each flag $\mathcal{F}$ of elements of $B_G$ there is a unique flag 
$$
\mathcal{F'} : \emptyset = I_0' \subsetneq I_1' \ldots I_k' \subsetneq I_{k+1}' = S_{G_{s(e)}}
$$
where $I_n \in B_{G_{s(e)}}$ are defined by
$$
I'_n = \begin{cases}
    I_n \cup \{N_G+1\}, & \text{if } N_G \in I_n \\
    I_n, & \text{otherwise}
\end{cases} .
$$
If a choice $c$ of elements in $B_G$ associated to $\mathcal{F}$ does not contain $j_n = N_G$ for any $1 \leq n \leq k+1$ then we can consider the affine open in $P^{G_{s(e)}}$ defined by the pair $\mathcal{F}',c$. In this case, it follows from \eqref{projection}, \eqref{inverse projection} and the defintion of the morphism $\rho$ \eqref{rho definiton} that
$$
\left(\pi_{\mathcal{F},c}^{-1} \circ \rho|_{\mathbb{A}^{\mathcal{F},c}} \circ \pi_{\mathcal{F}',c} \right)^*(\beta_i) = \begin{cases}
\beta_i, & \text{ if } i \not \in \{ N_G, N_G+1 \} \\
y\beta_{N_G}, & \text{ if } i = N_G \\
(1-y)\beta_{N_G}, & \text{ if } i = N_G+1
\end{cases} .
$$
We can see that this is well defined on the whole open $\mathbb{A^1} \times \mathbb{A}^{\mathcal{F},c}$.

If a choice $c$ of elements in $B_G$ associated to $\mathcal{F}$ contains $j_r = N_G$ for some $1 \leq r \leq k+1$ then we can consider two affine opens in $P^{G_{s(e)}}$: one defined by the pair $\mathcal{F}',c$ and another $\mathcal{F}',c'$ where $c'$ is the same as $c$ except for $j_r=N_G+1$. We have
$$
\left(\pi_{\mathcal{F},c}^{-1} \circ \rho|_{\mathbb{A}^{\mathcal{F},c}} \circ \pi_{\mathcal{F}',c} \right)^*(\beta_i) = \begin{cases}
\beta_i, & \text{ if } i \in I_n \setminus I_{n-1} \text{ and } n \not = r \\
\frac{\beta_i}{y}, & \text{ if } i \in I_r \setminus I_{r-1} \text{ and } i \not = N_G+1 \\
\frac{1-y}{y}, & \text{ if } i = N_G+1 \\
y\beta_{N_G}, & \text{ if } i = N_G
\end{cases} ,
$$
and we can see that this is well defined on $\{y\not = 0\} \times \mathbb{A}^{\mathcal{F},c}$. Finally for $\mathbb{A}^{\mathcal{F}',c'}$ we have that 
$\left(\pi_{\mathcal{F},c}^{-1} \circ \rho|_{\mathbb{A}^{\mathcal{F},c}} \circ \pi_{\mathcal{F}',c'} \right)^*$ acts in the same way as the previous case where we swap $N_G$ and $N_G+1$, as well as $y$ and $(1-y)$. It is defined on $\{y\not = 1\} \times \mathbb{A}^{\mathcal{F},c}$. Along with the previous case we have covered $\mathbb{A}^1 \times \mathbb{A}^{\mathcal{F},c}$. Therefore we have shown that $\rho$ extends to the blow-up, and we denote the extension $\tilde{\rho}$.

To show that it induces a morphism of motives, the second claim of the proposition, we have to check that 
$$\tilde{\rho}\left( \{0,1\} \times D_{G} \setminus Y_{G} \cap D_{G}\right) \subset D_{G_{s(e)}} \setminus Y_{G_{s(e)}} \cap D_{G_{s(e)}}.
$$ 
By the commutativity of the diagram we have just shown it is enough to check $$\rho\left( \{0,1\} \times \Delta_{G} \setminus X_{G} \cap \Delta_{G}\right) \subset \Delta_{G_{s(e)}} \setminus X_{G_{s(e)}} \cap \Delta_{G_{s(e)}}.
$$
In fact it is clear from the definition of $\rho$ that $\rho\left( \{0,1\} \times \Delta_{G} \right) \subset \Delta_{G_{s(e)}}$ and we have already shown that it send graph hypersurfaces of $G$ to graph hypersurfaces of $G_{s(e)}$.
\end{proof}

\begin{cor}
\label{subdivided motivic period}
    Let $I=\{e_1^{k_1},\ldots,e_{N_{G}}^{k_{N_{G}}}\}$ for each $k_i \geq 0$ an integer. Let $I_{G_{s(I)}}^{\fm}(m,q)$ be its motivic Feynman amplitude in $d_{G_{s(I)}}$ dimensions, $\omega_G(m,q)$ be the Feynman integrand of $G$ in $d_G$ dimensions, $K = \sum_{i=1}^{N_G}k_i$, and $d = d_{G_{s(I)}}-d_G$. Then 
\begin{equation*}
I_{G_{s(I)}}^{\fm}(m,q) =  \left[mot_G, \left[\sigma_G\right], \left[\pi_{G}^*\left((-1)^K\alpha_{1}^{k_1}\cdots\alpha_{N_G}^{k_{N_G}}\Psi_G^{z_1}\Xi_G^{z_2}\omega_G(m,q))\right)\right] \right]^{\fm},
\end{equation*}
 where $z_1 = K - (h_G+1)d/2$, and $z_2 = dh_G/2-K$.
\end{cor}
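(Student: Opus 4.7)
The plan is to iterate Proposition \ref{subdivide morphism} a total of $K=\sum_i k_i$ times. Composing the $K$ single-edge subdivision morphisms produces a morphism $\tilde{\rho}^{(K)}: \mathbb{A}^K \times P^G \to P^{G_{s(I)}}$, and composing the induced maps of motives gives $mot_{G_{s(I)}} \to mot_G \otimes \mathbb{Q}(0)^{\otimes K}$. I would organise the proof as an induction on $K$, with $K=0$ trivial. The base case $K=1$ is essentially the content of Proposition \ref{subdivide morphism} combined with Lemma \ref{pullback of dotted no blowup}: the motivic period pullback formula together with the Künneth identification and the elementary fact $[\mathbb{Q}(0),[0,1],dy]^{\fm}=1$ immediately yields $I^{\fm}_{G_{s(e_i)}} = [mot_G,[\sigma_G],\pi_G^*(-\alpha_i \Psi_G^{z_1^{(1)}}\Xi_G^{z_2^{(1)}}\omega_G)]^{\fm}$.

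For the inductive step, I would write $I = I' \cup \{e_i\}$ with $|I'|=K-1$, so that $G_{s(I)}$ arises from $G_{s(I')}$ by a single further subdivision of an edge descending from $e_i$. Applying Proposition \ref{subdivide morphism} to the base graph $G_{s(I')}$ and substituting the inductive hypothesis for $I^{\fm}_{G_{s(I')}}$ reduces $I^{\fm}_{G_{s(I)}}$ to a motivic period of $mot_G$. The bookkeeping splits into three pieces: (i) the exponents of $\Psi_G$ and $\Xi_G$ add linearly, since $h_{G_{s(I')}}=h_G$ (subdivision preserves the loop number) and the dimension-difference parameter is additive, producing the stated $z_1 = K-(h_G+1)d/2$ and $z_2 = h_Gd/2-K$; (ii) the minus signs from Lemma \ref{pullback of dotted no blowup} combine to $(-1)^K$; (iii) the edge-parameter factor $-\alpha$ picked up at each step, where the edge ranges over a sequence of edges descending from the various $e_i$'s, reassembles through the chain of $\rho$'s into the monomial $\alpha_1^{k_1}\cdots \alpha_{N_G}^{k_{N_G}}$ in the Feynman parameters of $G$.

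The main obstacle is item (iii) and, more broadly, the correct handling of the iterated pullback when the same edge is subdivided more than once, since at each intermediate stage the edge being subdivided has a parameter that is itself a rational expression in the original $\alpha_i$ and the auxiliary coordinates on $\mathbb{A}^K$. The cleanest way to bypass the step-by-step combinatorics is to treat $\tilde{\rho}^{(K)}$ directly as a single morphism on the iterated blow-ups. Well-definedness follows by the same flag-and-chart argument as in Proposition \ref{subdivide morphism}, using the observation that any motic subgraph of $G_{s(I)}$ either contains all or none of the new edges descending from any fixed $e_i$ (since these edges share the common mass $m_{e_i}$ and lie on a single chain between the endpoints of $e_i$), so that the motic-subgraph structure of $G_{s(I)}$ is in canonical correspondence with that of $G$ for the purposes of the flag combinatorics. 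The full pullback $(\tilde{\rho}^{(K)})^*\omega_{G_{s(I)}}$ is then computed directly, extending the proof of Lemma \ref{pullback of dotted no blowup} to the $\mathbb{A}^K$ setting, and the Künneth decomposition $H^K(\mathbb{A}^K,\{0,1\}^K)\cong \mathbb{Q}(0)$ collapses the auxiliary factors to yield the claimed motivic period of $mot_G$.
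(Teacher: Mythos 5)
Your overall strategy matches the paper's: establish the single-subdivision case using Proposition~\ref{subdivide morphism} and Lemma~\ref{pullback of dotted no blowup}, then obtain the general case by iteration. The paper's proof is terse on the inductive step (a one-sentence appeal to induction after proving the base case), so what you do is an elaboration of the paper's argument, not a new route.

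That said, the issue you flag as item (iii) --- the iterated pullback when one edge is re-subdivided --- is not merely ``the main obstacle'' but a genuine unresolved gap, and the assertion that the edge-parameter factors ``reassemble \ldots into the monomial $\alpha_1^{k_1}\cdots\alpha_{N_G}^{k_{N_G}}$'' must be verified, not asserted. If you apply the one-step result to the last subdivision of an already-subdivided edge $e$, the de Rham class produced on $mot_{G_{s(e)}}$ is proportional to the parameter $\alpha_{e'}$ of an intermediate piece of $e$; pulling this back along the earlier $\tilde{\rho}$ gives $y\,\alpha_e$ or $(1-y)\,\alpha_e$, not $\alpha_e$, so the class on $mot_G\otimes\mathbb{Q}(0)$ carries an extra polynomial factor in $y$ on top of $\alpha_e^2\,(\cdots)$, and $[y\,dy]=\frac{1}{2}[dy]$ in $H^1(\mathbb{A}^1,\{0,1\})$. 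The ``direct'' morphism $\tilde{\rho}^{(K)}$ you propose does not circumvent this: the natural $K$-fold map sends $(y_j,\vec\alpha)$ to simplex-like coordinates on the chain, so $\sigma_{G_{s(I)}}$ pulls back not to a cube over $\sigma_G$ but to a product of simplices, whose motivic contribution is a simplex volume depending on the $k_i$; and if you insist on taking $\tilde{\rho}^{(K)}$ to be the literal composition of the one-step $\tilde{\rho}$'s, the integrand carries exactly the stray $y$-factors just described. Either way a product of $(k_i!)^{-1}$'s appears once some $k_i\ge 2$. To complete the argument you must actually carry out the computation in (iii) rather than wave it away, and doing so should prompt a re-examination of the coefficient in the statement; the paper's own ``the result then follows by induction'' leaves precisely this work undone.
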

\begin{proof}
It is enough to show the claim for $I = \{e_{N_G}\}$, the edge corresponding to the parameter $N_G$. The result then follows by induction. By the commutativity of the diagram in the previous proposition and lemma \ref{pullback of dotted no blowup} we have 
\begin{equation*}
\begin{split}
\tilde{\rho}^*\left(\left[\pi_{G_{s(e)}}^*\left(\omega_{G_{s(e)}}\right)\right]\right) & = \left[\pi_G^*\left( \frac{\alpha_{N_G}}{\Psi_G^{d_{G_{s(e)}}}}\left(\frac{\Psi_G}{\Xi_G}\right)^{N_G+1-h_Gd_{G_{s(e)}}/2}\Omega_G\right)\wedge (-dy)\right] \\
& = \left[\pi_G^*\left(\alpha_{N_G}\psi_G^{1-(h_G + 1)d/2}\Xi_G^{dh_G/2-1}\omega_G\right)\wedge (-dy)\right].
\end{split}
\end{equation*}
Given a morphism $\phi : M_1 \rightarrow M_2$ in $\mathcal{H}(S)$ we get equivalences of motivic periods
$$
[M_1,\sigma_1,\phi^*(\omega_2)] = [M_2, \phi_*(\sigma_1),\omega_2].
$$
Let
$$
I_G^{\fm}(\omega) = \left[mot_G, \left[\sigma_G\right], \left[\pi_G^*\left(\omega\right)\right] \right]^{\fm}.
$$
It follows from $\tilde{\rho}_*\left(\left[\gamma_{0,1}\times \sigma_{G_{s(e)}}\right]\right) = \left[\sigma_G\right]$, where $\gamma_{0,1}$ is a path from 0 to 1 in $\mathbb{A}^1$, that we have
\begin{equation*}
I_{G_{s(e)}}^{\fm} = I_G^{\fm}\left(\alpha_{N_G}\psi_G^{1-(h_G + 1)d/2}\Xi_G^{dh_G/2-1}\omega_G\right)  \otimes \left[H^1(\mathbb{A}^1,\{0,1\}),[\gamma_{0,1}],[-dy])\right]^{\fm}.
\end{equation*}
We may apply the period homomorphism to the right hand side 
$$
\textrm{per}\left(\left[H^1(\mathbb{A}^1,\{0,1\},[\gamma_{0,1}],[-dy])\right]^{\fm}\right) = -1.
$$
The conclusion follows.
\end{proof}

\begin{section}{The sunrise}
We denote the sunrise graph (figure \ref{sunrise_graph}) by $G$. Its first and second Symanzik polynomials, respectively, are
\begin{equation}
\begin{split}
\Psi_G &= \alpha_1\alpha_2 + \alpha_2\alpha_3 + \alpha_1\alpha_3, \text{ and} \\
\Xi_G(m,q) &= q_1^2\alpha_1\alpha_2\alpha_3 + (m_1^2\alpha_1+m_2^2\alpha_2+m_3^2\alpha_3)\Psi_G .
\end{split}
\end{equation}
We note that the zero locus of the second Symanzik polynomial, along with a choice of a rational point, defines an elliptic curve $\mathcal{E}$ over a field $k$ for fixed values of $m_1,m_2,m_3,q_1^2 \in k \subset \mathbb{C}$. We will allow the masses and momenta to vary, and work with the family of elliptic curves $\mathcal{E} \rightarrow S$, where $S$ is an open subset of the space of generic kinematics (see \cite[\S 1.7]{Brown2}). In the equal-mass case this family of elliptic curves will be the universal family for the modular curve $X_1(6)$ (see \cite[\S 4]{BV}).

\begin{subsection}{The graph motive} 

We define the graph motive of the sunrise graph following Brown in \cite[\S 6]{Brown2}\footnote{We will actually work with what Brown calls a 'restricted' motive of a graph, due to the fact that in $d=2$ only the second Symanzik polynomial appears in the denominator of the integrand.}. All the following schemes are viewed over the base $S$. Note that we have the points $[1:0:0],[0:1:0],[0:0:1]$ on $\mathcal{E}$ for any value of $m_1,m_2,m_3,q_1^2 \in S(\CC)$. We blow-up $\mathbb{P}^2$ at these three points\footnote{These points are correspond to \textit{motic subgraphs}, defined in \cite[\S 3]{Brown2}, of $G$. They are the three subgraphs spanned by three distinct pairs of edges}, and denote the blow-up by $\pi_G: P^G \rightarrow \mathbb{P}^2$. Let, by abuse of notation, $\mathcal{E}$ denote the strict transform of the vanishing locus of the second Symanzik polynomial $\Xi_G$, and denote the total transform of $\Delta_G = \bigcup_{e \in E_G} V(\alpha_e)$ in $\mathbb{P}^2$ by $D$. The divisor $D$ has 6 irreducible components, and we denote the exceptional divisors by $D_{-1},D_{-2},D_{-3}$, their union by $D_E$, and the strict transforms of the faces $V(\alpha_e)$ of the coordinate simplex in $\mathbb{P}^2$ by $D_1,D_2,D_3$. Geometrically we have the following picture.

\begin{figure}[h]
\centering
    \includegraphics[width=10cm]{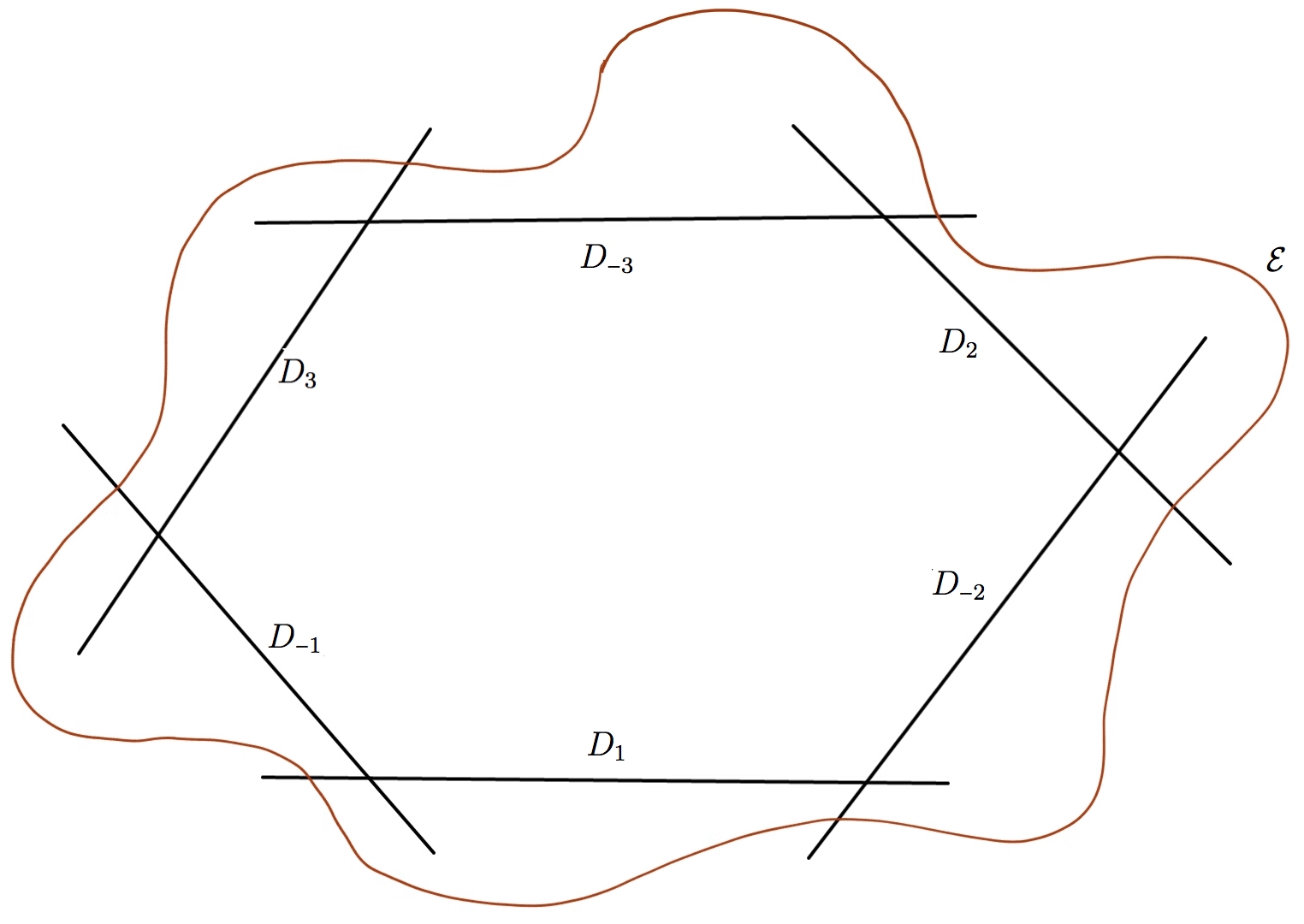}
\caption{The geometry underlying the sunrise graph after blowing up.}
\label{figure_sunrise_restricted}
\end{figure}

The Betti realization of the graph motive is given by $R^n\pi_*j_!\Q,$
where $j : D \setminus \mathcal{E} \cap D \setminus D \hookrightarrow P^G \setminus \mathcal{E}$. We define $S$ so that the Betti realisation is a local system on its complex points\footnote{$S$ is the complemenet in the space of generic kinematics of a closed subscheme typically referred to as the Landau variety.}. We denote it by $H^n_{\Beta}(P^G \setminus \mathcal{E}, D \setminus \mathcal{E} \cap D)_{/S}$ as its fiber at $t \in S(k)$ is the Betti cohomology of the complement of $\mathcal{E}_t$ in $P^G$ relative to the divisor $D \setminus \mathcal{E}_t \cap D$. 

Let $D_J = \cap_{j \in J} D_j$, and let $\Omega_{D_J/S}^{\bullet}$ denote the sheaf on $P^G \setminus \mathcal{E}$ which is the direct image of the corresponding sheaves of Kähler differentials on $D_J$, and which vanishes outside of $D_J$. Let 
 $I = \{\pm1,\pm2,\pm3\}$. Consider the double complex of sheaves on $P^G \setminus \mathcal{E}$
\begin{equation}
\label{relative-double-complex}
\Omega_{D_{\bullet}/S}^{\bullet} : \quad \Omega_{P^G \setminus \mathcal{E}/S}^{\bullet} \rightarrow \bigoplus\limits_{j\in I} \Omega_{D_j/S}^{\bullet} \rightarrow \bigoplus\limits_{\substack{J \subset \in I, \\|J|=2}} \Omega_{D_J/S}^{\bullet}
\end{equation}
where the horizontal maps are pullbacks along inclusions $D_{j} \hookrightarrow P^G \setminus \mathcal{E}$ with alternating signs. Then define the de Rham realization of the graph motive to be
\begin{equation}
\label{relative-de-rham-definition}
H^2_{\mathrm{dR}}(P^G \setminus \mathcal{E},D \setminus \mathcal{E} \cap D)_{/S} =\mathbb{R}^2\pi_*(\textrm{Tot}^{\bullet}(\Omega_{D_{\bullet}/S}^{\bullet})),
\end{equation}
where $\textrm{Tot}^{\bullet}$ denotes the total complex. It has a flat connection
\[
\nabla : H^2_{\mathrm{dR}}(X,D)_{/S} \rightarrow H^2_{\mathrm{dR}}(X,D)_{/S} \otimes \Omega^1_{S/k} 
\]
by a relative version of \cite{KO}. Its fiber at $t$ is the algebraic de Rham cohomology of the complement of $\mathcal{E}_t$ in $P^G$ relative to the divisor $D \setminus \mathcal{E}_t \cap D$. 

We put everything together to define the 'motive' of the sunrise graph, i.e. the object in the category $\mathcal{H}(S)$ given by the triple
\begin{equation}
(H^2_{\Beta}(P^G \setminus \mathcal{E}, D \setminus \mathcal{E} \cap D)_{/S},H^2_{\dR}(P^G \setminus \mathcal{E}, D \setminus \mathcal{E} \cap D)_{/S}, c),
\end{equation}
where the comparison $c$ is given by \cite[Proposition 2.28]{Deligne4}. It follows from Saito's theory of mixed Hodge modules \cite{Saito88,Saito90} that the Betti realization is a (admissible, graded-polarizable) variation of mixed Hodge structures. We will drop the notation $/S$ henceforth to avoid cluttering, and state explicitly when we are working with a fiber over $S$.

\begin{prop}
The weight-graded (semi-simple) object associated to each fiber $M$ of $mot_G$ is 
$$
gr^W M = H^1(\mathcal{E})(-1) \bigoplus \mathbb{Q}(-1)^{\oplus 3} \bigoplus \mathbb{Q}(0)
$$
\end{prop}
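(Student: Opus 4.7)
The plan is to compute $gr^W M$ via the long exact sequence of the pair
$(P^G \setminus \mathcal{E},\, D \setminus \mathcal{E}\cap D)$ in the category of mixed Hodge structures
$$
\cdots \to H^{n-1}(D\setminus\mathcal{E}\cap D) \to H^n(P^G\setminus\mathcal{E},\, D\setminus\mathcal{E}\cap D) \to H^n(P^G\setminus\mathcal{E}) \to H^n(D\setminus\mathcal{E}\cap D) \to \cdots ,
$$
reducing the problem to $H^*(P^G\setminus\mathcal{E})$ and $H^*(D\setminus\mathcal{E}\cap D)$ separately, and then using strictness of the weight filtration to read off $gr^W M$.

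For $H^*(P^G\setminus\mathcal{E})$, I would apply the Gysin sequence
$$
\cdots \to H^{n-2}(\mathcal{E})(-1) \to H^n(P^G) \to H^n(P^G\setminus\mathcal{E}) \to H^{n-1}(\mathcal{E})(-1) \to \cdots .
$$
The blow-up formula gives $H^0(P^G)=\mathbb{Q}(0)$, $H^1(P^G)=0$, and $H^2(P^G)=\mathbb{Q}(-1)^{\oplus 4}$ (the hyperplane class plus the three exceptional classes). Since $\mathcal{E}$ represents a nonzero class, the cycle-class map $H^0(\mathcal{E})(-1)=\mathbb{Q}(-1) \to H^2(P^G)$ is injective, yielding $H^0(P^G\setminus\mathcal{E})=\mathbb{Q}(0)$, $H^1(P^G\setminus\mathcal{E})=0$, and a short exact sequence
$$
0 \to \mathbb{Q}(-1)^{\oplus 3} \to H^2(P^G\setminus\mathcal{E}) \to H^1(\mathcal{E})(-1) \to 0 .
$$

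For $H^*(D\setminus\mathcal{E}\cap D)$, I would first describe $D$ combinatorially: blowing up the point $V(\alpha_i)\cap V(\alpha_j)$ separates the strict transforms $D_i$ and $D_j$, so the dual graph of $D$ is a bipartite hexagon with vertices $D_{\pm 1}, D_{\pm 2}, D_{\pm 3}$, each $\mathbb{P}^1$. Next one checks that $\mathcal{E}$ meets each of the six components in exactly one point: the restriction $\Xi_G|_{\alpha_1=0}=\alpha_2\alpha_3(m_2^2\alpha_2+m_3^2\alpha_3)$ vanishes at three points on $V(\alpha_1)$, two of which are blow-up centres, leaving one intersection with $D_1$; and the lowest-order terms of $\Xi_G(1,\alpha_2,\alpha_3)$ are $m_1^2(\alpha_2+\alpha_3)$, giving a single tangent direction so that the strict transform of $\mathcal{E}$ meets $D_{-1}$ transversally at one point. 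Therefore $D\setminus\mathcal{E}\cap D$ is a hexagonal cycle of six copies of $\mathbb{A}^1$ glued at the six corners. A Mayer--Vietoris computation (equivalently, deformation retraction onto the hexagon of corners) then yields
$$
H^0(D\setminus\mathcal{E}\cap D)=\mathbb{Q}(0), \quad H^1(D\setminus\mathcal{E}\cap D)=\mathbb{Q}(0),
$$
all higher groups vanishing, and everything pure of weight $0$ since the strata are $\mathbb{A}^1$'s and points.

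Substituting into the long exact sequence of the pair, the vanishing of $H^1(P^G\setminus\mathcal{E})$ and $H^2(D\setminus\mathcal{E}\cap D)$ gives
$$
0 \to \mathbb{Q}(0) \to M \to H^2(P^G\setminus\mathcal{E}) \to 0 ,
$$
so $gr^W M$ has the three graded pieces $\mathbb{Q}(0)$, $\mathbb{Q}(-1)^{\oplus 3}$, and $H^1(\mathcal{E})(-1)$, in weights $0$, $2$, $3$ respectively. The main obstacle is the geometric check in the second step that $\mathcal{E}$ meets each component of $D$ transversally at exactly one point; this uses both that $\mathcal{E}$ is smooth at each blow-up centre (since the linear part of $\Xi_G$ is nonzero for generic kinematics with $m_i^2\ne 0$) and that on each coordinate line two of the three intersection points are precisely the blown-up centres, so the rest follows formally from standard long exact sequences in mixed Hodge theory.
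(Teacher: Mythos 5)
Your proof is correct, but it runs the two layers of the computation in the opposite order from the paper. The paper first applies the Gysin/residue sequence for the smooth divisor $\mathcal{E}$ directly on the relative object $M=H^2(P^G\setminus\mathcal{E},D\setminus\mathcal{E}\cap D)$, obtaining
$0\to H^2(P^G,D)\to M\to H^1(\mathcal{E},D\cap\mathcal{E})(-1)\to H^3(P^G,D)$,
and then has to (i) compute $H^2(P^G,D)$ and $H^3(P^G,D)$ from the pair sequence for $(P^G,D)$ and (ii) argue that the Gysin map $H^1(\mathcal{E},D\cap\mathcal{E})(-1)\to H^3(P^G,D)$ is \emph{surjective} (by showing $H^3(P^G\setminus\mathcal{E},D\setminus\mathcal{E}\cap D)=0$), in order to identify $\mathrm{Im}(res)$. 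You instead apply the pair sequence first, reducing to $H^\ast(P^G\setminus\mathcal{E})$ and $H^\ast(D\setminus\mathcal{E}\cap D)$, and then compute $H^\ast(P^G\setminus\mathcal{E})$ with the Gysin sequence on the absolute cohomology. This trades the paper's surjectivity argument for a direct Mayer--Vietoris computation of $H^\ast(D\setminus\mathcal{E}\cap D)$ as the cohomology of a hexagonal ring of affine lines, which requires the geometric check that $\mathcal{E}$ meets each of the six components of $D$ transversally in exactly one point (a fact the paper also uses implicitly, phrased as $\#(\mathcal{E}\cap D)=6$, together with the computations of $H^1(D)$ and $H^2(D)$). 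Both routes are valid and roughly the same length; yours is arguably cleaner in that everything is read off from two short exact sequences and the injectivity of the cycle class of $\mathcal{E}$, with no need to separately verify surjectivity of a Gysin map.
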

\begin{proof}
Since $\mathcal{E}$ is smooth the residue/Gysin long exact sequence applies to 
$$
M = H^2(P^G \setminus \mathcal{E}, D \setminus \mathcal{E} \cap D)
$$ so that we have 
\begin{equation}
\label{residue_seq_sunrise}
0 \rightarrow H^2(P^G,D) \rightarrow M \xrightarrow{res} H^1(\mathcal{E},D\cap\mathcal{E})(-1) \xrightarrow{Gysin} H^3(P^G,D) .
\end{equation}
We can apply the relative cohomology long exact sequence to $H^2(P^G,D)$, part of which is
$$
H^1(P^G) \rightarrow H^1(D) \rightarrow H^2(P^G,D) \rightarrow H^2(P^G) \rightarrow H^2(D) . 
$$
On the right we have $H^2(D) = \mathbb{Q}(-1)^{\oplus 6}$ since $D$ is a union of 6 projective lines --- the three exceptional divisors and the three strict transforms of the coordinate hyperplanes. We have $H^2(P^G) = \mathbb{Q}(-1)^{\oplus 4}$, where one of the classes corresponds to $H^2(\mathbb{P}^2)$ and is represented by a general projective line in $\mathbb{P}^2$, while the remaining three correspond to the three exceptional divisors. The rightmost arrow is therefore injective. Furthermore, we have $H^1(P^G) = 0$, and $H^1(D) = \mathbb{Q}(0)$ since $D$ is a union of 6 projective lines such that each two meet in a single point. It follows that $H^2(P^G,D) = \mathbb{Q}(0)$.

We can apply the relative cohomology long exact sequence to $H^1(\mathcal{E},D \cap \mathcal{E})(-1)$ to get 
$$
0 \rightarrow \widetilde{H}^0(D \cap \mathcal{E})(-1) \rightarrow H^1(\mathcal{E},D \cap \mathcal{E})(-1) \rightarrow H^1(\mathcal{E})(-1) \rightarrow 0
$$
where on the left hand side we have reduced cohomology. There are six points of intersection $\mathcal{E}\cap D$ so the rank of $\widetilde{H}^0(D \cap \mathcal{E})(-1)$ is 5. 

Finally, we need to know the image of the residue map in \eqref{residue_seq_sunrise}, or equivalently the kernel of the Gysin morphism. From the relative cohomology long exact sequence we have that
$$
H^2(P^G) \rightarrow H^2(D) \rightarrow H^3(P^G,D) \rightarrow H^3(P^G).
$$
Here $H^3(P^G)=0$, while $H^2(P^G)$ and $H^2(D)$ have ranks 4 and 6 respectively. It follows that $H^3(P^G,D)$ has rank 2. One can see that the Gysin map in \eqref{residue_seq_sunrise} is surjective, for example by showing that $H^3(P^G \setminus \mathcal{E}, D \setminus \mathcal{E} \cap D)=0$ using the relative cohomology long exact sequence. Therefore we coclude that 
$$
gr^W \textrm{Im}(res) = H^1(\mathcal{E})(-1) \bigoplus \mathbb{Q}(-1)^{\oplus 3}  .
$$
The result follows.
\end{proof}
\end{subsection}

\begin{subsection}{Basis of de Rham cohomology}
\label{basis of de Rham cohomology section}
\begin{subsubsection}{The Feynman integrand and the holomorphic form on $\mathcal{E}$}
\label{sunrise Feynman integrand}
We may check by direct computation\footnote{See \cite[\S 6.6]{Brown2} for a more general argument for all Feynman graphs.} that $\pi_G^*(\omega_G(m,q))$, where $\omega_G(m,q) = \frac{\Omega_G}{\Xi_G}$ is the Feynman integrand for the sunrise graph in $d=2$ space-time dimensions, does not acquire any poles along the exceptional divisors. Therefore it defines a section of $(mot_G)_{\dR}$ over $S$. We also note that we have a commutative diagram 
\begin{equation}
\label{blow-up residue commute diagram}
\xymatrix{
            H^2(\mathbb{P}^2 \setminus \mathcal{E}) \ar[d]^{res} \ar[r]& H^2(P^G \setminus \mathcal{E}) \ar[d]^{res}\\
            H^1(\mathcal{E})(-1) \ar[r]^{=} & H^1(\mathcal{E})(-1).
            }
\end{equation}
The map on the top row is induced by $i \circ p^{-1}$
$$
P^G \setminus \mathcal{E} \overset{i}{\hookleftarrow} P^G \setminus (\mathcal{E} \cup D_{E}) \overset{p}{\underset{\cong}{\longrightarrow}} \mathbb{P}^2 \setminus \mathcal{E},
$$
where on the right we have the restriction of the projection $\pi_G$ to the complement of the exceptional divisors where it is an isomorphism. A well known theorem of Griffiths \cite[8.6]{G} tells us that, for $X \subset \mathbb{P}^n$ a smooth projective hypersurface, we have an isomorphism induced by the residue map
$$
A^n_k(X)/dA^{n-1}_{k-1}(X) \overset{res}{\underset{\cong}{\rightarrow}} F^{n-k}H^{n-1}_{\Beta}(X;\mathbb{C})_{prim}
$$
where on the left hand side we have $A^n_k(X)$ rational $n$-forms on $\mathbb{P}^n$ with poles of order $k$ along $X$, and on the right hand side we have the primitive part of the Betti cohomology of $X$\footnote{For $X \subset \Pp^n$ a smooth projective variety, and $\xi \in H^2(X;\CC)$ a hyperplane section the primitive cohomology of $X$ is defined as $H^{n-k}(X;\CC)_{prim}=\ker\left(\bigcup \xi^{k+1} : H^{n-k}(X;\CC) \rightarrow H^{n+k+2}(X;\CC)\right)$.}. When $n$ is even, as is our case, all cohomology of $X$ is primitive \cite[Lemma 8.20]{G}. Using this result in the case $n=2,k=1$ and $X = \mathcal{E}$, along with the commutativity of the diagram \eqref{blow-up residue commute diagram}, we obtain that $res((\pi_G^*(\omega_G(m,q)))$ is the holomorphic differential on $\mathcal{E}$, or, in other words, $$res((\pi_G^*(\omega_G(m,q))) \in F^1H^1_{\dR}(\mathcal{E}) = F^2H^1_{\dR}(\mathcal{E})(-1).
$$ 

We note that $[\pi_G^*(\omega_G(m,q))]$ is of weight 3. This will be used later when we complete the de Rham basis, as well as when we show that the Feynman integral associated to the sunrise graph can be expressed in terms of integrals of modular forms.

\end{subsubsection}

\subsubsection{Weight 0}

That there are relations between periods of different Feynman graphs arising from the recursive structure of graph motives \cite[\S7.2]{Brown2}. In particular for any subgraph $\gamma \subset G$, where we consider $\gamma$ to have no dependence on kinematic parameters unless it contains all the edges of $G$ with non-trivial masses and momenta, we have morphisms in the category $\mathcal{H}(S)$ of graph motives
\begin{equation}
\label{face_map}
mot_{\gamma} \otimes mot_{G/\gamma} \rightarrow mot_G,
\end{equation}
where $G/\gamma$ is the graph obtained by contracting $\gamma$ \cite[Theorem 7.8]{Brown2}. As morphisms in $\mathcal{H}(S)$ these morphism have a de Rham and a Betti component, i.e. a morphism $\phi: (\mathcal{V}_{\dR,1},\mathbb{V}_{\Beta,1},c_1) \rightarrow (\mathcal{V}_{\dR,2},\mathbb{V}_{\Beta,2},c_2)$ consists of morphisms $\phi_{\dR}: \mathcal{V}_{\dR,1} \rightarrow \mathcal{V}_{\dR,2}$ and $\phi_{\Beta}: \mathbb{V}_{\Beta,1} \rightarrow \mathbb{V}_{\Beta,2}$ which commute with comparison isomorphisms $c_1,c_2$. In our case, we use the de Rham component of such a morphism, where we take $\gamma = G \setminus e_3$\footnote{We can equally take $G\setminus e_1$ or $G\setminus e_2$, their graph motives are the same.} to be the two-edge one loop `bubble' graph \eqref{general-bubble} with trivial masses and momenta. Its graph motive is $$mot_{G \setminus e_3} = H^1(\mathbb{P}^1\setminus [1:-1],\{[1:0],[0:1]\}).$$

\label{weight-0-sunrise}
\begin{lemma}
  Let $\nu_0$ be the integrand of the bubble graph $\gamma = G \setminus e_3$ with no kinematic dependence in $d=4$ dimensions. Consider the morphism
  $$
  \phi: mot_{G \setminus e_3} \rightarrow mot_G
  $$
  in $\mathcal{H}(S)$. Then $\phi_{\rm{dR}}([\nu_0])$ generates $W_0(mot_G)_{\rm{dR}}$.
\end{lemma}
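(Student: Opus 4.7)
My plan is to exploit the fact that $\gr^W_0(mot_G) = \mathbb{Q}(0)$ is one-dimensional (from the preceding weight-graded proposition), so that $W_0(mot_G)_{\dR}$ is also one-dimensional; it then suffices to show $\phi_{\dR}([\nu_0])$ is a nonzero element of $W_0$.

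I first observe that $mot_{G\setminus e_3} = H^1(\mathbb{P}^1\setminus[1:-1],\{[1:0],[0:1]\})$ is pure of weight $0$: the sequence \eqref{ses} combined with $H^1(\mathbb{P}^1\setminus[1:-1]) = 0$ gives $mot_{G\setminus e_3} \cong \widetilde{H}^0(\{[1:0],[0:1]\}) \cong \mathbb{Q}(0)$. Writing $f = \alpha_1/(\alpha_1+\alpha_2)$, the relation $f([1:0]) - f([0:1]) = 1$ shows that $[\nu_0] = [df]$ is a generator of $(mot_{G\setminus e_3})_{\dR}$. Since $\phi$ is a morphism in $\mathcal{H}(S)$ it preserves the weight filtration, so $\phi_{\dR}([\nu_0]) \in W_0(mot_G)_{\dR}$ automatically. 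Furthermore, $mot_{G\setminus e_3} \cong \mathbb{Q}(0)$ is simple, so any nonzero morphism out of it is injective on the de Rham realisation; hence it suffices to establish $\phi \neq 0$.

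To verify this I would compute $\phi_{\dR}([\nu_0])$ directly in the relative de Rham double complex \eqref{relative-double-complex}. The face map of \cite[Theorem 7.8]{Brown2} for the edge $e_3$ is induced by the natural inclusion of the subcomplex carried by the component $D_3$, so $\phi_{\dR}([\nu_0])$ is represented by the total $2$-cocycle whose only nonzero entry is $df$ placed in the $D_3$-summand of $\bigoplus_j \Omega^1_{D_j/S}$. A coboundary adjustment by the $0$-cochain equal to $f$ on $D_3$ and zero elsewhere moves this into the row-$0$ part, yielding a cocycle in $\bigoplus_{|J|=2}\mathcal{O}(D_J)$ whose only nonzero entries are the scalars $f|_{D_3\cap D_{-1}} = 1$ and $f|_{D_3\cap D_{-2}} = 0$. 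The nerve of $D$ is a hexagon (each $D_{-i}$ meets the two $D_j$ with $j \neq i$), so $H^1$ of this nerve is $\mathbb{Q}$, and a short direct check shows that the above cocycle is not a coboundary. Combined with the injection $H^1(D) \hookrightarrow H^2(P^G,D) = W_0(mot_G)_{\dR}$ coming from the analysis in the preceding proposition, this gives $\phi_{\dR}([\nu_0]) \neq 0$, and therefore it generates $W_0(mot_G)_{\dR}$.

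The main obstacle will be to spell out carefully how the face map of \cite{Brown2} acts at the level of the relative de Rham double complex \eqref{relative-double-complex}; the construction there is abstract and one needs to verify that the natural inclusion of the $D_3$-subcomplex really implements $\phi_{\dR}$. Once this is in place the remaining combinatorics on the hexagonal nerve of $D$ reduce to the elementary computation outlined above.
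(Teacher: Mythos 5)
Your overall strategy is genuinely different from the paper's and, modulo one real error, it works. The paper proceeds by constructing an explicit global form $\omega_0 = m_3^4\alpha_3^4(\alpha_1 d\alpha_2-\alpha_2 d\alpha_1)/\Xi_G^2$, pulling it back to $P^G$, restricting to the exceptional divisor in the chart $\mathbb{A}_{12,2}$, and checking by hand that this restriction coincides with the bubble integrand; non-triviality is then read off from non-exactness of the restriction. You instead reduce the problem to showing $\phi\neq 0$ (using that $mot_{G\setminus e_3}\cong\QQ(0)$ is simple and that morphisms in $\mathcal{H}(S)$ respect weights), and then run a purely combinatorial argument on the hexagonal nerve of $D$, feeding it into the chain $H^1(D)\hookrightarrow H^2(P^G,D)\hookrightarrow mot_G$ extracted from the preceding proposition. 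This is a cleaner conceptual route and it avoids the explicit numerator $\alpha_3^4/\Xi_G^2$ that the paper uses, at the cost of having to unwind the face map at the level of the relative \v{C}ech--de Rham double complex.

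There is, however, a concrete error in your identification of where the face map lands. You say the face map ``for the edge $e_3$'' is carried by the component $D_3$, i.e.\ the strict transform of $V(\alpha_3)$. This is not right. The subgraph in play is $\gamma=G\setminus e_3=\{e_1,e_2\}$, which is a motic subgraph corresponding to the blown-up point $[0:0:1]=V(\alpha_1)\cap V(\alpha_2)$; the face map $mot_\gamma\otimes mot_{G/\gamma}\to mot_G$ from \cite[Theorem 7.8]{Brown2} is supported on the \emph{exceptional} divisor $D_{-3}$ over that point, not on the coordinate face $D_3$. The paper's own computation makes this explicit: it works in the chart $\mathbb{A}_{12,2}$ with $\pi_G^*\alpha_1=u$, $\pi_G^*\alpha_2=uv$, restricts to $\{u=0\}=D_{-3}$, and identifies the result with the bubble integrand. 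In your argument the correct coboundary adjustment places nonzero constants on the intersections $D_{-3}\cap D_1$ and $D_{-3}\cap D_2$, not $D_3\cap D_{-1}$ and $D_3\cap D_{-2}$. You happen to get away with it only because $H^1(D)\cong\QQ$ is one-dimensional and any nonzero $1$-cochain on the hexagon with a single unit entry represents a generator; but the cocycle you wrote down does not actually represent $\phi_{\dR}([\nu_0])$, and following your picture literally would attribute $\nu_0$ to the wrong boundary divisor of $P^G$. After swapping $D_3\leftrightarrow D_{-3}$ throughout (and, as you yourself flag, pinning down the face map at the cochain level), the argument is sound.
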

\begin{proof}
Without loss of generality we delete the edge $e_3$ of $G$. In $d=4$ the Feynman integrand of $G \setminus e_3$ is
$$
\nu_0 = \frac{\alpha_1 d\alpha_2-\alpha_2 d\alpha_1}{(\alpha_1+\alpha_2)^2}.
$$
Let $\mathbb{A}_{12,2} = \Sp(k[u,v])$ be an affine open in $P^G$ where we have $(\pi_G|_{\mathbb{A}_{12,2}})^*(\alpha_1)=u$ and $(\pi_G|_{\mathbb{A}_{12,2}})^*(\alpha_2)=uv$, and let us consider the form
$$
\omega_0 = \frac{m_3^4\alpha_3^4(\alpha_1d\alpha_2-\alpha_2d\alpha_1)}{\Xi_G(m,q)^2} .
$$
Note that 
$$
\pi_G^*(\omega_0)|_{\mathbb{A}_{12,2}} = \frac{m_3^4dv}{(uv+(m_1^2u+m_2^2uv+m_3^2)(uv+v+1))^2},
$$
When this is restricted the exceptional divisor in $\mathbb{A}_{12,2}$, which is given by $u=0$ in this chart, we get
$$
\pi_G^*(\omega_0)|_{\mathbb{A}_{12,2}\cap\{u=0\}} = \frac{dv}{(1+v)^2}.
$$
We can see then that we have
$$
\pi_G^*(\omega_0)|_{\mathbb{A}_{12,2}\cap\{u=0\}} = \phi_{\rm{dR}}|_{\alpha_1 \not = 0}(\nu_0|_{\alpha_1 \not = 0}).
$$
Analogously, if we replace $\mathbb{A}_{12,2}$ by $\mathbb{A}_{12,1}$ with $(\pi_G|_{\mathbb{A}_{12,1}})^*(\alpha_1)=uv$ and $(\pi_G|_{\mathbb{A}_{12,1}})^*(\alpha_2)=v$, and $\alpha_1 \not = 0$ by $\alpha_2 \not = 0$, we get that $[\pi_G^*(\omega_0)|_{D_{-3}}] = [\phi_{\rm{dR}}(\nu_0)]$. The fact that the restriction of $\pi_G^*(\omega_0)$ to the exceptional divisor is not exact implies that its differential defines a non-trivial class in relative cohomology $$
[\phi_{\rm{dR}}(\nu_0)] = [\pi_G^*(\omega_0)|_{D_{-3}}] = [d(\pi_G^*(\omega_0))] \in (mot_G)_{\rm{dR}}
$$
This class spans the weight zero subspace $W_0 (mot_G)_{\rm{dR}}$.
\end{proof}

\begin{figure}[h]
\centering
\begin{tikzpicture}

\SetGraphUnit{3}
  
  \SetUpEdge[lw = 1pt,
  color      = black,
  labelcolor = white,
  labelstyle = {sloped,above,yshift=2pt}]
  
  \SetUpVertex[FillColor=black, MinSize=8pt, NoLabel]

  \Vertex[x=2,y=0]{1}
  \Vertex[x=6,y=0]{2}
  \Vertex[x=0,y=0,empty=true]{4}
  \Vertex[x=8,y=0,empty=true]{5}

  \tikzset{EdgeStyle/.append style = {bend left}}
  \Edge[label=1](1)(2)
  \Edge[label=2](2)(1)
  \tikzset{EdgeStyle/.style={postaction=decorate,decoration={markings,mark=at position 0.7 with {\arrow{latex}}}}}
  \Edge(4)(1)
  \Edge(5)(2)
\end{tikzpicture}
\caption{The bubble graph with no kinematic dependence.}
\label{general-bubble}
\end{figure}
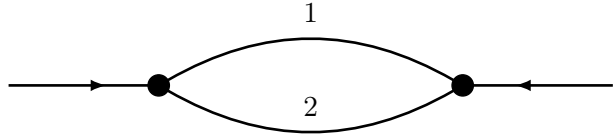

\begin{subsubsection}{Completing the de Rham basis}
\label{completing the de rham basis}
Only a 1-dimensional subspace of the de Rham realization of the sunrise graph motive is spanned by the image under \eqref{face_map} of the de Rham class of the Feynman integrand of a subquotient graph with a strictly smaller number of edges. This follows from the relative cohomology long exact sequence
$$
0 \rightarrow H^1(D \setminus \mathcal{E} \cap D) \rightarrow mot_G \rightarrow H^2(P^G \setminus \mathcal{E}) \rightarrow 0, 
$$
where on the left hand side we have $H^1(D \setminus \mathcal{E} \cap D) = \mathbb{Q}(0)$. This is similar to the triangle graph with all three vanishing masses (see \cite[\S 5.2]{MatijaT}). It contrasts with, for example, the case of the one-loop four-edge graph with all three non-vanishing masses (see \cite[\S 4]{MatijaT}), where the images of the de Rham classes of Feynman integrands of subquotient graphs with strictly fewer edges, along with the Feynman integrand of the one-loop four-edge graph itself, form a basis of the de Rham realization of its graph motive.

We will use the Feynman integrands of graphs obtained from the sunrise by iterated subdivision of edges to complete $\{\phi_{\dR}(\nu_0),[\pi_G^*(\omega_G)]\}$ to a basis of $(mot_G)_{\dR}$ via the de Rham component of morphisms of graph motives defined in \S\ref{section: subdivison of edges}. Consider the following diagram
$$
\xymatrix@C=1.5em{
            0 \ar[r] & H^1(D \setminus \mathcal{E} \cap D) \ar[r] & H^2(P^G \setminus \mathcal{E}, D \setminus \mathcal{E} \cap D) \ar[d]^{res} \ar[r] & H^2(P^G \setminus \mathcal{E}) \ar[r] & 0\\
            0 \ar[r] & \widetilde{H}^0(\mathcal{E} \cap D)(-1) \ar[r]^{p} & H^1(\mathcal{E}, \mathcal{E} \cap D)(-1) \ar[r] & H^1(\mathcal{E})(-1) \ar[r] & 0
            }
$$
The object on the top left is isomorphic to $\mathbb{Q}(0)$, and the image of its de Rham realization is spanned by $\phi_{\dR}([\nu_0])$ \ref{weight-0-sunrise}. The form $\pi_G^*(\omega_G)$ lifts the holomorphic differential on $\mathcal{E}$. We now want to choose a lift of a differential of the second kind on $\mathcal{E}$ and the three classes in $\Im(p) \cap \Im(res)$ via the residue morphism. Let us order the points $\mathcal{E} \cap D$ and denote $P_{2i-1} = D_{-i} \cap \mathcal{E}$ and $P_{2i} = D_{i} \cap \mathcal{E}$ for $1 \leq i \leq 3$. As discussed in \S\ref{standard elliptic de rham realization} a basis of $H_{\dR}^1(\mathcal{E},D \cap \mathcal{E})$ is given by a differential form of the first kind on $\mathcal{E}$, a differential form of the second kind on $\mathcal{E}$ and images of classes in $\tilde{H}^0(D \cap \mathcal{E})$ under the map
$$
\tilde{H}_{\dR}^0(D \cap \mathcal{E}) \rightarrow H^1_{\dR}(\mathcal{E},D \cap \mathcal{E}),
$$
which are given by the classes $[df_i]$ where $f_i$ is a meromorphic function on $\mathcal{E}$ which takes the value $1$ at the point $P_i$ and $0$ at $P_j$ for $j \not = i$, for $1 \leq i \leq 6$. They satisfy the relation $[df_1]+\ldots+[df_6] = 0$.

\begin{prop}
\label{basis-elliptic-dr}

Define $\eta_G$ to be the form such that $\eta_G \wedge dy$ is the pullback via $\tilde{\rho}$ of the Feynman integrand associated to the graph $G_{s(e_1)}$ in $d=2$ space-time dimensions. In addition, define $\nu_1,\nu_2,\nu_3$ to be the forms such that $\nu_i \wedge \wedge_{j=1}^3 dy_j$, for $1 \leq i \leq 3$ are the pullbacks via $\tilde{\rho}$ of the Feynman integrands associated to the graphs 
$G_{s(e_1,e_2^2)},G_{s(e_1^2,e_3)},G_{s(e_2,e_3^2)}$ in $d=4$ respectively (see figure \ref{fig:subdivided sunrise}). Then the form $\eta_G$ is sent to a form of the second kind on $\mathcal{E}$ by the residue morphism. Moreover, the images of $\omega_G,\eta_G,\nu_1,\nu_2,\nu_3$ under the residue morphism are linearly independent over $k(m,q)$. Hence their classes, along with the class of $\phi_{\dR}(\nu_0)$, form a basis of $(mot_G)_{\dR}$.
\end{prop}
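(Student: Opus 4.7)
The strategy is to exploit the weight structure of $mot_G$ together with explicit Griffiths residue computations. By the graded description $gr^W mot_G = H^1(\mathcal{E})(-1) \oplus \mathbb{Q}(-1)^{\oplus 3} \oplus \mathbb{Q}(0)$, the de Rham realization is six-dimensional. We already know that $\phi_{\dR}([\nu_0])$ spans $W_0$ and, by the Griffiths argument of \S\ref{sunrise Feynman integrand}, that $\pi_G^*(\omega_G)$ lies in weight $3$ with residue equal to the holomorphic form on $\mathcal{E}$. Since linear independence can be tested on the associated graded, it suffices to verify (a) $[\eta_G]$ has weight $3$ and its residue projects to a non-holomorphic (second-kind) class in $H^1_{\dR}(\mathcal{E})$, and (b) $[\nu_1],[\nu_2],[\nu_3]$ lie in $W_2$ and map to a basis of the quotient $H^1_{\dR}(\mathcal{E},\mathcal{E}\cap D)/H^1_{\dR}(\mathcal{E})\cong \widetilde{H}^0_{\dR}(\mathcal{E}\cap D)(-1)$.

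First I would write the forms explicitly using Corollary~\ref{subdivided motivic period}. For $G_{s(e_1)}$ in $d=2$ we have $K=1$ and $d_{G_{s(e_1)}}-d_G=0$, so $z_1=1,\,z_2=-1$ and
$$
\eta_G = -\alpha_1\,\Psi_G\,\Omega_G/\Xi_G^2 .
$$
For $G_{s(e_1,e_2^2)}, G_{s(e_1^2,e_3)}, G_{s(e_2,e_3^2)}$ in $d=4$ we have $K=3$, $d_{G_{s(I)}}-d_G=2$, so $z_1=0,\,z_2=-1$ and
$$
\nu_i = -\alpha_1^{k_1}\alpha_2^{k_2}\alpha_3^{k_3}\,\Omega_G/\Xi_G^2, \qquad (k_1,k_2,k_3)\in\{(1,2,0),(2,0,1),(0,1,2)\}.
$$
Each is a rational $2$-form on $\mathbb{P}^2$ with a double pole along $\mathcal{E}=V(\Xi_G)$, and the diagram \eqref{blow-up residue commute diagram} lets me compute residues before blow-up via the Griffiths reduction for double-pole forms. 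For (a), I would apply the second-order reduction to $\eta_G$: find a rational $1$-form $\beta$ on $\mathbb{P}^2\setminus\mathcal{E}$ such that $\eta_G-d\beta$ has only a simple pole along $\mathcal{E}$, take its Poincar\'e residue, and check that the resulting $1$-form on $\mathcal{E}$ is not in $F^1 H^1_{\dR}(\mathcal{E})=H^0(\mathcal{E},\Omega^1)$. The presence of the factor $\Psi_G$ in the numerator — which distinguishes $\eta_G$ from the simple-pole form $\Omega_G/\Xi_G$ whose residue is holomorphic — will produce the non-holomorphic component. For (b), I would compute $\operatorname{res}(\nu_i)$ as a $1$-form on $\mathcal{E}$ with poles along $\mathcal{E}\cap D$ in local coordinates near each $P_j$; the vanishing of $\alpha_1^{k_1}\alpha_2^{k_2}\alpha_3^{k_3}$ along two distinct coordinate hyperplanes (different pairs for different $i$) forces the local residue of $\operatorname{res}(\nu_i)$ to vanish at the corresponding $P_j$ and be supported on the complementary ones, so that the $3\times 6$ matrix of local residues, viewed in $k(m,q)^6/\langle\sum_j [df_j]=0\rangle$, is easily seen to have rank three.

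The main obstacle is executing the two residue assertions rigorously over the field $k(m,q)$. For $\eta_G$ this means carrying out the Griffiths reduction in a suitable affine chart of $\mathbb{P}^2$ — say $\alpha_3=1$ — and identifying the resulting class modulo $F^1$ by examining its behaviour at a non-Weierstrass point, confirming that the $\Psi_G$-factor contributes a genuinely second-kind meromorphic differential and is not cancelled by an exact form. For the $\nu_i$ it requires writing down explicit local coordinates at each of the six points $P_j=\mathcal{E}\cap D_j$ (three at exceptional divisors and three at strict transforms of the coordinate lines), verifying that for generic $(m,q)$ the local residues are non-zero where expected, and checking the $3\times 6$ matrix has rank three over $k(m,q)$ — both steps amount to polynomial algebra in $m_e,q_1$ that is routine but must be handled with care to ensure no degenerations occur on the open locus $S$.
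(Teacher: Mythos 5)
Your overall strategy — pass to the residue image in $H^1_{\dR}(\mathcal{E},\mathcal{E}\cap D)(-1)$ and compute Griffiths residues explicitly — is the same as the paper's, and your explicit formulas for $\eta_G$ and the $\nu_i$ are correct. For the second-kind assertion about $\eta_G$ you propose doing the pole-order reduction by hand and examining the result; the paper instead invokes Griffiths' Jacobian-ideal criterion (the numerator $\alpha_1\Psi_G$ is not in the Jacobian ideal of $\Xi_G$, so the pole order cannot be lowered and the residue is not in $F^1H^1_{\dR}(\mathcal{E})$). Both routes are legitimate, but the Jacobian criterion avoids a fairly heavy explicit reduction over $k(m,q)$.

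The genuine gap is in part (b). You assert that $[\nu_i]\in W_2$, so that the residues land in the subobject $\widetilde{H}^0_{\dR}(\mathcal{E}\cap D)(-1)$, and that linear independence can be read off from a matrix of local residues ``supported'' on the complementary points. This is wrong on several counts. First, $\widetilde{H}^0(\mathcal{E}\cap D)(-1)$ is the \emph{sub}object in the short exact sequence \eqref{ses}, not the quotient $H^1_{\dR}(\mathcal{E},\mathcal{E}\cap D)/H^1_{\dR}(\mathcal{E})$. Second and more importantly, the classes $[\operatorname{res}(\nu_i)]$ do \emph{not} lie in the weight-$2$ subspace: when one expands them in the basis $\{[df_2],\dots,[df_6],[\operatorname{res}(\eta_G)],[\operatorname{res}(\omega_G)]\}$, the coefficients $a_{i,6}$ and $a_{i,7}$ (the coefficients of $[\operatorname{res}(\eta_G)]$ and $[\operatorname{res}(\omega_G)]$, recorded in Appendix \ref{Residues of integrands}) are nonzero, so $\operatorname{res}(\nu_i)$ projects nontrivially to $gr^W_3 = H^1_{\dR}(\mathcal{E})(-1)$. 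Third, the Griffiths reduction for a double-pole form subtracts an exact form $d\beta$, so there is no reason for the residue class of $\nu_i$ to ``vanish at'' the points $P_j$ lying on the hyperplanes where the numerator $\alpha_1^{k_1}\alpha_2^{k_2}\alpha_3^{k_3}$ vanishes, and indeed one can see from the appendix that it does not.

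Consequently, one cannot argue on the associated graded the way you proposed. The correct argument (which is what the computation you describe, when carried out honestly, actually delivers) is: compute all five residues in the full seven-element basis of $H^1_{\dR}(\mathcal{E},\mathcal{E}\cap D)$; the columns corresponding to $\omega_G$ and $\eta_G$ contribute entries $(0,\dots,0,0,1)^{\mathsf{T}}$ and $(0,\dots,0,1,0)^{\mathsf{T}}$, so the rank question for the $7\times 5$ matrix reduces, by elementary column operations — not by a weight argument — to checking that the $5\times 3$ block $(a_{i,j})_{1\le j\le 5,\,1\le i\le 3}$ has rank $3$ over $k(m,q)$. This is precisely the computation recorded in the appendix. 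Your proposal therefore lands on the right ultimate matrix but supplies an incorrect justification for why that matrix is the relevant one.
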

\begin{proof}
By \S\ref{section: subdivison of edges} we have that $\eta_G = \pi_G^*\left(\frac{-\alpha_1\Psi_G}{\Xi_G}\omega_G\right)$. Since the numerator $\alpha_1\Psi_G$ of $\eta_G$ is not the Jacobian ideal of $\Xi_G$ (i.e. the ideal generated by its partial derivatives with respect to $\alpha_i, 1\leq i \leq 3$) it follows that we cannot reduce the order of the pole along $\mathcal{E}$ by \cite[Proposition 4.6]{G}, so the class of its residue in $H^1_{\dR}(\mathcal{E})$ is not a $k(m,q)$-multiple of the class of the global form on $\mathcal{E}$.

We will consider the basis 
$$
\{[df_2],\ldots,[df_6],[res(\eta_G)],[res(\omega_G)]\} \text{ of } H^1_{\dR}(\mathcal{E},D \cap \mathcal{E}),
$$ 
and write $[res(\nu_i)]$, for $1 \leq i \leq 3$ as a linear combination
$$
[res(\nu_i)] = \sum_{j=1}^5 a_{i,j}[df_{j+1}] + a_{i,6}[res(\eta_G)] + a_{i,7}[res(\omega_G)].
$$
Note that by \S\ref{section: subdivison of edges} we have 
$$
\nu_1 = \pi_G^*\left(\frac{-\alpha_1\alpha_2^2}{\Xi_G}\omega_G\right), \quad \nu_2 = \pi_G^*\left(\frac{-\alpha_1^2\alpha_3}{\Xi_G}\omega_G\right), \quad \nu_3 = \pi_G^*\left(\frac{-\alpha_2\alpha_3^2}{\Xi_G}\omega_G\right) .
$$
We can determine the $a_{i,j}$'s by considering the affine covering we used in the proof of proposition \ref{subdivide morphism}. In the case of the sunrise it has six affine open sets, each of which contains two of the points $P_k$, $1 \leq k \leq 6$. Denote the affine open containing the points $P_k,P_l$ by $\mathbb{A}_{k,l}$. We can compose the residue with the pullback along $r_{k,l} : \mathbb{A}_{k,l} \cap \mathcal{E} \rightarrow \mathcal{E}$ to obtain the commutative diagram
$$\xymatrix{
            \tilde{H}_{\dR}^0(D \cap \mathcal{E}) \ar[d] \ar[r]& H_{\dR}^1(\mathcal{E},D \cap \mathcal{E}) \ar[d]^{r_{k,l}^*}\\
            \tilde{H}_{\dR}^0(\{P_k,P_l\}) \ar[r] & H_{\dR}^1(\mathbb{A}_{k,l} \cap \mathcal{E},\{P_k,P_l\})
            } 
$$
Computing explicitly on the bottom row we can write
$$
r_{k,l}^*(res(\nu_i)) = a_{i,6}r_{k,l}^*(res(\omega_G)) + a_{i,7}r_{k,l}^*(res(\eta_G)) + dG_{i;k,l}
$$
for a polynomial $G_{i;k,l}$. To obtain it we reduce the pole order of $r_{k,l}^*(res(\nu_i))$. This can always be done as $\mathcal{E}$ is smooth and so the partial derivatives of $\Xi_G$ and $\Xi_G$ itself don't have any common zeroes, so by weak Nullstellensatz the ideal generated by them will contain any numerator, in particular that of  $r_{k,l}^*(res(\nu_i))$ (see \cite[Proposition 4.6]{G}). We obtain equations $$a_{i,k} - a_{i,l} = G_{i;k,l}(P_k) - G_{i;k,l}(P_l).$$
We can repeat the process for $1\leq k \leq 5, l=k+1$ and solve the resulting system of equations to obtain all $a_{i,j}$'s. They can be computed algorithmically, and are given explicitly in appendix \ref{Residues of integrands}. This allows us to check that they are linearly independent.
\end{proof}
\end{subsubsection}
\end{subsection}

\begin{subsection}{The motivic Galois coaction}
\begin{subsubsection}{Arbitrary masses}
\label{Arbitrary masses coaction section}
We will write the coaction for the sunrise graph in terms of the de Rham basis in proposition \ref{basis-elliptic-dr}. Note that the expression for the coaction depends on the choice of basis, and we show an alternative expression in \S\ref{coaction in second basis}. In the following statement we drop the notation $(m,q)$ for the dependence on masses and momenta for brevity. We remind the reader of the definitions of de Rham periods $K_1^{\fdr},K_{2,\eta}^{\fdr}, F^{\fdr}_{P_i,P_j}$ in \S\ref{elliptic-dr-periods-defi}, where these are applied with respect to the elliptic curve $\mathcal{E}$ and the basis of $H^1_{\dR}(\mathcal{E})$ given by $\{[res(\omega_G)], [res(\eta_G)]\}$. We further define 
\begin{equation}
    \label{de-rham-elliptic-combi}
        F^{\fdr}_{\underline{b}} = \sum_{\substack{1 \leq k \leq 5}}b_{i}F^{\fdr}_{P_1,P_{i+1}}
\end{equation}
where $\underline{b}=(b_1,\ldots,b_5)$ is a vector with coefficients in $k_S$, i.e. rational functions of masses and momenta. Finally we define a de Rham Feynman period
$$
I^{\fdr}_{G,G \setminus e_3} = \left[ mot_G,\left[\phi_{\dR}(\nu_0)\right]^{\vee},[\pi_G^*(\omega_G)]\right]^{\fdr}
\footnote{Here we have defined a de Rham Feynman amplitude in relation to a chosen weight 0 basis element, in our case coming from the bubble graph obtained by deleting the edge $e_3$ from the sunrise. The reader could compare this to the case of one-loop graphs, when the graph motives are mixed Tate, and this can done without a choice of a weight 0 basis element since the weight filtration is split by the Hodge filtration in the de Rham realization -- see \cite[\S 2.5.2]{MatijaT}.}
$$
\begin{thm} 
\label{sunrise-cocation-generic-thm}
The motivic Galois coaction on the motivic Feynman amplitude associated to the sunrise Feynman graph with respect to the de Rham basis defined in Proposition \ref{basis-elliptic-dr} is
\begin{equation}
\begin{split}
    \Delta(I^{\fm}_G) &= I^{\fm}_G \otimes K_1^{\fdr}\mathbb{L}^{\fdr} + I^{\fm}_{G_{s(e_1)}} \otimes K_{2,\eta}^{\fdr}\mathbb{L}^{\fdr} + I^{\fm}_{G \setminus e_3} \otimes I^{\fdr}_{G,G \setminus e_3} +\\
    & +I^{\fm}_{G_{s(e_1,e_2^2)}} \otimes F^{\fdr}_{\underline{b_1}}\mathbb{L}^{\fdr}  + I^{\fm}_{G_{s(e_1^2,e_3)}} \otimes F^{\fdr}_{\underline{b_2}}\mathbb{L}^{\fdr} + I^{\fm}_{G_{s(e_2,e_3^2)}} \otimes F^{\fdr}_{\underline{b_3}}\mathbb{L}^{\fdr}  \, ,
\end{split}
\end{equation}
where $\underline{b_i} = (b_{i,1},\ldots,b_{i,5})$ for $1 \leq i \leq 3$, and $b_{i,j}$ are defined in \eqref{bij definition}.
\end{thm}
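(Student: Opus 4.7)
The plan is to apply the general coaction formula \eqref{general-coaction} to
$I^{\fm}_G = [mot_G, [\sigma_G], [\pi_G^*(\omega_G)]]^{\fm}$ using the de Rham basis
$\{[\phi_{\dR}(\nu_0)], [\pi_G^*(\omega_G)], [\eta_G], [\nu_1], [\nu_2], [\nu_3]\}$ from Proposition \ref{basis-elliptic-dr}, so that
$$
\Delta(I^{\fm}_G) = \sum_i \bigl[mot_G, [\sigma_G], [e_i]\bigr]^{\fm} \otimes \bigl[mot_G, [e_i]^{\vee}, [\pi_G^*(\omega_G)]\bigr]^{\fdr}.
$$
The argument then splits into identifying the motivic factors on the left of each tensor product, and the de Rham factors on the right.

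For the motivic factors, the class $[\pi_G^*(\omega_G)]$ tautologically gives back $I^{\fm}_G$. The forms $\eta_G, \nu_1, \nu_2, \nu_3$ were defined precisely so that $\eta_G\wedge dy$ and $\nu_i\wedge\bigwedge_j dy_j$ are pullbacks under the subdivision morphism $\tilde{\rho}$ of Feynman integrands of the subdivided graphs in figure \ref{fig:subdivided sunrise}. Hence by applying Corollary \ref{subdivided motivic period} iteratively (one subdivision at a time, absorbing the extra $[H^1(\mathbb{A}^1, \{0,1\}), [\gamma_{0,1}], [-dy]]^{\fm}$ factors via the period homomorphism) we identify the corresponding motivic periods with $I^{\fm}_{G_{s(e_1)}}$, $I^{\fm}_{G_{s(e_1,e_2^2)}}$, $I^{\fm}_{G_{s(e_1^2,e_3)}}$, $I^{\fm}_{G_{s(e_2,e_3^2)}}$ respectively. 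The class $[\phi_{\dR}(\nu_0)]$ comes from the bubble face map of \S\ref{weight-0-sunrise}, and the compatibility of Betti-de Rham matrix coefficients under morphisms of $\mathcal{H}(S)$ gives $[mot_G,[\sigma_G],[\phi_{\dR}(\nu_0)]]^{\fm} = I^{\fm}_{G\setminus e_3}$.

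For the de Rham factors, the key tool is the Gysin/residue sequence
$$
0 \to H^1(D\setminus \mathcal{E}\cap D) \to mot_G \xrightarrow{res} H^1(\mathcal{E}, \mathcal{E}\cap D)(-1) \to 0,
$$
which is split (on the de Rham side) by the Hodge filtration above weight $0$. The dual basis element $[\phi_{\dR}(\nu_0)]^{\vee}$ sits on the weight-$0$ piece and yields $I^{\fdr}_{G,G\setminus e_3}$ by definition. The remaining five duals pair through $res$ with classes in $H^1_{\dR}(\mathcal{E}, \mathcal{E}\cap D)(-1)$; the Tate twist $(-1)$ is precisely what produces the factor $\mathbb{L}^{\fdr}$ in each term. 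Since $res(\pi_G^*(\omega_G))$ is the holomorphic differential $[\omega]$ by \S\ref{sunrise Feynman integrand}, we have $[mot_G,[\pi_G^*(\omega_G)]^{\vee},[\pi_G^*(\omega_G)]]^{\fdr} = K_1^{\fdr}\mathbb{L}^{\fdr}$ and similarly $[\eta_G]^{\vee}$ gives $K_{2,\eta}^{\fdr}\mathbb{L}^{\fdr}$ (using that $res(\eta_G)$ is a form of the second kind, by the pole-order argument in Proposition \ref{basis-elliptic-dr}). For $[\nu_i]^{\vee}$, we substitute the decomposition
$$
[res(\nu_i)] = \sum_{j=1}^{5} a_{i,j}[df_{j+1}] + a_{i,6}[res(\eta_G)] + a_{i,7}[res(\omega_G)]
$$
obtained in the proof of Proposition \ref{basis-elliptic-dr}; dualising and recalling the definition of $F^{\fdr}_{P_1,P_{i+1}}$ identifies the contribution with $F^{\fdr}_{\underline{b_i}}\mathbb{L}^{\fdr}$ where the $b_{i,j}$'s are read off from the $a_{i,j}$'s.

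The main obstacle is the explicit determination of the coefficients $b_{i,j}$, i.e. writing each $[res(\nu_i)]$ in a basis of $H^1_{\dR}(\mathcal{E},\mathcal{E}\cap D)$ adapted to the pairing. This is the content of the pole-reduction argument sketched in Proposition \ref{basis-elliptic-dr}: one covers $\mathcal{E}$ by the six affine charts $\mathbb{A}_{k,l}$ containing pairs $\{P_k,P_l\}$, applies weak Nullstellensatz (using smoothness of $\mathcal{E}$) to reduce the pole order of each $r_{k,l}^*(res(\nu_i))$ modulo $k(m,q)\{res(\omega_G),res(\eta_G)\}$, collects the residues $G_{i;k,l}(P_k)-G_{i;k,l}(P_l) = a_{i,k}-a_{i,l}$, and solves the resulting linear system. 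This bookkeeping — carried out in appendix \ref{Residues of integrands} — is where all the genuine computation lives; once the $b_{i,j}$ are in hand, the theorem follows by assembling the tensor factors term by term.
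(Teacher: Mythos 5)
Your plan recovers the paper's argument in broad strokes: apply \eqref{general-coaction} to the basis of Proposition~\ref{basis-elliptic-dr}, match the motivic factors via the subdivision morphism and the face map, and match the de Rham factors by pushing dual basis elements through the residue morphism to $H^1(\mathcal{E},D\cap\mathcal{E})(-1)$, which accounts for the $\mathbb{L}^{\fdr}$ factors. This is essentially the structure of the paper's proof, so the approach is not genuinely different. However, several details you state are off, and one of them is where the real content of the proof lives.

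First, the displayed sequence
$0 \to H^1(D\setminus\mathcal{E}\cap D) \to mot_G \xrightarrow{res} H^1(\mathcal{E},\mathcal{E}\cap D)(-1) \to 0$
cannot be exact: the middle term has rank $6$ while the outer ranks sum to $1+7=8$. You have conflated the relative-cohomology sequence (whose quotient is $H^2(P^G\setminus\mathcal{E})$) with the residue map, whose image in $H^1(\mathcal{E},\mathcal{E}\cap D)(-1)$ has rank $5$. The paper never claims such an exact sequence; it only uses the residue as a morphism in $\mathcal{H}(S)$ to transport de Rham matrix coefficients, which needs neither surjectivity nor a Hodge-theoretic splitting. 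Your parenthetical claim that the sequence ``is split (on the de Rham side) by the Hodge filtration above weight $0$'' is both unnecessary and unsubstantiated for a non--mixed-Tate object like $mot_G$. Second, the passage from the coefficients $a_{i,j}$ of Appendix~\ref{Residues of integrands} to the coefficients $b_{i,j}$ in $F^{\fdr}_{\underline{b_i}}$ is not a direct reading-off: one must find $\xi_i^{\vee}$ with $res^{\vee}(\xi_i^{\vee}) = e_i^{\vee}$, i.e.\ $\xi_i^{\vee}(res(e_j)) = \delta_{ij}$, which amounts to inverting the matrix $(a_{j,k})$ as in \eqref{bij definition}. Calling this step ``dualising'' hides the inversion; and your remark that the computation ``carried out in appendix~\ref{Residues of integrands} [\ldots] once the $b_{i,j}$ are in hand'' misattributes the $b_{i,j}$'s to the appendix, which records only the $a_{i,j}$'s. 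Finally, the identification of $[df_{k+1}]^{\vee}$-coefficients with $F^{\fdr}_{P_1,P_{k+1}}$ is not ``by definition'': it requires the inclusion $\iota: (\mathcal{E},\{P_1,P_i\}) \hookrightarrow (\mathcal{E},\{P_1,\ldots,P_6\})$ and checking its dual sends $[d\tilde{f}_i]^{\vee}$ to $[df_i]^{\vee}$, as the paper does. None of these is a wrong idea, but each is load-bearing, and as written your sketch omits the pieces that make the coefficients come out correctly.
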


\begin{proof}
The general formula for the coaction \eqref{general-coaction} applied with respect to the basis of $(mot'_G)_{\dR}$ from proposition \ref{basis-elliptic-dr} gives
\begin{equation}
\begin{split}
\Delta(I^{\fm}_G) & = \left[mot_G,\left[\sigma_G\right],\left[\pi_G^*(\omega_G)\right] \right]^{\fm} \otimes \left[mot_G,\left[\pi_G^*(\omega_G)\right]^{\vee},\left[\pi_G^*(\omega_G)\right] \right]^{\fdr}  \\
& + \left[mot_G,\left[\sigma_G\right],\left[\eta_G\right] \right]^{\fm} \otimes \left[mot_G,\left[\eta_G\right]^{\vee},\left[\pi_G^*(\omega_G)\right] \right]^{\fdr}  \\
& + \sum_{i=1}^3 \left[mot_G,\left[\sigma_G\right],\left[\nu_i\right] \right]^{\fm} \otimes \left[mot_G,\left[\nu_i\right]^{\vee},\left[\pi_G^*(\omega_G)\right] \right]^{\fdr}  \\
+& \left[mot_G,\left[\sigma_G\right],\left[\phi_{\dR}(\nu_0)\right] \right]^{\fm} \otimes \left[mot_G,\left[\phi_{\dR}(\nu_0)\right]^{\vee},\left[\pi_G^*(\omega_G)\right] \right]^{\fdr}.
\end{split}
\end{equation}

By definition $\left[mot_G,\left[\sigma_G\right],\left[\pi_G^*(\omega_G)\right] \right]^{\fm} = I^{\fm}_G$ , and via the subdivision morphsim $\tilde{\rho}$ in \eqref{subdivided motivic period} we have
$
\left[mot_G,\left[\sigma_G\right],\left[\eta_G\right] \right]^{\fm} = I^{\fm}_{G_{s(e_1)}}.
$
Similarly, the remaining motivic periods are identified with motivic Feynman amplitudes of graphs obtained by subdividing edges, except for
$\left[mot_G,\left[\sigma_G\right],\left[\phi_{\dR}(\nu_0)\right] \right]^{\fm}$. The latter is equal to $I^{\fm}_{G \setminus e_3}$ via the face map $\phi$ (see \S\ref{weight-0-sunrise}).

It remains to identify the de Rham periods. As in \S\ref{completing the de rham basis} consider the basis $\{[df_2],\ldots,[df_6],[res(\eta_G)],[res(\omega_G)]\}$ of $H^1_{\dR}(\mathcal{E}, D \cap \mathcal{E})$. Let $\{e_i:1 \leq i \leq 6\}$ denote the basis $\{\left[\phi_{\dR}(\nu_0)\right],[\nu_1],[\nu_2],[\nu_3],[\eta_G],[\pi_G^*(\omega_G)]\}$ of $(mot_G)_{\dR}$ in that order, and by $\{e_i^{\vee}:1 \leq i \leq 6\}$ the associated dual basis. We can use the residue morphism
$$
mot_G=H^2(P^G \setminus \mathcal{E}, D \setminus \mathcal{E} \cap D) \xrightarrow{res} H^1(\mathcal{E}, D \cap \mathcal{E})(-1)
$$
to realise an equivalence of de Rham periods
$$
\left[mot_G, e_i^{\vee}, \left[\pi_G^*(\omega_G)\right]\right]^{\fdr} = \left[H^1(\E,D\cap \E)(-1), \xi_i^{\vee} ,[res(\omega_G)(-1)]\right]^{\fdr},
$$ for $2 \leq i \leq 6$, where $\xi_i^{\vee} \in H^1(\mathcal{E}, D \cap \mathcal{E})(-1)^{\vee}$ is a functional such that $res^{\vee}\left(\xi_i^{\vee}\right) = e_i^{\vee}$. To determine $\xi_i^{\vee}$ we write it as a linear combination in terms of the dual basis to our chosen basis of $H^1(\mathcal{E}, D \cap \mathcal{E})(-1)$
$$
\xi_i^{\vee} = \sum_{k=2}^6b_{i,k-1}\left[df_k(-1)\right]^{\vee} + b_{i,6}\left[res(\eta_G)(-1)\right]^{\vee} + b_{i,7}\left[res(\omega_G)(-1)\right]^{\vee}.
$$
Now note that we must have 
$$
res^{\vee}\left(\xi_i^{\vee}\right)(e_j) = \xi_i^{\vee}(res(e_j)) = \delta_{ij}.
$$
It follows that each coefficient $b_{i,k}$ is the $(k,i)$ entry of the matrix
\begin{equation}
\label{bij definition}
\begin{pmatrix}
a_{1,1} & \ldots & a_{1,7} \\
\vdots & \ddots & \\
a_{5,1} & \ldots & a_{5,7}
\end{pmatrix}^{-1},
\end{equation}
where the first three rows are the coefficients of $res(\nu_i)$, $1 \leq i \leq 3$, in Appendix \ref{Residues of integrands}, and the last two rows are $(0,\ldots0,1,0),(0,0,0,\ldots,0,1)$. Note that $b_{i,6}=b_{i,7}=0$ for $1 \leq i \leq 3$. Hence, for $1 \leq i \leq 3$, we have
\begin{equation}
\label{b prime lin comb de rham}
\begin{split}
\left[mot_G, e_i^{\vee}, \left[\pi_G^*(\omega_G)\right]\right]^{\fdr} &= \sum_{k=1}^5b_{i,k}\left[H^1(\E,D\cap \E), \left[df_{k+1}\right]^{\vee} ,[res(\omega_G)]\right]^{\fdr}\mathbb{L}^{\fdr},
\end{split}
\end{equation}
where $\mathbb{L}^{\fdr}$ accounts for the Tate twist. Furthermore we have
\begin{equation*}
\left[mot_G,\left[\eta_G\right]^{\vee},\left[\pi_G^*(\omega_G)\right] \right]^{\fdr} = \left[H^1(\E,D\cap \E), \left[res(\eta_G)\right]^{\vee} ,[res(\omega_G)]\right]^{\fdr} \mathbb{L}^{\fdr}.
\end{equation*}         
The latter is equal to $K_{2,\eta}^{\fdr}\mathbb{L}^{\fdr}$, by definition.
Similarly we have,
$$
         \left[mot_G,\left[\pi_G^*(\omega_G)\right]^{\vee},\left[\pi_G^*(\omega_G)\right] \right]^{\fdr} = \left[H^1(\E,D\cap \E), \left[res(\omega_G)\right]^{\vee} ,[res(\omega_G)]\right]^{\fdr}\mathbb{L}^{\fdr} .
$$
The latter is equal to $K_1^{\fdr}\mathbb{L}^{\fdr}$.

Now, let $\iota$ be the inclusion of pairs $(\E,\{P_1,P_i\}) \rightarrow (\E,\{P_1,\ldots,P_6\})$. It induces a morphism of motives
$$
H^1(\E,\{P_1,\ldots,P_6\}) \xrightarrow{\iota^*} H^1(\E,\{P_1,P_i\})
$$
and an equivalence of de Rham periods
$$
\left[H^1(\E,\{P_1,\ldots,P_6\}),\left[df_i\right]^{\vee},[\omega]\right]^{\fdr} = \left[H^1(\E,\{P_1,P_i\}),\left[d\tilde{f}_i\right]^{\vee},[\omega]\right]^{\fdr} = F^{\fdr}_{P_i,P_1},
$$
where $\tilde{f}_i$ is a meromorphic function on $\E$ which is $1$ at $P_i$ and $0$ at $P_1$, and $[\omega] \in H^1_{\dR}(\mathcal{E})$. The equivalence holds because 
$$
(\iota^*)^T\left(\left[d\tilde{f}_i\right]^{\vee}\right)\left(\left[df_k\right]\right) = \left[d\tilde{f}_i\right]^{\vee}(\iota^*(\left[df_k\right])) = \begin{cases}
1, & k = i \\
0, & \text{otherwise.}
\end{cases}
$$
The remaining de Rham period is $\left[mot_G,\left[\nu_i\right]^{\vee},\left[\pi_G^*(\omega_G)\right] \right]^{\fdr} = I^{\fdr}_{G,G \setminus e_3}$ by definition.

\end{proof}
\end{subsubsection}

\begin{subsubsection}{Changing the de Rham basis}
\label{coaction in second basis}
We once again consider the differential forms $\pi_G^*(\omega_G)$, $\eta_G$, and $\phi_{\dR}(\nu_0)$, as in the previous theorem, and replace the forms $\nu_1,\nu_2,\nu_3$ with
\begin{equation}
\begin{split}
    &\mu_1 = \frac{m_1^2x(\partial_y\Xi_G - \partial_z\Xi_G)}{\Xi_G^2}\Omega_G, \\
    &\mu_2 = \frac{m_2^2y(\partial_z\Xi_G - \partial_x\Xi_G)}{\Xi_G^2}\Omega_G, \\
    &\mu_3 = \frac{m_3^2z(\partial_x\Xi_G - \partial_y\Xi_G)}{\Xi_G^2}\Omega_G.
\end{split}
\end{equation}
First note that these are global forms which are Feynman integrands associated to $k_S$-linear combinations of Feynman graphs obtained by subdivision of edges from the sunrise graph \S\ref{section: subdivison of edges}. We can show that the motivic periods associated to $\mu_i$ and the domain of integration $\sigma_G$ are motivic logarithms:
\begin{equation}
    \begin{split}
    &\left[mot_G,[\sigma_G],[\mu_1]\right]^{\fm} = \log^{\fm} \left(\frac{m_3^2}{m_2^2} \right) \\
    &\left[mot_G,[\sigma_G],[\mu_2]\right]^{\fm} = \log^{\fm} \left(\frac{m_1^2}{m_3^2} \right) \\
    &\left[mot_G,[\sigma_G],[\mu_3]\right]^{\fm} = \log^{\fm} \left(\frac{m_2^2}{m_1^2} \right).
    \end{split}
\end{equation}
This follows, for example, from the fact that motivic logarithms are determined by their periods, so a computation of the periods is enough to show the equality above.

 We can repeat the process from the previous theorem and proposition \ref{basis-elliptic-dr} to compute the coefficients in the expression
$$
\left[mot_G,[\mu_i]^{\vee},[\pi_G^*(\omega_G)]\right]^{\fdr} = \sum_{k=1}^5b_{i,k}'\left[H^1(\E,D\cap \E), \left[df_{k+1}\right]^{\vee} ,[res(\omega_G)]\right]^{\fdr}\mathbb{L}^{\fdr}.
$$
In this case they are rational. Each $b_{i,k}'$ corresponds to the entry $(k,i)$ of the matrix
\begin{equation}
\begin{pmatrix}
\frac{1}{6} & \frac{7}{24} & -\frac{5}{24} 
\\[6pt]
 \frac{1}{6} & -\frac{11}{24} & \frac{1}{24}
\\[6pt]
 0 & \frac{1}{4} & \frac{1}{4}
\\[6pt]
 \frac{1}{6} & \frac{1}{24} & -\frac{11}{24} 
\\[6pt]
 \frac{1}{6} & -\frac{5}{24} & \frac{7}{24}
\end{pmatrix}.
\end{equation}
Hence we get the following expression for the coaction on the sunrise
\begin{equation}
\label{sunrise coaction w logs}
\begin{split}
    \Delta(I^{\fm}_G) &= I^{\fm}_G \otimes K_1^{\fdr}\mathbb{L}^{\fdr} + I^{\fm}_{G_{s(e_1)}} \otimes K_{2,\eta}^{\fdr}\mathbb{L}^{\fdr} + I^{\fm}_{G \setminus e_3} \otimes I^{\fdr}_{G,G \setminus e_3} +\\ &+\log^{\fm} \left(\frac{m_3^2}{m_2^2} \right) \otimes F^{\fdr}_{\underline{b'_1}}\mathbb{L}^{\fdr}
    +\log^{\fm} \left(\frac{m_1^2}{m_3^2} \right)\otimes F^{\fdr}_{\underline{b'_2}}\mathbb{L}^{\fdr}
    +\log^{\fm} \left(\frac{m_2^2}{m_1^2} \right) \otimes F^{\fdr}_{\underline{b'_3}}\mathbb{L}^{\fdr}  \, ,
\end{split}
\end{equation}
where, as before, we set $\underline{b'_i}=(b_{i,1}',\ldots,b_{i,5}')$ and $F^{\fdr}_{\underline{b}}$ is defined in \ref{de-rham-elliptic-combi}.

\end{subsubsection}

\begin{subsubsection}{Equal masses}
\label{equal masses}
The coaction simplifies in the case when the masses are equal. We can observe from \eqref{sunrise coaction w logs} that motivic logarithms vanish in this case since they are determined by their periods. In terms of mixed Hodge structures this is reflected by the splitting of the extension of $\QQ$-mixed Hodge structures
$$
0 \rightarrow H^2(P)/[\mathcal{E}] \rightarrow H^2(P \setminus \mathcal{E}) \rightarrow H^1(\mathcal{E})(-1) \rightarrow 0 .
$$
This is shown in \cite[Lemma 6.16]{BV}, and is a consequence of the fact that the points of intersection $\mathcal{E} \cap D$ are torsion points of the elliptic curve when the masses are equal. The previous extension fits into the following diagram
\begin{equation}
\label{diagram-equal-masses}
\xymatrix{
            & & & 0 \ar[d] & \\
            & & & H^2(P)/[\mathcal{E}] \ar[d] & \\
            0 \ar[r] & H^1(D \setminus \mathcal{E} \cap D) \ar[r] & H^2(P^G \setminus \mathcal{E}, D \setminus \mathcal{E} \cap D) \ar[r] & H^2(P^G \setminus \mathcal{E}) \ar[r] \ar[d]^{res} & 0 \\
            & & & H^1(\mathcal{E})(-1) \ar[d] & \\
            & & & 0 & \\
            }
\end{equation}
where we write $H^2(P)/[\mathcal{E}]$ for $H^2(P)/\Imm(Gysin)$ and $Gysin$ is the Gysin morphism $H^0(\mathcal{E})(-1) \rightarrow H^2(P)$. Since the right-hand column splits we have a morphism $i : H^1(\mathcal{E})(-1) \rightarrow H^2(P^G \setminus \mathcal{E})$. We can pull back the short exact sequence in the middle row above via $i$ to a short exact sequence
\begin{equation}
0 \rightarrow \QQ(0) \rightarrow E \rightarrow H^1(\mathcal{E})(-1) \rightarrow 0,
\end{equation}
where we also use the isomorphism $H^1(D \setminus \mathcal{E} \cap D) \cong \QQ(0)$. Therefore we get an object $E$ of $\mathcal{H}(S')$ or rank 3, where $S'$ is the subspace of $S$ where the masses are equal. A basis for its de Rham realization is $$\{[\phi_{\dR}(\nu_0)],[res(\eta_G)],[res(\omega_G)]\},$$ and the motivic Feynman amplitude associated to the sunrise $I_G^{\fm}$ is equivalent to a motivic period of $E$. Hence we know that the coaction formula has 3 terms as it corresponds to the rank of $E$. The coaction formula can be written as
\begin{equation}
\label{sunrise coaction equal mass}
\begin{split}
    \Delta(I^{\fm}_G) &= I^{\fm}_G \otimes K_1^{\fdr}\mathbb{L}^{\fdr} + I^{\fm}_{G_{s(e_1)}} \otimes K_{2,\eta}^{\fdr}\mathbb{L}^{\fdr} + I^{\fm}_{G \setminus e_3} \otimes I^{\fdr}_{G,G \setminus e_3}.
\end{split}
\end{equation}
\end{subsubsection}
\end{subsection}
\end{section}

\part{The relative completion of modular groups and the sunrise}

\section{Motivic periods of the relative completion of the torsor of paths on a modular curve}
The theory of the relative completion of fundamental groups and path torsors, and their periods in the case of modular curves, is due to Hain \cite{Hain1,Hain2} and Brown \cite{Brown5}. We describe here the motivic lifts of those periods.

\begin{subsection}{Tannakian definition of the relative completion}
\label{tannakian definition}
Let $\mathcal{T}$ be a Tannakian category with two fiber functors $\omega_x,\omega_y$, and let $\mathcal{S} \hookrightarrow \mathcal{T}$ be a full semisimple Tannakian subcategory with fiber functors obtained by restriction of $\omega_x,\omega_y$. With respect to these two categories define a third category $\mathcal{W}(\mathcal{T},\mathcal{S}) \hookrightarrow \mathcal{T}$ such that its objects are objects $V \in \mathcal{T}$ equipped with a filtration
$$
0 = V_0 \subset V_1 \subset \ldots \subset V_n = V
$$
such that its graded pieces $V_i/V_{i-1}$ are objects of $\mathcal{S}$. The category $\mathcal{W}(\mathcal{T},\mathcal{S})$ is Tannakian with fiber functors $\omega_x,\omega_y$ restricted from $\mathcal{T}$ to the subcategory. Define 
$$
 \pi_1(\mathcal{T},\mathcal{S},\omega_x) = \textrm{Aut}^{\otimes}_{\mathcal{W}(\mathcal{T},\mathcal{S})}(\omega_x), \text { and }  \pi_1(\mathcal{T},\mathcal{S},\omega_x,\omega_y) = \textrm{Isom}^{\otimes}_{\mathcal{W}(\mathcal{T},\mathcal{S})}(\omega_x,\omega_y).
$$
Let $S_x = \textrm{Aut}^{\otimes}_{\mathcal{S}}(\omega_x)$, noting the different category to the one in the definition of $\pi_1(\mathcal{T},\mathcal{S},\omega_x)$. Then $S_x$ is a pro-reductive affine group scheme because the category $\mathcal{S}$ is semi-simple \cite[Proposition 2.23]{DeligneMilne}, and we get a morphism of affine group schemes $\pi_1(\mathcal{T},\mathcal{S},\omega_x) \rightarrow S_x$ induced by the inclusion of categories $\mathcal{S} \hookrightarrow \mathcal{W}(\mathcal{T},\mathcal{S})$. We get an exact sequence
$$
0 \rightarrow \mathcal{U}_x \rightarrow \pi_1(\mathcal{T},\mathcal{S},\omega_x) \rightarrow S_x \rightarrow 0 \,,
$$
where $\mathcal{U}_x$ is a pro-unipotent group. We also write $S_{x,y} = \textrm{Isom}^{\otimes}_{\mathcal{S}}(\omega_x,\omega_y)$, and similarly we have a morphism $\pi_1(\mathcal{T},\mathcal{S},\omega_x,\omega_y) \rightarrow S_{x,y}$.
\end{subsection}
\begin{subsection}{Betti and de Rham relative completion of $\pi_1$}
\label{B-dR-completion}
Let $X$ be a smooth geometrically connected scheme over a field $k \subset \mathbb{C}$, and let $x,y \in X(k)$ be two rational points. 
\begin{itemize}
    \item \textbf{Betti:} Let $\mathcal{L}_X$ be the category of local systems of finite-dimensional $k$-vector spaces on $X$, and $\omega_x^B$ the fiber functor sending a local system to its fiber over the point $x \in X(k)$ for some field $k$. Let $\mathcal{S}^B$ be a full semi-simple Tannakian subcategory of $\mathcal{L}_X$. Then define the \textit{relative Betti fundamental group of X} by 
    $$
    \pi_1^{B,\mathcal{S}}(X,x) = \pi_1(\mathcal{L}_X,\mathcal{S}^B,\omega_x^B).
    $$

    The relative completion can be defined for any group with respect to a morphism with Zariski dense image into the $k$-points of a reductive algebraic group by a universal property (see  \cite[\S 1]{Hain1}). If we consider the topological fundamental group $\pi_1(X,x)$, and $S^B_x = \textrm{Aut}^{\otimes}_{\mathcal{S}^B}(\omega_x^B)$, and we note that each local system on $X$ is equivalent to a $\pi_1(X,x)$-representation, we get a Zariski dense morphism 
    \begin{equation}
    \label{Zariski dense}
    \pi_1(X,x) \rightarrow S^B_x(k).
    \end{equation}
    Then $\pi_1^{B,\mathcal{S}}(X,x)$ is the relative completion of the group $\pi_1(X,x)$ with respect to the morphism \eqref{Zariski dense}. We also define the \textit{relative Betti fundamental groupoid}:
    $$
    \pi_1^{B,\mathcal{S}}(X,x,y) = \pi_1(\mathcal{L}_X,\mathcal{S}^B,\omega_x^B,\omega_y^B).
    $$
    It is an affine scheme over $k$, which also comes equipped with a morphism
    \begin{equation}
    \label{Betti-path}
    \begin{split}
    \pi_1(X,x,y) &\rightarrow \pi_1^{B,\mathcal{S}}(X,x,y)(k)\\
    \gamma & \mapsto \gamma^{\Beta},
    \end{split}
    \end{equation}
    which has Zariski dense image, and arises via an isomorphism of fiber functors given by the pullback along a smooth path $\gamma : [0,1] \rightarrow X(\CC)$.
    
    \item \textbf{de Rham:} Let $\mathcal{A}_X$ be the category of vector bundles on $X$, equipped with an integrable connection with regular singularities at infinity, and $\omega_x^{dR}$ the fiber functor sending a vector bundle to its fiber over the point $x \in X(k)$. Let $\mathcal{S}^{dR}$ be a full semi-simple Tannakian subcategory of $\mathcal{A}_X$. Then define the \textit{relative de Rham fundamental group}
    $$
    \pi_1^{dR,\mathcal{S}}(X,x) = \pi_1(\mathcal{A}_X,\mathcal{S}^{dR},\omega_x^{dR}).
    $$
    Similarly, define the \textit{relative de Rham fundamental groupoid}
    $$
    \pi_1^{dR,\mathcal{S}}(X,x,y) = \pi_1(\mathcal{A}_X,\mathcal{S}^{dR},\omega_x^{dR},\omega_y^{dR}).
    $$
    \item \textbf{Comparison:} The Riemann-Hilbert correspondence gives an equivalence of categories 
    $$
    \mathcal{L}_X \otimes \CC \sim \mathcal{A}_X \otimes \CC
    $$
    and thus induces a comparison isomoprhism
    \begin{equation}
    \label{grp-comparison}
    c : \pi_1^{B,\mathcal{S}}(X,x,y) \otimes \CC \xrightarrow{\sim} \pi_1^{dR,\mathcal{S}}(X,x,y) \otimes \CC,
    \end{equation}
    which in turn induces an isomoprhism of their affine rings. 
\end{itemize}
We are particularly interested in the previous definitions in the case when $X$ is a modular curve $X_{\Gamma}$ for a congruence subgroup $\Gamma$ of $\textrm{SL}_2(\ZZ)$, the category $\mathcal{S}^B$ is the category of local systems generated by $R^1f_*\QQ$ for $f:\mathcal{E} \rightarrow X_{\Gamma}$ the universal family of elliptic curves, and similarly $\mathcal{S}^{dR}$ is the category of vector bundles generated by the relative algebraic de Rham cohomology $H^1_{\dR}(\mathcal{E}/X_{\Gamma})$ with the Gauss-Manin connection. We will set one of the two base points to be a \textit{tangential base point} \cite[\S15.3 - 15.12]{DelGFD}, see also \cite[\S 4]{Brown5} at the cusp at infinity, denoted $\vec{1}_{i\infty}$. Setting the other base point to $\tau \in X_{\Gamma}(k)$ we get an object 
$$(\mathcal{O}(\pi_1^{B,\mathcal{S}}(X_{\Gamma},\vec{1}_{i\infty},\tau)), \mathcal{O}(\pi_1^{dR,\mathcal{S}}(X_{\Gamma},\vec{1}_{i\infty},\tau)) \otimes_{k} \mathcal{O}_{X_{\Gamma}}, c)
$$
in the category $\mathcal{H}(k) = \mathcal{H}(\Sp(k))$, denoted $\mathcal{O}(\pi_1^{\textrm{rel}}(X_{\Gamma},\vec{1}_{i\infty},\tau))$.
\begin{rmk}
Note that $\pi_1^{\textrm{rel}}(X_{\Gamma},x,y)$ is a right torsor over $\pi_1^{\textrm{rel}}(X_{\Gamma},x)$:
\begin{equation}
    \label{torsor-structure-r}
\pi_1^{\textrm{rel}}(X,x,y) \times \pi_1^{\textrm{rel}}(X,x) \rightarrow \pi_1^{\textrm{rel}}(X,x,y),
\end{equation}
and a left torsor over $\pi_1^{\textrm{rel}}(X,y)$:
\begin{equation}
    \label{torsor-structure-l}
\pi_1^{\textrm{rel}}(X,y) \times \pi_1^{\textrm{rel}}(X,x,y)  \rightarrow \pi_1^{\textrm{rel}}(X,x,y).
\end{equation}
\end{rmk}
\begin{rmk}
\label{families-rel-compl}
We may replace the point $\tau$ and its associated fiber functors $\omega_{\Beta}^{\tau},\omega_{\dR}^{\tau}$ with $\omega_{\Beta}^X,\omega_{\dR}^Y$,  for any $X \subset X_{\Gamma}(\CC)$ simply connected and $Y\subset X_{\Gamma}(\CC)$ such that $Y \subset U(\CC)$ for $U\subset X_{\Gamma}$ affine, sending an element $(\mathbb{V}_{\Beta},\mathcal{V}_{\dR},c)$ in $ \mathcal{H}(X_{\Gamma})$ to the sections of $\mathbb{V}_{\Beta}$ over $X$ and $\mathcal{V}_{\dR}$ over $Y$ respectively -- see \cite[\S 7.2.1]{Brown2} for details. We obtain a local system and vector bundle 
$$
\mathcal{O}(\pi_1^{B,\mathcal{S}}(X_{\Gamma},\vec{1}_{i\infty},\bullet)), \quad \mathcal{O}(\pi_1^{dR,\mathcal{S}}(X_{\Gamma},\vec{1}_{i\infty},\bullet)) \otimes_{k} \mathcal{O}_{X_{\Gamma}}.
$$
 The Betti relative completion $\mathcal{O}(\pi_1^{B,\mathcal{S}}(X_{\Gamma},\vec{1}_{i\infty},\bullet))$ is an (admissible) variation of mixed Hodge structures on $X_{\Gamma}$ \cite[Theorem 7.18]{Hain2}. Hence we get an object in the category $\mathcal{H}(X_{\Gamma})$:
$$
 (\mathcal{O}(\pi_1^{B,\mathcal{S}}(X_{\Gamma},\vec{1}_{i\infty},\bullet)), \mathcal{O}(\pi_1^{dR,\mathcal{S}}(X_{\Gamma},\vec{1}_{i\infty},\bullet)) \otimes_{k} \mathcal{O}_{X_{\Gamma}}, c),
$$
denoted $\mathcal{O}(\pi_1^{\textrm{rel}}(X_{\Gamma},\vec{1}_{i\infty},\bullet))$.
We will, however, work over a fiber at $\tau$ in what follows for simplicity, and note that all results lift to families over $X_{\Gamma}$ by replacing the fiber functors at $\tau$ with $\omega^Y_{\bullet}$, where $\bullet = \Beta,\dR$, everywhere.
\end{rmk}
\end{subsection}
\begin{subsection}{Motivic periods}
\label{motivic periods of rel compl}
Our next goal is to undestand the affine rings of the relative de Rham and Betti fundamental groupoids of $X_{\Gamma}$ and relate the associated periods, i.e. matrix coefficients of their comparison, to iterated integrals of modular forms.
\subsubsection{Splittings}
Let $\tau \in X_{\Gamma}(k)$, and consider the Tannaka group of the category $\mathcal{H}(k)$ with respect to the fiber functor $\omega^{\tau}_{\dR}$, which we denote by $G^{\dR}_{\mathcal{H}(k)}$. Its action on $\QQ(-1)$ defines a character $\chi : G^{\dR}_{\mathcal{H}(k)} \rightarrow \mathbb{G}_m$ sending $g \in G^{\dR}_{\mathcal{H}(k)}(R)$ to $\lambda_g \in R^{\times}$, for any commutative ring $R$, where
$$
g\mathbb{L}^{\dR} = (1\otimes g) \Delta \mathbb{L}^{\dR} = \lambda_g\mathbb{L}^{\dR}.
$$
We get an exact sequence
$$
1 \rightarrow G^{\dR,ker(\chi)}_{\mathcal{H}(k)} \rightarrow G^{\dR}_{\mathcal{H}(k)} \xrightarrow{\chi} \mathbb{G}_m \rightarrow 1,
$$
a splitting of which gives a splitting of the $W$-filtration for all objects in $\mathcal{H}(k)$. Since $\mathcal{O}(\pi_1^{\textrm{rel}}(X_{\Gamma},\vec{1}_{i\infty},\tau))$ is an object of the category $\mathcal{H}(k)$ we get a splitting of the weight filtration on its de Rham realization. In turn we get a "weight 0 projection" morphism in the de Rham realization:
$$
gr^{W,\dR}_0 : \mathcal{O}(\pi_1^{\dR,\mathcal{S}}(X_{\Gamma},\vec{1}_{i\infty},\tau)) \rightarrow k.
$$
Thus, choosing such a splitting is equivalent to choosing a "de Rham path" which we denote $1^{\dR}_{\tau} \in \pi_1^{\textrm{dR},\mathcal{S}}(X_{\Gamma},\vec{1}_{i\infty},\tau)(k)$, and which in turn induces an isomorphism between the de Rham realizations of the relative torsor of paths and the relative fundamental group via the torsor structure of the former over the latter \eqref{torsor-structure-r}:
\begin{equation}
\begin{split}
\pi_1^{\textrm{dR},\mathcal{S}}(X_{\Gamma},\vec{1}_{i\infty}) & \xrightarrow{\sim} \pi_1^{\textrm{dR},\mathcal{S}}(X_{\Gamma},\vec{1}_{i\infty},\tau) \\
g &\mapsto 1^{\dR}_{\tau} \cdot g.
\end{split}
\end{equation}
Similarly we have
\begin{equation}
\label{torsor-group-iso}
\begin{split}
\pi_1^{\textrm{dR},\mathcal{S}}(X_{\Gamma},\tau) & \xrightarrow{\sim} \pi_1^{\textrm{dR},\mathcal{S}}(X_{\Gamma},\vec{1}_{i\infty},\tau) \\
g &\mapsto  g \cdot (1^{\dR}_{\tau})^{-1}.
\end{split}
\end{equation}
We also need to choose a splitting of the extension
\begin{equation}
\label{splitting-relative-group}
0 \rightarrow U^{\dR}_{\tau} \rightarrow \pi_1^{\dR,\mathcal{S}}(X_{\Gamma},\tau) \rightarrow S_{\tau}^{\dR} \rightarrow 0.
\end{equation}
Choosing such splittings is possible by the argument in \cite[\S13.9]{Brown5} and \cite[Proposition 3.1]{Hain2}. Putting this together we get an isomorphism of affine rings
$$
\mathcal{O}(\pi_1^{\textrm{dR},\mathcal{S}}(X_{\Gamma},\vec{1}_{i\infty},\tau)) \cong \mathcal{O}(U_{\tau}^{\dR}) \otimes \mathcal{O}(S_{\tau}^{\dR}).
$$
\subsubsection{The unipotent and reductive paths} Let $\gamma_{\tau} \in \pi_1(X_{\gamma},\vec{1}_{i\infty},\tau)$ be the class of a path between $\vec{1}_{i\infty}$  and $\tau$, and $\gamma^{\Beta}_{\tau}$ be its image under \eqref{Betti-path}. This gives a homomorphism
$$\gamma^{\Beta}_{\tau} : \mathcal{O}(\pi_1^{\Beta,\mathcal{S}}(X_{\Gamma},\vec{1}_{i\infty},\tau)) \rightarrow k,$$ 
and we can extend scalars to $\CC$. Pre-composing $\gamma^{\Beta}_{\tau}$ with the morphism of affine rings associated to the comparison isomorphism \eqref{grp-comparison} we obtain an element 
$$
\gamma^{\Beta,\dR}_{\tau} \in \pi_1^{\dR,\mathcal{S}}(X_{\Gamma},\vec{1}_{i\infty},\tau)(\CC) = \Hom(\mathcal{O}(\pi_1^{\dR,\mathcal{S}}(X_{\Gamma},\vec{1}_{i\infty},\tau)),\CC).
$$
Now consider the composition of the (inverse of) the isomorphism \eqref{torsor-group-iso} between the relative de Rham torsor of paths and the relative de Rham group with the base point at $\tau$ and the morphism $\pi_1^{\dR,\mathcal{S}}(X_{\Gamma},\tau) \rightarrow U^{\dR}_{\tau}$ which splits \eqref{splitting-relative-group}. The homomorphism of affine rings associated to this composition is a homomorphism $\mathcal{O}(U^{\dR}_{\tau}) \rightarrow \mathcal{O}(\pi_1^{\dR,\mathcal{S}}(X_{\Gamma},\vec{1}_{i\infty},\tau))$. We can pre-compose $\gamma^{\Beta,\dR}_{\tau}$ by it and we denote the resulting homomorphism by
\begin{equation}
    \label{unipotent-path}
    \gamma_{\tau}^{\Beta,u} : \mathcal{O}(U^{\dR}_{\tau}) \rightarrow \CC
\end{equation}
We may think of $\gamma_{\tau}^{\Beta,u}$ as a formal power series the coefficients of which are periods. Choosing an element $\omega \in \mathcal{O}(U^{\dR}_{\tau})$ amounts to selecting a coefficient of this series. In this way $\gamma_{\tau}^{\Beta,u}$ can be regarded as the modular version of the Drinfeld associator (see \cite{Brown6}). 

Moreover, we shall need to consider the homomorphism obtained by pre-composing $\gamma^{\Beta,\dR}_{\tau}$ by the homomorphism of affine rings associated to $\pi_1^{\dR,\mathcal{S}}(X_{\Gamma},\vec{1}_{i\infty}, \tau) \rightarrow S_{\vec{1}_{i\infty},\tau}^{\dR}$ defined in \S\ref{tannakian definition}. Denote this composition by 
\begin{equation}
\label{reductive-path}
\gamma^{red}_{\vec{1}_{i\infty},\tau} : \mathcal{O}(S^{\dR}_{\vec{1}_{i\infty},\tau}) \rightarrow \CC.
\end{equation}

\subsubsection{$U^{\dR}_{\tau}$ and modular forms.} Next, we describe the affine ring of the unipotent part of the relative de Rham fundamental group $\mathcal{O}(U^{\dR}_{\tau})$. We consider the Tannakian category $\mathcal{W}(\mathcal{L}_X, \mathcal{S}^B)$ of local systems on $X$ equipped with a filtration with prescribed graded pieces. It has a functor to the category of local systems on $X$ which forgets the filtration. This induces a morphism of Ext groups
$$
\Ext^r_{\mathcal{W}(\mathcal{L}_{X_{\Gamma}}, \mathcal{S}^B)}(\QQ,\mathbb{V}) \rightarrow \Ext^r_{\mathcal{L}_{X_{\Gamma}}}(\QQ,\mathbb{V}),
$$
where $\mathbb{V} \in \mathcal{W}(\mathcal{L}_{X_{\Gamma}}, \mathcal{S}^B)$. By definition $\Ext^r_{\mathcal{L}_{X_{\Gamma}}}(\QQ,\mathbb{V}) = H^r(\Gamma;V)$ where $V$ is the $\Gamma$-representation associated to $\mathbb{V}$, and on the left hand side we have 
$$
\Ext^r_{\mathcal{W}(\mathcal{L}_{X_{\Gamma}}, \mathcal{S}^B)}(\QQ,\mathbb{V}) = H^r(\pi_1^{\Beta,\mathcal{S}}(X_{\Gamma},\tau); V).
$$ 
This map is an isomorphism for $r=1$, and both sides are trivial for $r \geq 2$, given that $\Gamma$ is a modular group \cite[\S 3.2, \S3.4.2]{Hain2}. It follows that $$\textrm{gr}^C\mathcal{O}(U^{\dR}_{\tau}) \cong  T^c(H^1(U^{\dR}_{\tau})),$$ 
where $T^c$ stands for the tensor coalgebra, and $\textrm{gr}^C$ stands for the graded for the `coradical filtration', otherwise known as the `filtration by unipotency degree' -- see \cite[\S2.5]{Brown2}. Moreover, from \cite[\S 6]{Brown1} we have that
\begin{equation}
\label{H1Udr}
H^1(U^{\dR}_{\tau}) = \bigoplus_{n\geq 0}\Ext^1_{\mathcal{W}(\mathcal{L}_{X_{\Gamma}}, \mathcal{S}^B)}(\mathbb{Q},\mathbb{V}_n^{\vee}) \otimes \mathcal{V}_{n,\tau},
\end{equation}
where $\mathbb{V}_n$ is the $n$th symmetric power of the local system $R^1f_*\QQ$, where $f: \mathcal{E} \rightarrow X_{\Gamma}$ is the universal family of elliptic curves, and $\mathcal{V}_{n,\tau}$ is the fiber at $\tau$ of the nth symmetric power of the relative algebraic de Rham cohomology of $\mathcal{E}$. Note that $\mathbb{V}_n$ is self-dual induced by $H^1(\mathcal{E}_{\tau}; \QQ)^{\vee} \cong H^1(\mathcal{E}_{\tau}; \QQ)(1)$. Since $\mathcal{W}(\mathcal{L}_{X_{\Gamma}}, \mathcal{S}^B)$ is a subcategory of local systems on $S(\CC)$ we have
$$
\Ext^1_{\mathcal{W}(\mathcal{L}_{X_{\Gamma}}, \mathcal{S}^B)}(\mathbb{Q},\mathbb{V}_n^{\vee}) \cong H^1(\Gamma, V_n^{\vee}) \cong H^1(X_{\Gamma}, \mathbb{V}_n^{\vee}),
$$ where in the middle we have group cohomology where $V_n^{\vee}$ denotes the fiber of $\mathbb{V}_n^{\vee}$ at $\partial/\partial q$, and on the right hand side we have cohomology with coefficients in the local system $\mathbb{V}_n^{\vee}$.

\subsubsection{Eichler-Shimura.} Grothendieck's algebraic de Rham theorem gives an isomorphism between $H^1(X_{\Gamma}, \mathbb{V}_n)$ and $M^!_{n+2}/\mathcal{D}^{n+1}M_{-n}^! \otimes \CC$ where  $M^!$ is the space of weakly holomorphic modular forms, and $\mathcal{D} = qdq/q$ (see \cite[Corollary 1.4]{BrownHain} for the level 1 case). When restricted to the space of holomorphic modular forms of weight $n+2$ we recover the Eichler-Shimura isomorphism. The Hodge structure  of $H^1(X_{\Gamma},\mathbb{V}_n)$ is known \cite{Hain2}. We have
\begin{equation}
\label{Hodge-structure-mod-forms}
H^1(X_{\Gamma},\mathbb{V}_n) \cong \bigoplus_f M_f \oplus \bigoplus_{e} \QQ(-n-1),
\end{equation}
where $f$ ranges over all cusp form of weight $n+2$ and $e$ ranges over all Eisenstein series of weight $n+2$ for the congruence group $\Gamma$. Here $M_f$ is the Hodge realization of the motive of the cusp form $f$ \cite{Scholl}. Note that the pullback of $\mathbb{V}_n$ to the upper half-plane $\mathfrak{H}$ via $\rho : \mathfrak{H} \rightarrow X_{\Gamma}$ is the trivial local system whose fiber over $z \in \mathfrak{H}$ is $H^1(E_z)$, where $E_z = \CC/(\ZZ \oplus z\ZZ)$. Denote by $X, Y$ the sections of $\mathbb{V}_1^{\vee}$ which over a point $z$ correspond to the standard basis of $H_1(E_z)$ given by classes of paths from $0$ to $1$ and $0$ to $z$, respectively. Then we have that
\begin{alignat*}{2}
&M_{n+2}(\Gamma) &&\cong F^{n+1}H^1(X_{\Gamma},\mathbb{V}_n) \\
&\quad \quad f(z) && \mapsto (2\pi i)^{n+1} f(z)( zX - Y)^n dz\, ,
\end{alignat*}
where $M_{n+2}(\Gamma)$ is the space of holomorphic modular forms of weight $n+2$ for the congruence group $\Gamma$ -- see \cite[Theorem 11.4]{Hain2}, \cite[Appendix A]{BrownHain}.

\subsubsection{Motivic periods of the unipotent part}
We write $\eta \in \mathcal{O}(U_{\tau}^{\dR})$ using the bar notation $\eta = [f_1| \cdots |f_k] \otimes [\xi_1|
\cdots|\xi_k]$, where $f_i$ is such that $(2\pi i)^{n+1}f_i(z)( zX - Y)^ndz$ corresponds to a class in $H^1(X_{\Gamma},\mathbb{V}_n)$, and $\xi_i \in \mathcal{V}_{n,\tau}$. We associate to it a motivic period of $\mathcal{O}(\pi_1^{\textrm{rel}}(X_{\Gamma},\vec{1}_{i\infty},\tau))$:
\begin{equation}
\label{unipotent-motivic-period}
 \gamma_{\tau,u}^{\fm}(\eta) = \left[\mathcal{O}(\pi_1^{\textrm{rel}}(X_{\Gamma},\vec{1}_{i\infty},\tau)), \gamma^{\Beta}_{\tau},\eta \right]^{\fm},
\end{equation}
where $\gamma^{\Beta}_{\tau}$ is viewed as an element of $\mathcal{O}(\pi_1^{\textrm{rel},\Beta}(X_{\Gamma},\vec{1}_{i\infty},\tau))^{\vee}$. Its image under the period isomorphism is given by the iterated Eichler integral of $f_1,\ldots,f_k$ along $\gamma$ from $i\infty$ to $\tau$, followed by a pairing of sections $X,Y$ and the appropriate element of $\mathcal{V}_{n,\tau}$ which is given by integration. 
\begin{ex}
Let $\eta = [f(z)]\otimes[\omega_{\tau}]$, where $f(z)$ is modular form of weight 3 over $X_1(6)$ and $[\omega_{\tau}] \in F^1H^1_{\dR}(\mathcal{E}_{\tau})$. Then we have
\begin{equation}
\begin{split}
\textrm{per}(\gamma^{\fm}_{\tau,u}(\eta)) &= (2\pi i)^2 \int_{\tau}^{i\infty} f(z)( zX - Y) dz \otimes [\omega_{\tau}] \\
& = (2\pi i)^2 \int_{\tau}^{i\infty} f(z)(z - \tau) dz,
\end{split}
\end{equation}
 Note that in the case when $f(z)$ is an Eisenstein series we have to regularize at $i\infty$ (see \cite[\S 4]{Brown5}).
\end{ex}

\subsubsection{The reductive part} Recall that $S^{\dR}_{i\infty,\tau} = \textrm{Isom}^{\otimes}_{\mathcal{S}^{\dR}}(\omega^{\dR}_{i\infty},\omega^{\dR}_{\tau})$. Since $\mathcal{S}^{\dR}$ is semi-simple its affine ring is 
$$
\mathcal{O}(S^{\dR}_{i\infty,\tau}) \cong \bigoplus_{n \geq 0} \mathcal{V}_{n,\tau} \otimes \mathcal{V}_{n,i\infty}^{\vee}.
$$
Note that $\mathcal{V}_{n,i\infty}$ is the $n$th symmetric power of the algebraic de Rham cohomology of the infinitesimal Tate elliptic curve $\mathcal{E}_{i\infty}^{\times}$. It's split Tate and we have 
$$(H^1(\mathcal{E}_{i\infty}^{\times}))^{\vee} \cong \QQ(0) \oplus \QQ(1).
$$
Denote the de Rham generators by $\drX,\drY$, where  $\drX$ spans the $\QQ(0)$ and $\drY$ the $\QQ(1)$ piece. Similarly to de Rham periods, $\mathcal{O}(S^{\dR}_{i\infty,\tau})$ is spanned, over $k$, by classes of triples, e.g.
$$
[ \mathcal{O}(S^{\dR}_{i\infty,\tau}), \nu, \mu]^{red},
$$
where $\nu$ is a word in $\{\drX,\drY\}$, and $\mu$ is a word, of the same length as $\nu$, in $\{\omega,\eta\}$ where these form a basis of $H^1_{\dR}(\mathcal{E}_{\tau})$. Recall that a path $\gamma_{i\infty,\tau}$ from $i\infty$ to $\tau$ gives a homomorphism which assigns to each such triple a complex number \eqref{reductive-path}. Let us look more closely at how it acts. Consider the map
\begin{equation}
\label{reductive-pairing}
\mathcal{V}_{n,\tau} \otimes \CC \xrightarrow{\sim} Sym^nH^1_{\Beta}(\mathcal{E}_{\tau};\QQ) \otimes \CC \xrightarrow{\sim} Sym^nH^1_{\Beta}(\mathcal{E}_{i \infty};\QQ) \otimes \CC \xrightarrow{\sim} \mathcal{V}_{n,i\infty} \otimes \CC,
\end{equation}
where the first arrow is induced by the comparison isomorphism 
for algebraic de Rham and Betti cohomology $H^1_{\dR}(\mathcal{E}_{\tau}) \otimes \CC \cong H^1_{\Beta}(\mathcal{E}_{\tau};\QQ) \otimes \CC$, the middle arrow is an isomorphism induced by the pullback along $\gamma_{i\infty,\tau}$, and the last arrow is the inverse of the comparison isomorphism for the fiber at $i\infty$. Denote the composition by $\gamma_{i\infty,\tau}^{comp}$. It's matrix representation is $P_{i\infty}^{-1}P_{\gamma_{i\infty,\tau}^*}P_{\tau}$ where $P_{\tau}$ is period matrix of $\mathcal{E}_{\tau}$, $P_{i\infty}^{-1}$ is the inverse of the matrix of period matrix of the infinitesimal Tate curve
$$
P_{i\infty}^{-1} = \begin{pmatrix}
1 & 0 \\
0 & (2\pi i)^{-1}
\end{pmatrix} \, ,
$$
and $P_{\gamma_{i\infty,\tau}^*}$ is the matrix corresponding to the middle isomorphism above.
\begin{ex}
Let $[\omega_{\tau}]$ be as in the previous example. Then we have
$$
\gamma^{red}_{\vec{1}_{i\infty},\tau}([ \mathcal{O}(S^{\dR}_{i\infty,\tau}), \drY, [\omega_{\tau}]]^{red}) = \frac{ \omega_1}{2\pi i}.
$$
Similarly,
$$
\gamma^{red}_{\vec{1}_{i\infty},\tau}([ \mathcal{O}(S^{\dR}_{i\infty,\tau}), \drX, [\omega_{\tau}]]^{red}) = \omega_2,
$$
where $\omega_1,\omega_2$ are the two periods of the elliptic curve $\mathcal{E}_{\tau}$.
\end{ex}

We also note that the ring $\mathcal{O}(S^{\dR}_{i\infty,\tau})$ is an object of the underlying category $\mathcal{H}(k)$, and hence is equipped with Hodge and weight filtrations. In what follows we shall drop the affine ring from the notation $[ \mathcal{O}(S^{\dR}_{i\infty,\tau}), \nu, \mu]^{red}$ and simply write $[\nu, \mu]^{red}$ where it is unambiguous what it refers to.
\end{subsection}
\begin{subsection}{Motivic periods which can be expressed in terms of motivic Eichler integrals}
Let $f: \mathcal{E} \rightarrow X_{\Gamma}$, $\mathbb{V}_n$ and  $\mathcal{V}_n$ be as above -- the universal family over $X_{\Gamma}$, the nth symmetric power of the relative Betti cohomology of $\mathcal{E}$ and the nth symmetric power of the relative algebraic de Rham cohomology of $\mathcal{E}$ respectively. For $\mathcal{V} = (\mathbb{V}_{\Beta},\mathcal{V}_{\dR},c) \in \mathcal{H}(X_{\Gamma})$ define $\mathcal{P}^{\fm}_{\mathcal{V}_x}$ to be the span over $k$ of $[\mathcal{V},\sigma,\omega]^{\fm}$ where $\sigma \in \mathbb{V}_{\Beta,x}^{\vee}$ and $\omega \in \mathcal{V}_{\dR,x}$, i.e. the periods of the `fiber at $x$'.
    \begin{thm}
    \label{motivic theorem}
     Let $\mathcal{V} \in \mathcal{H}(X_{\Gamma})$ be such that the local system $\mathbb{V}_{\Beta}$ has a filtration
    $$
    0 \subset \mathbb{V}_{\Beta}^1 \subset \mathbb{V}_{\Beta}^2 \subset \cdots \subset \mathbb{V}_{\Beta}^k = \mathbb{V}_{\Beta}, 
    $$
    such that 
    $$
    \mathbb{V}_{\Beta}^i/\mathbb{V}_{\Beta}^{i-1} \in \mathbb{V}_n(r),
    $$
    where $n,r \in \ZZ$ for all $0<i\leq n$. Furthermore we assume that $\mathcal{V}_{\dR}$ has an analogous filtration such that $\mathcal{V}_{\dR}^i/\mathcal{V}_{\dR}^{i-1} \in \mathcal{V}_n$. Then
    $$
        \mathcal{P}^{\fm}_{\mathcal{V}_y} \subset \mathcal{P}^{\fm}_{\mathcal{V}_x} \otimes \mathcal{O}(\pi_1^{\textrm{rel}}(X_{\Gamma},x,y)).
    $$
\end{thm}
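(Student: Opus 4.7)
The approach is via Tannakian duality applied to the category $\mathcal{W}(\mathcal{H}(X_{\Gamma}),\mathcal{S})$ of \S\ref{tannakian definition}--\S\ref{B-dR-completion}, where $\mathcal{S}$ is the semi-simple subcategory of $\mathcal{H}(X_{\Gamma})$ generated by $H^1(\mathcal{E}/X_{\Gamma})$. My first task is to locate $\mathcal{V}$ inside this subcategory. Since $\QQ(1)\cong\det(H^1(\mathcal{E}/X_{\Gamma}))^{\vee}$ already belongs to $\mathcal{S}$, so does every $\mathrm{Sym}^n H^1(\mathcal{E}/X_{\Gamma})(r)$; the hypothesised filtration then exhibits $\mathcal{V}$ as an iterated extension of such objects, so $\mathcal{V}\in\mathcal{W}(\mathcal{H}(X_{\Gamma}),\mathcal{S})$.

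By definition $\pi_1^{\textrm{rel}}(X_{\Gamma},x,y)=\mathrm{Isom}^{\otimes}(\omega_x,\omega_y)$ acts tautologically on every object of $\mathcal{W}(\mathcal{H}(X_{\Gamma}),\mathcal{S})$. This produces, in each realisation, a universal coaction
\[
\omega_y^{\bullet}(\mathcal{V}) \;\longrightarrow\; \omega_x^{\bullet}(\mathcal{V})\otimes\mathcal{O}(\pi_1^{\bullet,\textrm{rel}}(X_{\Gamma},x,y)),\qquad \bullet\in\{\Beta,\dR\},
\]
compatible with $c$ via the Riemann--Hilbert isomorphism \eqref{grp-comparison}. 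On the de Rham side this encodes parallel transport of sections of $\mathcal{V}_{\dR}$ along the universal de Rham path using the Gauss--Manin connection; on the Betti side it encodes the universal monodromy of $\mathbb{V}_{\Beta}$.

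Next, fix bases $\{e_i\}$ of $\mathcal{V}_{\dR,x}$ and $\{e_j^{\vee}\}$ of $\mathbb{V}_{\Beta,x}^{\vee}$, and apply the two universal coactions to $\omega_y\in\mathcal{V}_{\dR,y}$ and $\sigma_y\in\mathbb{V}_{\Beta,y}^{\vee}$ to obtain
\[
\omega_y=\sum_i e_i\otimes d_i,\qquad \sigma_y=\sum_j e_j^{\vee}\otimes b_j,
\]
with $d_i\in\mathcal{O}(\pi_1^{\dR,\textrm{rel}}(X_{\Gamma},x,y))$ and $b_j\in\mathcal{O}(\pi_1^{\Beta,\textrm{rel}}(X_{\Gamma},x,y))$. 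Multilinearity of the motivic period bracket in $(\sigma,\omega)$, combined with the fact that these two decompositions correspond to one another under $c$, then yields
\[
[\mathcal{V}_y,\sigma_y,\omega_y]^{\fm} \;=\; \sum_{i,j} [\mathcal{V}_x,e_j^{\vee},e_i]^{\fm}\otimes b_j d_i,
\]
where each $b_j d_i$ represents a motivic period of $\mathcal{O}(\pi_1^{\textrm{rel}}(X_{\Gamma},x,y))$ in the sense of \eqref{unipotent-motivic-period} extended to the reductive factor via \eqref{reductive-path}. Summing over a spanning set of $(\sigma_y,\omega_y)$ gives the claimed inclusion.

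The main obstacle is ensuring that the Betti and de Rham universal coactions match under $c$, so that the pairs $(b_j,d_i)$ assemble into a single motivic object rather than just a Betti and a de Rham piece. This is precisely the content of Riemann--Hilbert for $\mathcal{W}$ recorded in \eqref{grp-comparison}, together with the fact that $\mathcal{O}(\pi_1^{\Beta,\mathcal{S}}(X_{\Gamma},x,\bullet))$ carries an admissible variation of mixed Hodge structure \cite[Theorem 7.18]{Hain2}, which is what allows $\mathcal{O}(\pi_1^{\textrm{rel}}(X_{\Gamma},x,y))$ to be treated as an object of $\mathcal{H}(k)$ when $y$ moves (cf.\ remark \ref{families-rel-compl}). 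A secondary check is that the argument extends to tangential basepoints, in particular to $\vec{1}_{i\infty}$, using the framework of \cite{DelGFD} recalled in \S\ref{B-dR-completion}; this will be important for the subsequent application to the sunrise.
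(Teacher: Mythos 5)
Your proposal is correct and follows essentially the same Tannakian route as the paper: both observe that the hypothesised filtration places $\mathcal{V}$ in the category $\mathcal{W}(\cdot,\mathcal{S})$ of \S\ref{tannakian definition}, so $\pi_1^{\textrm{rel}}(X_{\Gamma},x,y)$ acts on the fibers, and the dual coaction $\rho:\mathcal{V}_y\to\mathcal{V}_x\otimes\mathcal{O}(\pi_1^{\textrm{rel}}(X_{\Gamma},x,y))$ is a morphism in $\mathcal{H}(k)$ by Hain's theorem on mixed Hodge structures, which immediately factors the motivic period. The only presentational difference is that you expand $\sigma_y$ and $\omega_y$ in bases (which tacitly requires a transposed coaction $\mathbb{V}_{\Beta,y}^{\vee}\to\mathbb{V}_{\Beta,x}^{\vee}\otimes\mathcal{O}^{\Beta}$ on Betti duals that you do not spell out), whereas the paper avoids this by choosing a single topological path $\gamma$, using its image $\gamma^{\Beta}$ under \eqref{Betti-path} so that $\rho_{\Beta}^{\vee}([\gamma^{\Beta}]\otimes[\sigma_x])=[\sigma_y]$, and setting $\rho_{\dR}(\omega_y)=\omega_x\otimes\xi$ to arrive at the clean product $[\mathcal{V}_y,\sigma_y,\omega_y]^{\fm}=[\mathcal{V}_x,\sigma_x,\omega_x]^{\fm}\otimes[\mathcal{O}(\pi_1^{\textrm{rel}}(X_\Gamma,x,y)),\gamma^{\Beta},\xi]^{\fm}$.
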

\begin{proof}
     For a Tannakian category $\mathcal{C}$ with fiber functors $\omega_x,\omega_y$ a $k$-point of $ \textrm{Isom}^{\otimes}_{\mathcal{C}}(\omega_x,\omega_y)$ gives an isomorphism $\omega_x(M) \otimes k \xrightarrow{\sim} \omega_y(M) \otimes k$, for any $M \in \textrm{Ob}(\mathcal{C})$. By assumption of the theorem we have that $\mathbb{V}_{\Beta} \in \mathcal{W}(\mathcal{L}_{X_{\Gamma},S^{\Beta}})$, as defined in \ref{tannakian definition}. Hence each $g \in \pi_1^{\textrm{rel},\Beta}(X_{\Gamma},x,y)(k)$ induces an isomorphism $\mathbb{V}_{\Beta,x} \xrightarrow[g]{\sim} \mathbb{V}_{\Beta,y}$. Similarly, $\mathcal{V}_{\dR} \in \mathcal{W}(\mathcal{A}_{X_{\Gamma},S^{\dR}})$, and we have for each element of $\pi_1^{\textrm{rel},\dR}(X_{\Gamma},x,y)(k)$ an isomorphism of $\mathcal{V}_{\dR,x}$ and $\mathcal{V}_{\dR,y}$. Putting this together we get an action
    $$
    \pi_1^{\textrm{rel}}(X_{\Gamma},x,y) \times (\mathcal{V}_{x}) \rightarrow (\mathcal{V}_{y}).
    $$
    Dual to the action above we have the coaction
    $$
    \rho : \mathcal{V}_{y} \rightarrow \mathcal{V}_{x} \otimes_k \mathcal{O}(\pi_1^{\textrm{rel}}(X_{\Gamma},x,y)).
    $$
    This is a morphism of mixed Hodge structures \cite[Theorem 8.1]{Hain2}, hence a morphism in $\mathcal{H}(k)$. 
    
    Let $[\mathcal{V}_y,\sigma_y,\omega_y]^{\fm} \in \mathcal{P}^{\fm}_{\mathcal{V}_y}$. The topological fundamental group acts on the dual of the Betti realization of $\mathcal{V}$, i.e. we have
    $$
    \pi_1(X_{\Gamma},x,y) \times \mathbb{V}_{\Beta,x}^{\vee} \rightarrow \mathbb{V}_{\Beta,y}^{\vee}.
    $$
    In particular the class of a path $\gamma$ from $x$ to $y$ sends $[\sigma_x]$ to $[\sigma_{y}]$, where $[\sigma_{x}]$ is the class obtained by continuing $[\sigma_y]$ along the path $\gamma^{-1}$. The image of this relation under \eqref{Betti-path} is
    $$
    \rho_{\Beta}^{\vee}\left([\gamma^{\Beta}] \otimes [\sigma_x]\right) = [\sigma_y].
    $$
    Let $\omega_x \otimes \xi \in \mathcal{V}_{\dR,x} \otimes_k \mathcal{O}(\pi_1^{\dR,\mathcal{S}}(X_{\Gamma},x,y))$ be such that $\rho_{\dR}(\omega_y) = \omega_x \otimes \xi$. It follows that
    $$
    [\mathcal{V}_y,\sigma_y,\omega_y]^{\fm} = [\mathcal{V}_x,\sigma_x,\omega_x]^{\fm} \otimes [\mathcal{O}(\pi_1^{\textrm{rel}}(X_{\Gamma},x,y)),\gamma^{\Beta},\xi]^{\fm}.
    $$

\end{proof}
\end{subsection}
\begin{subsection}{The sunrise and the relative completion of $\Gamma_1(6)$}
The previous theorem implies that periods of an object in $\mathcal{H}(X_1(6))$ satisfying the assumptions can be expressed as products of iterated Eichler integrals from $x$ to $y$ and periods of the special fiber at $x$. We will now see that the sunrise can be easily related to such an object. It is known that in the equal-mass case the elliptic curve given by the vanishing locus of the second Symanzik polynomial of the sunrise is the universal family $\mathcal{E} \rightarrow X_1(6)$ -- see \cite{BV}.
\begin{cor}
 The motivic Feynman amplitude of the sunrise in the equal-mass case is a motivic period of an object $E \in \mathcal{H}(X_1(6))$ satisfying the conditions of the previous theorem. The sunrise Feynman integral can therefore be written as a $k$-linear combination of products of Eichler integrals and periods of a special fiber of $E$.
\end{cor}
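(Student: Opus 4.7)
The plan is to exhibit the object $E \in \mathcal{H}(X_1(6))$ explicitly, verify the hypotheses of Theorem \ref{motivic theorem}, and then unpack the resulting inclusion of motivic period rings.

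First I would recall from \S\ref{equal masses} that in the equal-mass case the splitting of the right-hand column of diagram \eqref{diagram-equal-masses} allows one to pull back the horizontal sequence to obtain a rank-$3$ object $E \in \mathcal{H}(S')$ sitting in
\begin{equation*}
0 \to \QQ(0) \to E \to H^1(\mathcal{E})(-1) \to 0,
\end{equation*}
where $S'$ is the equal-mass locus and such that $I_G^{\fm}$ is a motivic period of $E$. By \cite[\S4]{BV} the family of elliptic curves cut out by the second Symanzik polynomial over $S'$ is pulled back from the universal family $\mathcal{E} \to X_1(6)$, so $E$ comes from an object of $\mathcal{H}(X_1(6))$ via the map $S' \to X_1(6)$.

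Next I would verify the hypotheses of Theorem \ref{motivic theorem}. The filtration $0 \subset \QQ(0) \subset E$ has graded pieces
\begin{equation*}
\QQ(0) \in \Sym^0 H^1(\mathcal{E}/X_1(6)) = \mathbb{V}_0, \qquad H^1(\mathcal{E})(-1) = \mathbb{V}_1(-1),
\end{equation*}
which are of the form $\mathbb{V}_n(r)$ required by the theorem; the analogous de Rham filtration exists by compatibility with the comparison. Applying the theorem with $x = \vec{1}_{i\infty}$ and $y = \tau$ (the point of $X_1(6)$ corresponding to the equal-mass configuration) then yields
\begin{equation*}
\mathcal{P}^{\fm}_{E_\tau} \subset \mathcal{P}^{\fm}_{E_{\vec{1}_{i\infty}}} \otimes \mathcal{O}(\pi_1^{\textrm{rel}}(X_1(6),\vec{1}_{i\infty},\tau)),
\end{equation*}
and since $I_G^{\fm} \in \mathcal{P}^{\fm}_{E_\tau}$, the first assertion of the corollary follows. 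To obtain the second assertion I would unpack the right-hand side using the splittings of \S\ref{motivic periods of rel compl}: motivic periods of $\mathcal{O}(\pi_1^{\textrm{rel}}(X_1(6),\vec{1}_{i\infty},\tau))$ split into a unipotent factor, whose periods by \eqref{unipotent-motivic-period} and the Eichler--Shimura description \eqref{Hodge-structure-mod-forms} are iterated Eichler integrals of modular forms on $\Gamma_1(6)$, and a reductive factor, whose periods by \eqref{reductive-pairing} are products of $\omega_1,\omega_2$ and powers of $2\pi i$. Combined with the periods of $E_{\vec{1}_{i\infty}}$ this yields the stated expression.

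The main obstacle will be the tangential base point at the cusp: the fiber $E_{\vec{1}_{i\infty}}$ requires interpreting $E$ at the nodal-cubic degeneration $i\infty$, and one must check that the extension defining $E$ extends across the cusp. I expect this to follow from the fact that $H^1(\mathcal{E}_{i\infty}^\times)$ is split Tate together with the admissibility of the underlying variation of mixed Hodge structures, but it is the step that requires the most care. A secondary subtlety is ensuring that $E$ genuinely descends to $X_1(6)$ rather than merely being defined on $S'$; this should follow from the modular interpretation of the sunrise elliptic curve, but one must check compatibility with the $\Gamma_1(6)$-monodromy.
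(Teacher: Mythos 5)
Your proposal follows essentially the same route as the paper's own proof: identify the rank-three extension $E$ from \S\ref{equal masses}, base-change it to $X_1(6)$ via the modular parametrization of the equal-mass sunrise elliptic curve, observe the filtration $0 \subset \QQ(0) \subset E$ has graded pieces $\mathbb{V}_0$ and $\mathbb{V}_1(-1)$, and invoke Theorem \ref{motivic theorem} with base points $\vec{1}_{i\infty}$ and $\tau$. The subtlety you flag about the behavior of $E$ at the tangential base point at the cusp is a fair one that the paper's proof does not explicitly resolve either; it is implicitly carried by the admissibility of the variation and the regular-singularity hypothesis built into the category $\mathcal{A}_X$.
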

\begin{proof}
Let $t = \frac{q_1^2}{m^2}$, and define $J_G^{\fm}(t) = m^2I^m_G(m,q)$. We may choose a basis of $H^1_{\dR}(\mathcal{E})$, and a basis of $H_1(\mathcal{E};\QQ)$. This fixes a choice of periods of the elliptic curve $\mathcal{E}$ denoted $\omega_1,\omega_2$. We can then write $t$ as a modular function $g(\tau)$, where $\tau = \frac{\omega_1}{\omega_2}$. Define $J_G^{\fm}(\tau) = [mot_G(\tau),\sigma_{G,\tau},\omega_{G,\tau}]^{\fm} = g^*J_G^{\fm}(t)$ to be the pull-back of the motivic period $J_G^{\fm}$ over $X_1(6)$. We can pull back the diagram \eqref{diagram-equal-masses} via $g$ to $X_1(6)$, and we denote $i : H^1(\mathcal{E})(-1)_{X_1(6)} \rightarrow H^2(P^G \setminus \mathcal{E})_{X_1(6)}$ the morphism which splits the residue. We can pull back $mot_G(\tau)$ via $i$ to an extension
\begin{equation}
\label{extension-E}
0 \rightarrow \QQ(0)_{/X_1(6)} \rightarrow E_{/X_1(6)} \rightarrow H^1(\mathcal{E})(-1)_{/X_1(6)} \rightarrow 0.
\end{equation}
We have $J_G^{\fm}(\tau) = [E_{/X_1(6)}, [\sigma_{G,\tau}],[res(\omega_{G,\tau})]]^{\fm}$. Furthermore, the object $E_{/X_1(6)} \in \textrm{Ob}(\mathcal{H}(X_1(6)))$ is equipped with a filtration
$$
0 \subset \QQ(0)_{/X_1(6)}\subset E_{/X_1(6)}
$$
such that its graded quotient is $H^1(\mathcal{E})(-1)_{/X_1(6)}$. Hence 
$$
(E_{/X_1(6)})_{\Beta} \in \textrm{Ob}(\mathcal{W}(\mathcal{L}_{X_1(6)},S^B)), \quad (E_{/X_1(6)})_{\dR} \in \textrm{Ob}(\mathcal{W}(\mathcal{A}_{X_1(6)},S^{\dR})),
$$
as required, and the previous theorem applies.
\end{proof}
\end{subsection}

\begin{rmk}
To sharpen this result one could use the Hodge and weight filtrations, i.e. the fact that $[\pi^*_G\omega_G] \in F^2W_3(mot_G)_{\dR}$, and the fact that the above morphism $\rho$ respects them as it is a morphism in $\mathcal{H}(X_1(6))$. The mixed Hodge theory of $\mathcal{O}(\pi_1^{\textrm{rel}}(X_{\Gamma},x,y))$ is known by \cite{Hain2}, and one could use it to set up an ansatz for $J^m(\tau)$ in terms of Eichler integrals, providing an alternative strategy to writing the sunrise in this form to those given in \cite{BV,AW}. Note that the method here would apply to any Feynman graph the motive of which satisfies the assumptions of theorem \ref{motivic theorem}.
\end{rmk}

\appendix
\begin{section}{Defining single-valued periods over a base}
\label{single-valued-periods definition}
We enrich the category $\mathcal{H}(S)$ with an isomorphism of local systems
$$
F_{\infty} : \mathbb{V}_{\textrm{B}} \xrightarrow{\sim} \sigma^{*}\mathbb{V}_{\textrm{B}}
$$
where $\sigma:S(\CC) \xrightarrow{\sim} S(\CC)$ is induced by complex conjugation, such that the following diagram commutes 
\begin{equation}
\xymatrix{
            \mathcal{V}_{\dR} \otimes_{\mathcal{O}_S} \mathcal{O}_{S^{an}} \ar[d]^{id\otimes \rho} \ar[r]^{c}& \mathbb{V}_{\textrm{B}} \otimes_{\mathbb{Q}} \mathcal{O}_{S^{an}} \ar[d]^{F_{\infty}\otimes \rho}\\
            \mathcal{V}_{\dR} \otimes_{\mathcal{O}_{S}} \mathcal{O}_{\overline{S}^{an}} \ar[r]^{\overline{c}} & \sigma^*\mathbb{V}_{\textrm{B}} \otimes_{\mathbb{Q}} \mathcal{O}_{\overline{S}^{an}}
            }
\end{equation}
where $\mathcal{O}_{\overline{S}}$ is the sheaf of antiholomorphic functions on $S^{an}$, $\rho: f \mapsto \overline{f}$, and $\overline{c}$ is the pullback of the comparison isomorphism by $\sigma$.

The morphism $F_{\infty}$ induces an  isomorphism of rings of motivic periods $$F_{\infty} : \mathcal{P}^{\fm,X,Y}_{\mathcal{H}(S)} \xrightarrow{\sim}  \mathcal{P}^{\fm,\overline{X},Y}_{\mathcal{H}(\overline{S})},$$ where $\mathcal{H}(\overline{S})$ is the category defined in the same way as $\mathcal{H}(S)$ except for the comparison isomorphism
$$
\overline{c} : \mathcal{V}_{\dR} \otimes_{\mathcal{O}_{S}} \mathcal{O}_{\overline{S}^{an}} \xrightarrow{\sim} \mathbb{V}_{\textrm{B}} \otimes_{\mathbb{Q}} \mathcal{O}_{\overline{S}^{an}}.
$$
The composition of morphisms $\sigma^*F_{\infty}$ defines an isomorphism  $$\sigma^*F_{\infty} : \mathcal{P}^{\fm,X,Y}_{\mathcal{H}(S)} \xrightarrow{\sim}  \mathcal{P}^{\fm,X,Y}_{\mathcal{H}(\overline{S})},$$
which sends
$$
\left[\mathcal{V},\gamma,\omega \right] \mapsto \left[\overline{\mathcal{V}},\gamma\circ\sigma^*F_{\infty},\omega \right],
$$
where $\overline{\mathcal{V}} = \left(\sigma^*\mathbb{V}_{\textrm{B}},\mathcal{V}_{\dR},\overline{c} \right)$. Recall that the period map on $\mathcal{P}^{\fm,X,Y}_{\mathcal{H}(S)}$ takes values in $M_{X,Y}(S(\CC))$ which is the ring of multivalued meromorphic functions on $S(\CC)$ with a prescribed branch on $X$. Similarly the period map on $\mathcal{P}^{\fm,X,Y}_{\mathcal{H}(\overline{S})}$ takes values in $\overline{M}_{X,Y}(S(\CC))$ which is the ring of quotients of antiholomorphic functions on $S(\CC)$. We have $\textrm{per}(\sigma^*F_{\infty}\xi) = \overline{\textrm{per}(\xi)}$, where $\xi$ is a motivic period over $S$.

Denote by $\mathcal{P}$ the ring $\mathcal{P}^{\fm,X,Y}_{\mathcal{H}(S)} \otimes_{\mathbb{Q}} \mathcal{P}^{\fm,X,Y}_{\mathcal{H}(\overline{S})}$, and by $f$ the morphism $\sigma^*F_{\infty}$ composed with $\mathcal{P}^{\fm,X,Y}_{\mathcal{H}(\overline{S})} \rightarrow \mathcal{P}$ sending $x \mapsto 1 \otimes x$. Let $\iota$ be the morphism $\mathcal{P}^{\fm,X,Y}_{\mathcal{H}(S)} \rightarrow \mathcal{P}$ sending $x \mapsto x \otimes 1$. We have two $\mathcal{P}$-points
$$
\iota, f \in \textrm{Hom}\left(\mathcal{P}^{\fm,X,Y}_{\mathcal{H}(S)},\mathcal{P}\right) = \textrm{Isom}^{\otimes}_{\mathcal{H}(S)}(\omega_{\dR}^{Y},\omega_{\textrm{B}}^X)(\mathcal{P}),
$$
of the scheme $\textrm{Isom}^{\otimes}_{\mathcal{H}(S)}(\omega_{\dR}^{Y},\omega_{\textrm{B}}^X)$, which is a torsor over $\textrm{Aut}^{\otimes}_{\mathcal{H}(S)}(\omega_{\dR}^{Y})$, hence we have
$$
\textrm{Isom}^{\otimes}_{\mathcal{H}(S)}(\omega_{\dR}^{Y},\omega_{\textrm{B}}^X)(\mathcal{P}) \times \textrm{Aut}^{\otimes}_{\mathcal{H}(S)}(\omega_{\dR}^{Y})(\mathcal{P}) \rightarrow \textrm{Isom}^{\otimes}_{\mathcal{H}(S)}(\omega_{\dR}^{Y},\omega_{\textrm{B}}^X)(\mathcal{P}).
$$
Therefore there is a unique $\mathcal{P}$-point of $G^{\mathfrak{dr},Y}_{\mathcal{H}(S)}$, i.e. a morphism $\mathcal{P}^{\fdr,Y}_{\mathcal{H}(S)} \rightarrow \mathcal{P}$, which we denote by $\textrm{s}^{\fm}$ and which satisfies
$$
f \circ \textrm{s}^{\fm} = \iota.
$$
We call it the \textit{single-valued morphism}.

The morphism $\textrm{s}^{\fm}$ is computed for each object $\mathcal{V} \in \textrm{Ob}(\mathcal{H}(S))$ by the composition
$$
\omega_{\dR}^{Y}(\mathcal{V}) \otimes \mathcal{P} \xrightarrow{c^{\fm}_{\mathcal{V}}} \omega_{\textrm{B}}^{X}(\mathcal{V}) \otimes \mathcal{P} \xrightarrow{f^{\infty}_{\mathcal{V}}}\omega_{\textrm{B}}^{X}(\mathcal{V}) \otimes \mathcal{P} \xrightarrow{(c^{\fm}_{\mathcal{V}})^{-1}} \omega_{\dR}^{Y}(\mathcal{V}) \otimes \mathcal{P},
$$
where $c^{\fm}_{\mathcal{V}}$ is the extension of scalars of the comparison isomorphism $c$ to $\mathcal{P}$, and the morpshim $f^{\infty}_{\mathcal{V}}$ sends $x \otimes y \otimes 1 \mapsto x \otimes 1 \otimes \sigma^*F_{\infty}y$, where $x \in \omega_{\dR}^{Y}(\mathcal{V})$, $y \in \mathcal{P}^{\fm,X,Y}_{\mathcal{H}(S)}$, $\sigma^*F_{\infty}y \in \mathcal{P}^{\fm,X,Y}_{\mathcal{H}(\overline{S})}$. If $C^{\fm}_{\mathcal{V}}$ is the \textit{universal period matrix}, i.e. the matrix representing $c^{\fm}_{\mathcal{V}}$, then we have $\textrm{s}^{\fm}_{\mathcal{V}}$ is represented by
$$
 \left(\sigma^*F_{\infty}C^{\fm}_{\mathcal{V}}\right)^{-1}C^{\fm}_{\mathcal{V}}.
$$
Applying the period morphism, and defining $\textrm{s} = \textrm{per}(\textrm{s}^{\fm})$, we get that $\textrm{s}$ is represented by
\begin{equation}
\label{single-valued-matrix definition}
\textrm{s}_{\mathcal{V}} = \left(\overline{C_{\mathcal{V}}}\right)^{-1}C_{\mathcal{V}},
\end{equation}
where $C_{\mathcal{V}}$ is the matrix representing the comparison isomorphism for the object $\mathcal{V} \in \textrm{Ob}(\mathcal{H}(S))$. Note that this map is indeed invariant under the change of Betti basis, as this amounts to replacing $C_{\mathcal{V}}$ with $PC_{\mathcal{V}}$ for some $P \in \textrm{GL}(\omega^X_{\textrm{B}}(\mathcal{V});\QQ)$, and complex conjugation acts trivially on the coefficients of $P$ as they are rational.

\begin{ex}
We will look at the motivic logarithm. Let $\mathcal{V} \in \textrm{Ob}(\mathcal{H}(S))$, for $S = \Pp^1 \setminus \{0,1,\infty\}$, be the object whose fibers are 
$$
V_x = (H^1_{\Beta}(\mathbb{G}_m,\{1,x\}),H^1_{\dR}(\mathbb{G}_m,\{1,x\}),c) \in \mathcal{H}(\Sp(\QQ)), \text{ for } x \in S(\QQ).
$$
We can write down the universal period matrix associated to $\mathcal{V} \in \textrm{Ob}(\mathcal{H}(S))$:
\begin{equation}
c^{\fm}_{\mathcal{V}} = 
\begin{pmatrix}
\mathbb{L}^{\fm} &  0\\
\log^{\fm}(x) & 1
\end{pmatrix}
\end{equation}
Computing $(\overline{c^{\fm}_{\mathcal{V}}})^{-1}c^{\fm}_{\mathcal{V}}$ we get
$$
\textrm{s}^{\fm}(\log^{\fdr}(x)) = \log^{\fm}(x) + \overline{\log^{\fm}}(x).
$$
Applying the period map to the right hand side we get $2\log|x|$. More generally one can define motivic multiple polylogarithms and their de Rham versions as motivic and de Rham periods of the motivic fundamental groupoid of $\mathbb{P}^1\setminus \{0,1,\infty\}$ (see \cite[\S 10.6.2]{Brown1}). An analogous computation to the one for the logarithm then leads to
$$
\textrm{s}^{\fm}(\textrm{Li}_2^{\fdr}(x)) = \textrm{Li}_2^{\fm}(x) - \overline{\textrm{Li}_2^{\fm}}(x) + (\log^{\fm}(x) + \overline{\log^{\fm}}(x))\overline{\textrm{Li}_1^{\fm}}(x) .
$$
The image of the right hand side under the period homomorphism is $2i$ times the Bloch-Wigner dilogarithm.
\end{ex}
\begin{rmk}
For some general results on the single-valued pairing (over a fixed subfield of $\CC$) see \cite{BD}. In its sequel \cite{BD2} the authors use these results to prove a conjecture of Stieberger that relates open and closed string amplitudes at tree level via the single-valued morphism, as well as to show that the KLT formula expressing closed string amplitudes as quadratic expressions in open string amplitudes follows from a variant of the single-valued formalism for cohomology with coefficients in a local system.
\end{rmk}
\end{section}

\appendix
\begin{section}{Residues of Feynman integrands of the sunrise with subdivided edges}
\label{Residues of integrands}

In proposition \ref{basis-elliptic-dr} we consider a basis 
$$
\{[df_1],\ldots,[df_5],[res(\eta_G)],[res(\omega_G)]\} \text{ of } H^1_{\dR}(\mathcal{E}, D \cap \mathcal{E}) \, ,
$$
and we write the residues of forms $\nu_1,\nu_2,\nu_3$, defined in the statement of the proposition, in this basis. The following are the coefficients for each residue form.

\paragraph{Coefficients of $res(\nu_1)$ :}
\begin{equation*}
\begin{split}
    a_{1,1} &= \frac{1}{2d_1}((m_1^2 - m_3^2)(m_1^2 - m_2^2 + m_3^2 + q_1^2)),\\
    a_{1,2}&= \frac{-1}{d_1}((m_1^2 - m_2^2 + m_3^2 + q_1^2)(-m_1^2 + m_2^2 + m_3^2 + q_1^2)),\\
    a_{1,3}&=\frac{-1}{2m_2^2q_1^2}, \quad a_{1,4} = a_{1,1} - a_{1,3}, \quad a_{1,5} = a_{1,2} + a_{1,3}, \\
    a_{1,6} &=\frac{-1}{d_1}m_1^2(m_1^6 - 3m_1^4m_2^2 - 3m_1^4m_3^2 + 3m_1^4q_1^2 + 3m_1^2m_2^4 + 2m_1^2m_2^2m_3^2 - 2m_1^2m_2^2q_1^2 + \\
    &+3m_1^2m_3^4 - 2m_1^2m_3^2q_1^2 + 3m_1^2q_1^4 - m_2^6 + m_2^4m_3^2 - m_2^4q_1^2 + m_2^2m_3^4 + 10m_2^2m_3^2q_1^2 + \\
    &+ m_2^2q_1^4 - m_3^6 - m_3^4q_1^2 + m_3^2q_1^4 + q_1^6),\\
    a_{1,7} &=\frac{1}{d_1}(m_1^6 - 2m_1^4m_2^2 - 2m_1^4m_3^2 + 2m_1^4q_1^2 + m_1^2m_2^4 + 2m_1^2m_2^2m_3^2 - 2m_1^2m_2^2q_1^2 + \\
    & +m_1^2m_3^4 - 2m_1^2m_3^2q_1^2 + m_1^2q_1^4 + 4m_2^2m_3^2q_1^2), \text{ where }\\
    d_1 &= 2m_2^2q_1^2(3m_1^4 - 2m_1^2m_2^2 - 2m_1^2m_3^2 + 2m_1^2q_1^2 - m_2^4 + 2m_2^2m_3^2 - 2m_2^2q_1^2 - m_3^4 \\ 
    & - 2m_3^2q_1^2 - q_1^4)
\end{split}
\end{equation*}

\paragraph{Coefficients of $res(\nu_2)$ :}
\begin{equation*}
\begin{split}
    a_{2,1} &= \frac{1}{2d_2}(2m_1^2 - 2m_3^2) \\
    a_{2,2}&= \frac{1}{m_1^2d_2}(-m_1^4 - 4m_1^2q_1^2 + m_2^4 - 2m_2^2m_3^2 + 2m_2^2q_1^2 + m_3^4 + 2m_3^2q_1^2 + q_1^4)\\
    a_{2,3}&=\frac{-1}{2m_1^2q_1^2}, \quad a_{2,4} = a_{2,1} - a_{2,3}, \quad a_{2,5} = a_{2,2} + a_{2,3}, \\
    a_{2,6} &=\frac{1}{d_2}(m_1^6 - m_1^4m_2^2 - 3m_1^4m_3^2 + m_1^4q_1^2 - m_1^2m_2^4 - 2m_1^2m_2^2m_3^2 - 10m_1^2m_2^2q_1^2 + \\
    & + 3m_1^2m_3^4 + 2m_1^2m_3^2q_1^2 - m_1^2q_1^4 + m_2^6 - 3m_2^4m_3^2 + m_2^4q_1^2 + 3m_2^2m_3^4 + 2m_2^2m_3^2q_1^2 - \\
    & - m_2^2q_1^4 - m_3^6 - 3m_3^4q_1^2 - 3m_3^2q_1^4 - q_1^6)\\
    a_{2,7} &=\frac{-1}{d_2}(m_1^4 - 2m_1^2m_3^2 - m_2^4 - 2m_2^2q_1^2 + m_3^4 - q_1^4), \text{ where } \\
    d_2 & = 2q_1^2(3m_1^4 - 2m_1^2m_2^2 - 2m_1^2m_3^2 + 2m_1^2q_1^2 - m_2^4 + 2m_2^2m_3^2 - 2m_2^2q_1^2 - m_3^4 - 2m_3^2q_1^2 - q_1^4)
\end{split}
\end{equation*}

\paragraph{Coefficients of $res(\nu_3)$ :}
\begin{equation*}
\begin{split}
    a_{3,1} &= \frac{1}{2d_3}(2m_1^4 - m_1^2m_2^2 - m_1^2m_3^2 + 3m_1^2q_1^2 - m_2^4 + 2m_2^2m_3^2 - 2m_2^2q_1^2 - m_3^4 - 2m_3^2q_1^2 - q_1^4)\\
    a_{3,2}&= \frac{1}{d_3}(-m_1^4 + 2m_1^2m_2^2 - 2m_1^2m_3^2 + 2m_1^2q_1^2 - m_2^4 + 2m_2^2m_3^2 - 2m_2^2q_1^2 - m_3^4 - 2m_3^2q_1^2 - q_1^4)\\
    a_{3,3}&=\frac{-1}{2m_3^2q_1^2}, \quad a_{3,4} = a_{3,1} - a_{3,3}, \quad a_{3,5} = a_{3,2} + a_{3,3}, \\
    a_{3,6} &=\frac{1}{d_3}(m_1^2(m_1^6 - 3m_1^4m_2^2 - m_1^4m_3^2 + m_1^4q_1^2 + 3m_1^2m_2^4 - 2m_1^2m_2^2m_3^2 + 2m_1^2m_2^2q_1^2 - \\
    & - m_1^2m_3^4 - 10m_1^2m_3^2q_1^2 - m_1^2q_1^4 - m_2^6 + 3m_2^4m_3^2 - 3m_2^4q_1^2 - \\
    & - 3m_2^2m_3^4 + 2m_2^2m_3^2q_1^2 - 3m_2^2q_1^4 + m_3^6 + m_3^4q_1^2 - m_3^2q_1^4 - q_1^6))\\
    a_{3,7} &=\frac{-1}{d_3}(m_1^2(m_1^4 - 2m_1^2m_2^2 + m_2^4 - m_3^4 - 2m_3^2q_1^2 - q_1^4)), \text{ where } \\
    d_3 &= 2m_3^2q_1^2(3m_1^4 - 2m_1^2m_2^2 - 2m_1^2m_3^2 + 2m_1^2q_1^2 - m_2^4 + 2m_2^2m_3^2 - 2m_2^2q_1^2 - m_3^4 \\
    & - 2m_3^2q_1^2 - q_1^4)
\end{split}
\end{equation*}
\end{section}

\bibliographystyle{abbrv}
\bibliography{bibliography}
\end{document}